\documentclass[11pt, a4paper]{article}
\usepackage[hidelinks]{hyperref}
\usepackage[total={16.2cm,25cm}]{geometry}
\usepackage{amsmath}
\usepackage{amssymb}
\usepackage{enumerate}
\usepackage{amsthm}
\usepackage{mathtools}
\usepackage{graphicx}
\usepackage[utf8]{inputenc}
\usepackage{xcolor}

\theoremstyle{plain}
\newtheorem{theorem}{Theorem}
\newtheorem{corollary}[theorem]{Corollary}
\newtheorem{proposition}[theorem]{Proposition}
\newtheorem{lemma}[theorem]{Lemma}

\theoremstyle{definition}
\newtheorem{definition}[theorem]{Definition}

\newtheorem{remark}[theorem]{Remark}

\newcommand{\rp}{\operatorname{rp}}
\newcommand{\lp}{\operatorname{lp}}
\newcommand{\sh}{\stackrel{\mathord{\sharp}}{\leq}}
  
\newcommand{\lsh}{\stackrel{\ell\mathord{\sharp}}{\leq}}
\newcommand{\rsh}{\stackrel{r\mathord{\sharp}}{\leq}}
\newcommand{\ls}{\stackrel{\ell\mathord{*}}{\preceq}}
\newcommand{\rs}{\stackrel{r\mathord{*}}{\preceq}}
\newcommand{\s}{\stackrel{\mathord{*}}{\preceq}}
\newcommand{\lslc}{\stackrel{\ell\mathord{*}\mid\ell \scalebox{0.47}{$\square$}}{\preceq}}
\newcommand{\lsc}{\stackrel{\ell\mathord{*}\mid \scalebox{0.47}{$\square$}}{\preceq}}
\newcommand{\lsrc}{\stackrel{\ell\mathord{*}\mid r\scalebox{0.47}{$\square$}}{\preceq}} 
\newcommand{\rslc}{\stackrel{r\mathord{*}\mid \ell \scalebox{0.47}{$\square$}}{\preceq}}
\newcommand{\rsc}{\stackrel{r\mathord{*}\mid \scalebox{0.47}{$\square$}}{\preceq}} 
\newcommand{\rsrc}{\stackrel{r\mathord{*}\mid r\scalebox{0.47}{$\square$}}{\preceq}}
\newcommand{\slc}{\stackrel{\mathord{*}\mid \ell \scalebox{0.47}{$\square$}}{\preceq}}
\newcommand{\scu}{\stackrel{\mathord{*}\mid \scalebox{0.47}{$\square$}}{\preceq}} 
\newcommand{\src}{\stackrel{\mathord{*}\mid r\scalebox{0.47}{$\square$}}{\preceq}}
\newcommand{\is}{\mathord{*}}
\newcommand{\ils}{\ell\mathord{*}}
\newcommand{\irs}{r\mathord{*}}
\newcommand{\ilslc}{\ell\mathord{*} \mid \ell \scalebox{0.47}{$\square$}}
\newcommand{\ilsrc}{\ell\mathord{*} \mid r \scalebox{0.47}{$\square$}}
\newcommand{\ilsc}{\ell\mathord{*} \mid \scalebox{0.60}{$\square$}}
\newcommand{\irslc}{r\mathord{*} \mid\ell \scalebox{0.47}{$\square$}}
\newcommand{\irsrc}{r\mathord{*} \mid r\scalebox{0.47}{$\square$}}
\newcommand{\irsc}{r\mathord{*} \mid \scalebox{0.60}{$\square$}}
\newcommand{\islc}{\mathord{*} \mid\ell \scalebox{0.47}{$\square$}}
\newcommand{\isrc}{\mathord{*} \mid r\scalebox{0.47}{$\square$}}
\newcommand{\isc}{\mathord{*} \mid \scalebox{0.60}{$\square$}}
\newcommand{\clslc}{\ell\mathord{*} \! \! \mid \! \! \ell \scalebox{0.60}{$\square$}}
\newcommand{\clsrc}{\ell\mathord{*} \! \! \mid \! \! r \scalebox{0.60}{$\square$}}
\newcommand{\clsc}{\ell\mathord{*} \! \! \mid \! \! \scalebox{0.60}{$\square$}}
\newcommand{\crslc}{r\mathord{*}  \!\! \mid \! \! \ell \scalebox{0.6}{$\square$}}
\newcommand{\crsrc}{r\mathord{*} \!\! \mid  \!\! r\scalebox{0.6}{$\square$}}
\newcommand{\crsc}{r\mathord{*} \! \! \mid \! \! \scalebox{0.60}{$\square$}}
\newcommand{\cslc}{\mathord{*} \! \! \mid \! \!\ell \scalebox{0.47}{$\square$}}
\newcommand{\csrc}{\mathord{*} \! \! \mid  \!\! r\scalebox{0.47}{$\square$}}
\newcommand{\cscu}{\mathord{*} \! \! \mid  \!\! \scalebox{0.60}{$\square$}}

\begin{document}

\title{New partial orders on Rickart *-rings}

\author{Cecilia Rossana Cimadamore\footnote{\texttt{crcima@criba.edu.ar}} \and 
 Laura Alicia Rueda \footnote{\texttt{laura.rueda@uns.edu.ar}} \and 
   Melina Vanina Verdecchia\footnote{\texttt{mverdec@uns.edu.ar}}
}

\date{}

\maketitle

\begin{center}
Departamento de Matemática, Universidad Nacional del Sur (UNS), Bahía Blanca, Argentina. 
\end{center}

\begin{abstract}
In this paper, we define and study partial orders on a Rickart *-ring obtained by imposing an additional condition on the star and the one-sided star partial orders. For each such order, we show that the down-set of any element is order-isomorphic to a suitable subset of self-adjoint idempotent elements. As an application, we characterize the elements which are below a given element with respect to each of the new orders. We further prove that the down-set of any element is a lattice whenever the ring is regular. We analyze the existence of supremum and infimum of pairs of elements in a regular Rickart *-ring and provide characterizations of these operations whenever they exist. Finally, we extend the latter results for a nonempty subset of elements for regular Baer *-rings.
\end{abstract}

AMS Classifications:  16W10, 06A06, 16E50, 16U90.

Keywords: Rickart *-ring, star order, one-sided star orders, lattice structure.
\section{Introduction}

An involution $*$ on a ring $\mathcal{A}$ is a unary operation satisfying $(a+b)^*= a^*+b^*, (ab)^*=b^*a^* $ and $(a^*)^*=a$, for all $a, b \in \mathcal{A}$. A ring with an involution is called a *-ring. For $a\in\mathcal{A}$, its right annihilator is $a^{\circ}=\{x\in \mathcal{A}: ax=0\}$ and its left annihilator is $\prescript{\circ}{}a=\{x\in \mathcal{A}: xa=0\}$. A Rickart ring is a ring such that for every $a\in \mathcal{A}$, there exist idempotents $p,q$ such that $\prescript{\circ}{}a= \mathcal{A}(1-p)$ and $a^\circ = (1-q) \mathcal{A}$. Every Rickart ring has a multiplicative identity. A *-ring is called a Rickart *-ring if for every $a \in \mathcal{A}$, there exists a self-adjoint idempotent $p$, that is an element satisfying $p^*= p = p^2$, such that $\prescript{\circ}{}a=\mathcal{A} (1-p) $. This self-adjoint idempotent is unique and is denoted by $\lp(a)$. The analogous property for right annihilators also holds when the ring is a Rickart *-ring. So, for every $a\in \mathcal{A}$, there exists a unique self-adjoint element, denoted by $\rp(a)$, such that $a^\circ = (1-\rp(a)) \mathcal{A}$. For further details on Rickart *-rings, see \cite{Be}. 

Recall that an element $a$ in a ring with identity $\mathcal{A}$ is called regular if there is an element $a^-$ (an inner generalized inverse of $a$) such that $aa^-a=a$. The ring $\mathcal{A}$ is said to be regular if every element of $\mathcal{A}$ is regular. An element $a$ in a *-ring $\mathcal{A}$ is Moore–Penrose invertible if there exists $x\in\mathcal{A}$ such that $axa=a$, $xax=x$, $(ax)^\ast=ax$ and $(xa)^\ast=xa$. Such an element $x$ is unique; it is denoted by $a^\dag$ and is called the Moore–Penrose inverse of $a$. It is known that, in a Rickart *-ring, $a$ is regular if and only if $a^\dag$ exists \cite[Corollary 2.13]{RaDiDj}. Moreover, $\lp(a)= aa^\dag$ and $\rp(a)= a^\dag a$ when $a$ is regular. The sets of regular and Moore–Penrose invertible elements are denoted by $\mathcal{A}^{(1)}$ and $\mathcal{A}^{\dag}$, respectively. Clearly, $\mathcal{A}^{\dag}$ is subset of $\mathcal{A}^{(1)}$.

The star partial order was introduced by Drazin in \cite{Drazin} for semigroups. Since then, it has been extensively studied by numerous authors in the contexts of complex matrices and bounded linear operators on infinite-dimensional Hilbert spaces. Moreover, this order has been extended to rings with involution, and in particular to Rickart *-rings. Several equivalent definitions of the star partial order are available in this setting.  For elements $a, b$ in a Rickart *-ring, \cite{Ci2} and \cite{MaRaDj} proposed the following definition: $a\s b$ if and only if $a^*a=a^*b$ and $aa^*=ba^*$. In a Rickart *-ring, $a^*a=a^*b$ if and only if $a=\lp(a)b$, whereas $aa^*=ba^*$ if and only if $ a= b\rp(a)$ (\cite[Lemma 3.2]{Ci2}, \cite[Lemmas 2.4, 2.5]{MaRaDj}). Therefore, 
\begin{equation}
 a\s b \text{ if and only if } \lp(a)b=a=b\rp(a). \tag{$*$}
\end{equation}
Moreover, if $a, b\in \mathcal{A}^\dag$ then $a\s b$ if and only if $aa^\dag= ba^\dag$ and $a^\dag a= a^\dag b$ (\cite{Drazin}, \cite{RaDj}). 

Baksalary and Mitra introduced the one-sided star partial orders, namely, the left and right star orders, for complex matrices in \cite{BaMi}. These orders were subsequently extended to bounded linear operators on Hilbert spaces (\cite{DeWa}, \cite{DoGuMa}, \cite{Cirulis1}). Several alternative definitions of the one-sided star partial orders appear for *-rings in the literature  (\cite{MaRaDj}, \cite{LePaTh}). In this work, we consider the following definitions given by C\={\i}rulis in \cite{CiarXiv}. Let $\mathcal{A}$ be a Rickart *-ring and let $a, b\in \mathcal{A}$: 
\begin{equation}\label{defrs}
 a\rs b \text{ if and only if } a \rp(b)=a=b\rp(a), \tag{$\irs$}
\end{equation}

\begin{equation}\label{defls}
 a\ls b \text{ if and only if } \lp(a)b=a=\lp(b)a. \tag{$\ils$}
\end{equation}

The relations $\rs$ and $\ls$ are partial orders \cite[Theorem 3.3]{CiarXiv}. The definitions of the one-sided star partial orders in \cite{MaRaDj}, \cite{LePaTh} and \cite{CiarXiv} coincide in the setting of regular Rickart *-rings.

In this paper, we introduce new partial orders on a Rickart *-ring $\mathcal{A}$. These binary relations are defined by combining the star, left star, or right star partial orders with one of the following additional conditions:
\begin{align}
a^2 & =ab, \tag{$\ell $\scalebox{0.47}{$\square$}} \\
 a^2 & =ba, \tag{$r$\scalebox{0.47}{$\square$}}\\
 a^2 & =ab=ba, \tag{\scalebox{0.47}{$\square$}}
\end{align}
for $a, b\in \mathcal{A}$. More precisely, we define on $\mathcal{A}$ the binary relations  $\rslc $, $\rsrc$, $\rsc $, $\lslc$, $\lsrc $, $\lsc $, $\slc$, $\src$ and $\scu$, where, for instance, $a\rslc b$ means that $a\rs b$ and that condition ($\ell $\scalebox{0.47}{$\square$}) is satisfied. The remaining relations are defined analogously. We first show that these binary relations define partial orders on a Rickart *-ring. Furthermore, these relations, along with conditions $(\ell \scalebox{0.47}{$\square$})$, $(r \scalebox{0.47}{$\square$})$, and $(\scalebox{0.47}{$\square$})$ are closely related to several well-known partial orders studied in the literature.

For example, if $a$ and $b$ are group invertible elements, then $a\lsc b$ if and only if $a\ls b$ and $a$ is less than or equal to $b$ with respect to the sharp partial order. Moreover, we show that the partial order $\lsrc$ on the set of core invertible elements coincides with the core partial order, and hence relation $\lsrc$ provides an extension of the core partial order to arbitrary elements of a Rickart *-ring. Similarly, the restriction of $\rslc$ to the set of dual core invertible elements coincides with the dual core partial order. The details of these relationships, as well as their connections to the one-sided sharp partial orders, are presented in Section \ref{sec:newpartialorders}.

After introducing the definitions and establishing the fundamental properties of the new partial orders, we proceed to investigate the order structure induced by each of these relations. 

Recall that a partially ordered set $(R, \leq)$ (or simply a poset) is called a lattice if, for every $a, b\in R$, the least upper bound (supremum) $a\vee b$ and the greatest lower bound (infimum) $a\wedge b$ of the set $\{a, b\}$ exist. A map $\phi\colon R\to S$, where $R$ and $S$ are posets, is said to be order-preserving if $\phi(a)\leq \phi(b)$ whenever $a\leq b$. The posets $R$ and $S$ are said to be order-isomorphic, or simply isomorphic, if there exists a bijection $\phi\colon R \to S$ such that both $\phi$ and $\phi^{-1}$ are order-preserving. In that case, $\phi$ is called an isomorphism, and we write $R \simeq S$. See \cite{Sanka} for more details about lattices.

The set of self-adjoint idempotents of a Rickart *-ring $\mathcal{A}$ is denoted by $E(\mathcal A)$. It is well-known that the binary relation defined by $p\leq q$ if and only if $p=pq$, for $p,q\in E(\mathcal A)$, is a partial order on $E(\mathcal A)$. Since $p$ and $q$ are self-adjoint, the equality $p=pq$ is equivalent to $p=qp$. Furthermore, $E(\mathcal A)$ is a lattice \cite[Proposition 1.15]{Be}. 

In this paper, we investigate the order structure of a Rickart *-ring equipped with each of the newly defined partial orders through appropriate subsets of self-adjoint idempotents. Specifically, for every $b\in \mathcal{A}$, we prove that the corresponding down-set $[0, b]^\alpha=\{a\in \mathcal{A}: a \stackrel{\alpha}{\preceq} b\}$, with $\alpha\in \{ \crslc, \crsrc, \crsc, \clslc, \clsrc, \clsc, \cslc, \csrc, \cscu  \}$, is order-isomorphic to a subset $E_b^{\alpha}\subseteq E(\mathcal{A})$, where the elements of $E_b^{\alpha}$ satisfy additional conditions determined by the partial order $ \stackrel{\alpha}{\preceq}$. These order-isomorphisms make it possible to study the induced order structures more effectively, since the analysis of self-adjoint idempotents is particularly tractable. 

As a first consequence of these order-isomorphisms, we characterize the elements which are below an element $b$ with respect to each of the new partial orders. When $a \stackrel{\alpha}{\preceq}b$, we also prove that the poset $[a,b]^{\alpha}= \{x\in \mathcal{A}: a \stackrel{\alpha}{\preceq} x \stackrel{\alpha}{\preceq} b\}$ is isomorphic to the down-set $[0, b-a]^{\alpha}$ for any $\alpha\in \{\crsc, \clsc, \cscu\}$. 

We then assume that the ring is regular and prove that, for any two elements $a_1, a_2\in [0, b]^\alpha$, both the supremum $a_1\vee a_2$ and the infimum $a_1 \wedge a_2$ exist in $[0, b]^\alpha$. Consequently, $[0, b]^\alpha$ is a lattice with respect to each of the partial orders. Moreover, explicit descriptions of the supremum and infimum are obtained. For a regular Rickart *-ring $\mathcal{A}$, we establish necessary and sufficient conditions for the existence of the supremum and infimum of two elements with respect to any of these partial orders, and we provide corresponding characterizations of these operations whenever they exist. We also prove that in a regular Baer *-ring  \cite{Be}, ordered by any $\stackrel{\alpha}{\preceq} $, there exist the supremum and infimum of a nonempty subset of elements of the ring if the subset is upper bounded, providing equational descriptions of them whenever they exist.

The paper is organized as follows. After a brief section of preliminaries, Section \ref{sec:newpartialorders} is devoted to the introduction of the new partial orders and the study of their fundamental properties. In Section \ref{sec:iso_down-sets}, we establish order-isomorphisms between the corresponding down-sets of an element $b$ and suitable subsets $E_b^\alpha$ of self-adjoint idempotents. Finally, in Section \ref{sec:latticestructureofdownsets}, we consider regular rings and investigate their lattice structure.
 
\section{Preliminaries}

We devote this section to recalling some results on Rickart *-rings, as well as relevant properties of the partial orders $\rs$, $\ls$, and $\s$. For completeness, we include some of the proofs. We conclude this section by presenting the definitions of the partial orders related to those defined in this work.
 
For each $a$ in a Rickart *-ring $\mathcal{A}$, let us consider the self-adjoint idempotents $\rp(a)$ and $\lp(a)$. It is not difficult to see that  
\begin{equation}\label{anuladoraderecha}
a^\circ=(1-\rp(a))\mathcal{A}= (\rp(a))^\circ
 \end{equation}

and 
\begin{equation}\label{anuladoraizquierda}
\prescript{\circ}{}a= \mathcal{A}(1-\lp(a))= \prescript{\circ}{}(\lp(a)).
 \end{equation}

In Lemma \ref{Lemma:propiedadesdellpyrp}, we collect several elementary properties of Rickart *-rings. Specific references for each property are given in the proof.

\begin{lemma}\label{Lemma:propiedadesdellpyrp} Let $\mathcal{A}$ be a Rickart *-ring and $a,b \in \mathcal{A} $. The following properties hold. 
\begin{enumerate}[(a)]
\item\label{prop0} $a\rp(a)=a$ and $\lp(a)a=a$. 
 \item\label{prop:rpa*lpa} $\rp(a^*)=\lp(a)$. 
 \item\label{prop1} $a=a\rp(b)$ if and only if $\rp(a)\rp(b)= \rp(a)$ if and only if $\rp(a)\leq \rp(b)$. 
 \item\label{prop2} $a=\lp(b)a$ if and only if $\lp(a)\leq \lp(b)$. 
 \item\label{prop3} If $a=b\rp(a)$ then $a=\lp(b)a$.
 \item\label{prop4} If $a=\lp(a)b$ then  $a=a\rp(b)$. 
 \item\label{prop:lpaMenorIguallpb} If $a= b\rp(a)$ then $\lp(a)\leq \lp(b)$. 
 \item\label{prop:rpaMenorrpb1} If $a= \lp(a)b$ then $\rp(a)\leq \rp(b)$. 
 
 \item\label{prop:rpapIgualp} If $p\in E(\mathcal{A})$ and $p\leq\rp(a)$ then $\rp(ap)=p$. 
\item\label{propiedadesdellpyrp} If $a^2= ab$ then $ \rp(a) a = \rp(a) b$.
\item\label{prop:dfnestrellaconlp} $a^*a=a^*b$ if and only if $a=\lp(a)b$.
\item\label{prop:dfnestrellaconrp} $aa^*=ba^*$ if and only if $ a= b\rp(a)$.
\end{enumerate} 
\end{lemma}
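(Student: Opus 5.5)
I prove the items in order, each from the defining annihilator identities $a^\circ=(1-\rp(a))\mathcal{A}=(\rp(a))^\circ$ and $\prescript{\circ}{}a=\mathcal{A}(1-\lp(a))=\prescript{\circ}{}(\lp(a))$ in \eqref{anuladoraderecha} and \eqref{anuladoraizquierda}, together with the involution. Many items are just pairs of left/right duals, so I do one side and transfer.

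\textbf{Items (a)--(d).} For (a), note $1-\rp(a)\in a^\circ$, so $a(1-\rp(a))=0$; dually $(1-\lp(a))a=0$. For (b), apply $*$ to the annihilator: $x\in (a^*)^\circ$ iff $x^*\in\prescript{\circ}{}a=\mathcal{A}(1-\lp(a))$ iff $x\in(1-\lp(a))\mathcal{A}$. Uniqueness of the projection generating the right annihilator then gives $\rp(a^*)=\lp(a)$. For (c), the equation $a=a\rp(b)$ says $1-\rp(b)\in a^\circ=(\rp(a))^\circ$, that is, $\rp(a)(1-\rp(b))=0$. Conversely, $\rp(a)=\rp(a)\rp(b)$ gives $a=a\rp(a)=a\rp(a)\rp(b)=a\rp(b)$. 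Finally, $\rp(a)\rp(b)=\rp(a)$ is exactly $\rp(a)\le\rp(b)$ in $E(\mathcal{A})$. Item (d) is the left dual of (c), or follows from (c) applied to $a^*,b^*$ using (b).

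\textbf{Items (e)--(i).} For (e), from $a=b\rp(a)$ I get $\lp(b)a=\lp(b)b\rp(a)=b\rp(a)=a$. Then (g) follows from (e) and (d). Dually, (f) comes from $a\rp(b)=\lp(a)b\rp(b)=\lp(a)b=a$, and (h) from (f) and (c). For (i), put $q=\rp(ap)$. Since $ap\,p=ap$, item (c) gives $q\le p$. For the reverse inequality, $ap(1-q)=0$ means $p(1-q)\in a^\circ=(1-\rp(a))\mathcal{A}$. Multiplying on the left by $\rp(a)$ gives $\rp(a)p(1-q)=0$. Since $p\le\rp(a)$ means $p=\rp(a)p$, this yields $p(1-q)=0$, so $p\le q$.

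\textbf{Item (j).} This is the step I expect to need care. From $a^2=ab$ I get $a(a-b)=0$, so $a-b\in a^\circ=(1-\rp(a))\mathcal{A}$. Hence $\rp(a)(a-b)=0$, because $\rp(a)(1-\rp(a))=0$. This gives $\rp(a)a=\rp(a)b$.

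\textbf{Items (k) and (l).} For (k), the backward direction is direct: $a^*b=a^*\lp(a)b=a^*a$, using $a^*\lp(a)=(\lp(a)a)^*=a^*$. For the forward direction, $a^*(a-b)=0$ puts $a-b\in(a^*)^\circ=(1-\rp(a^*))\mathcal{A}=(1-\lp(a))\mathcal{A}$ by (b). Therefore $\lp(a)(a-b)=0$, which gives $a=\lp(a)a=\lp(a)b$. Item (l) is obtained from (k) applied to $a^*,b^*$, taking adjoints and using (b) again.

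The only genuine subtlety in the whole lemma is the uniqueness argument in (b) and the reverse inequality in (i); everything else is one-line manipulation.
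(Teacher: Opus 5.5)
Your proof is correct. For items (a), (c)--(h) and (j) it is essentially the paper's own argument. Everything is read off from $a^\circ=(1-\rp(a))\mathcal{A}=\rp(a)^\circ$ and its left-hand analogue: (e) and (f) come from multiplying through and using (a), and (j) from placing $a-b$ in $a^\circ$.

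The real difference is in (b), (i), (k) and (l), which the paper does not prove but only cites from the literature. Your direct arguments for these are valid.
\begin{itemize}
\item For (b), the computation $(a^*)^\circ=(1-\lp(a))\mathcal{A}$ together with uniqueness of the generating projection gives $\rp(a^*)=\lp(a)$.
\item In (i), both inequalities $\rp(ap)\le p$ and $p\le\rp(ap)$ are handled correctly. For the first, invoking (c) silently uses $\rp(p)=p$ for $p\in E(\mathcal{A})$. This holds because $p^\circ=(1-p)\mathcal{A}$; alternatively, $1-p\in(ap)^\circ=\rp(ap)^\circ$ gives $\rp(ap)(1-p)=0$ directly.
\item For (l), applying (k) to $a^*,b^*$ works because $aa^*=ba^*$ is equivalent to its adjoint $aa^*=ab^*$, and $\lp(a^*)=\rp(a)$ by (b).
\end{itemize}

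What your version buys is a fully self-contained lemma: all twelve items follow from the two annihilator identities and the uniqueness of $\rp$ and $\lp$. The paper's version buys brevity at the cost of external dependencies.
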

\begin{proof} Let us prove $(\ref{prop0})$. From $(\rp(a))^2=\rp(a)$ and (\ref{anuladoraderecha}), we have $\rp(a)(\rp(a)-1)=0$  if and only if $ a(\rp(a)-1)=0$  if and only if  $ a\rp(a)=a$. Analogously, $\lp(a)a=a$ using (\ref{anuladoraizquierda}). (See \cite[Proposition 2.4$\rm(b)$]{Ci2} and \cite[Proposition 2.1$(\rm b)$]{CiarXiv}).

The proof of $(\ref{prop:rpa*lpa})$ can be found in \cite[Lemma 2.7]{MaRaDj}. 

$(\ref{prop1})$ is an easy consequence of $(\ref{anuladoraderecha})$. Indeed, $a=a\rp(b)$ if and only if  $a(1-\rp(b))=0$ if and only if $\rp(a)(1-\rp(b))=0$ if and only if $\rp(a)= \rp(a)\rp(b)$ (see (3.6) in \cite{CiarXiv}). Similarly, $(\ref{prop2})$ is an easy consequence of $(\ref{anuladoraizquierda})$ (see (3.5) in \cite{CiarXiv}). 

If $a=b\rp(a)$ then $(1-\lp(b))a=(1-\lp(b))b\rp(a)$. Taking into account $(\ref{prop0})$, $(1-\lp(b))a=0$ and in consequence $a=\lp(b)a$. Thus, (\ref{prop3}) holds.

Property $(\ref{prop4})$ is proved similarly to (\ref{prop3}) but post-multiplicating $a=\lp(a)b$ by $1-\rp(b)$ and using $(\ref{prop0})$. 

$(\ref{prop:lpaMenorIguallpb})$ and $(\ref{prop:rpaMenorrpb1})$ are immediate from $(\ref{prop3})$ and $(\ref{prop2})$, and from  ($\ref{prop4}$) and ($\ref{prop1}$), respectively.

The proof of $(\ref{prop:rpapIgualp})$ can be found in \cite[Proposition 2.4$\rm(g)$]{Ci2}.

If $a^2= ab$ then $ (a-b)\in a^{\circ}=\rp(a)^\circ$. So, $\rp(a)(a-b)=0$ and this implies that $ \rp(a) a = \rp(a) b$. Thus, (\ref{propiedadesdellpyrp}) holds.

The proofs of $(\ref{prop:dfnestrellaconlp})$ and $(\ref{prop:dfnestrellaconrp})$ can be found in \cite[Lemma 3.2]{Ci2} and \cite[Lemmas 2.4 and 2.5]{MaRaDj}. 
\end{proof}

We now present several remarks on the partial orders $\rs$, $\ls$ and $\s$ that are essential for our purpose.

\begin{remark}\label{estrellaifflaterales}
In a Rickart *-ring, $a \s b$ if and only if $a \ls b$ and $a \rs b$. Indeed, the ``if'' condition is trivial from the definitions of $\ls$, $\rs$ and $\s$. 
Conversely, if $a\s b$ then $a\rs b$ by Lemma \ref{Lemma:propiedadesdellpyrp}($\ref{prop4}$). Finally, $a\ls b$ by Lemma \ref{Lemma:propiedadesdellpyrp}($\ref{prop3}$). 

\end{remark}

\begin{remark}\label{phibiendefinida} If $a\rs b$, by Lemma \ref{Lemma:propiedadesdellpyrp}(\ref{prop1}), we have $\rp(a)\leq \rp(b)$. Similarly, by Lemma \ref{Lemma:propiedadesdellpyrp}(\ref{prop2}), if $a\ls b$ then $\lp(a)\leq \lp(b)$. Finally, in view of Remark \ref{estrellaifflaterales} (see also \cite[Corollary 3.4]{Ci2}), if $a\s b$ then $\lp(a)\leq \lp(b)$ and $\rp(a)\leq \rp(b)$.  
\end{remark}

\begin{remark}\label{ls isomorfo rs} It is easy to verify that the posets $\left(\mathcal{A}, \ls\right)$ and $\left(\mathcal{A}, \rs\right)$ are isomorphic via the involution $*$ as a map 
\[*\colon \left(\mathcal{A}, \ls\right) \to \left(\mathcal{A}, \rs\right)\]
 which assigns to each element $a$ its adjoint $a^*$. This map is clearly bijective. Moreover, 
$a\ls b$ if and only if $\lp(a)b=a=\lp(b)a$, which is equivalent to $b^*\lp(a)= a^*= a^*\lp(b)$. By Lemma \ref{Lemma:propiedadesdellpyrp}(\ref{prop:rpa*lpa}) these equalities are in turn equivalent to $b^*\rp(a^*)= a^*= a^* \rp(b^*) $, and hence to $a^*\rs b^*$. Therefore, both $*$ and its inverse (which coincides with $*$) are order-preserving.
\end{remark}

In the remainder of this section, we recall the definitions of some partial orders on rings from the literature that are related to the partial orders introduced in this paper.

An element $a\in\mathcal{A}$ is group invertible if there exists $x\in\mathcal{A}$ such that $axa= a$,  $xax=x$  and  $ax= xa$. This element is also unique and is denoted by $a^\sharp$; it is called the group inverse of $a$ \cite{Ra}. An element $a^\oplus \in \mathcal{A}$ is called the core inverse of $a$ if it satisfies $aa^\oplus a = a$, $a^\oplus \mathcal{A} =a\mathcal{A}$ and $ \mathcal{A}a^\oplus =\mathcal{A}a^*$. An element $a_\oplus\in \mathcal{A}$ is called the dual core inverse of $a$ if it satisfies $aa_\oplus a = a$, $a_\oplus \mathcal{A} =a^*\mathcal{A}$ and $ \mathcal{A}a_\oplus =\mathcal{A}a$. Whenever they exist, these elements are unique \cite{RaDiDj}. The sets of group invertible, core invertible, and dual core invertible elements of $\mathcal{A}$ are denoted by $\mathcal{A}^\sharp$, $\mathcal{A}^\oplus$ and $\mathcal{A}_\oplus$, respectively. Clearly, $\mathcal{A}^\sharp$, $\mathcal{A}^\oplus$ and $\mathcal{A}_\oplus$ are all subsets of $\mathcal{A}^{(1)}$. 

Mitra introduced the sharp partial order on the set of complex matrices admitting a group inverse in \cite{Mi}. This order was subsequently studied in detail by Mitra, Bhimasankaram and Malik in \cite{MiBhMa}, where several fundamental properties were established. Later, Cvetkovi\'{c}-Ili\'{c}, Mosi\'{c} and Wei \cite{CvMoWe}, as well as Jose and Sivakumar \cite{ShSi}, investigated this order in the setting of bounded linear operators on Hilbert spaces. The sharp partial order was subsequently extended to more general settings, including *-rings, in \cite{Mar}, \cite{Ra}, and \cite{RaDj}. Following \cite{RaDj}, for $a,b \in \mathcal{A}^\sharp$, $a \sh b$ if and only if $a^\sharp a=a^\sharp b$ and $aa^\sharp=b a^\sharp $. Moreover, using properties of the group inverse it is not difficult to see that $a^\sharp a=a^\sharp b$ if and only if $a^2=ab$, and, $a a^\sharp =b a^\sharp $ if and only if $ a^2=ba$. Therefore, 
\begin{equation}\label{dfn_sh} 
a \sh b \text{ if and only if } a^2=ab=ba 
\end{equation}

In \cite{MiBhMa}, Mitra, Bhimasankaram and Malik introduced the left and right sharp partial orders on the set of complex matrices admitting a group inverse, by relaxing one of the defining conditions of the sharp partial order. Subsequently, Marovt extended these notions to bounded linear operators on Hilbert spaces in \cite{Ma-One-sidedsharp}. The concepts of left and right sharp partial orders were later generalized to rings in \cite{Ma-One-sidedsharp} and, independently, in \cite{Ra}. Following \cite{Ma-One-sidedsharp}, for $a,b \in \mathcal{A}^\sharp$, the relation $\lsh$ is defined by $a \lsh b$ if and only if $a^\sharp a=a^\sharp b$ and $\prescript{\circ}{}b\subseteq \prescript{\circ}{}a$. Similarly, $\rsh$ is defined by $a \rsh b$ if and only if $aa^\sharp =b a^\sharp $ and $b^\circ\subseteq a^\circ$. Moreover, 
\begin{equation}\label{dfn_lsh}a \lsh b \text{ if and only if } a^2=ab \text{ and } \prescript{\circ}{}b\subseteq \prescript{\circ}{}a, 
\end{equation}
\begin{equation}\label{dfn_rsh}
a\rsh \text{ if and only if } a^2=ba \text{ and  } b^\circ\subseteq a^\circ.
\end{equation}

 The core and dual core partial orders were originally introduced by Baksalary and Trenkler in \cite{BaTr} for complex matrices of index at most one. These notions were later studied in the context of bounded linear operators on Hilbert spaces in \cite{CvMoWe}, and independently in \cite{Mar}. Subsequently, in \cite{Ra}, the core and dual core partial orders were extended to Rickart *-rings through annihilator conditions rather than generalized inverses. Independently, these orders were introduced in \cite{RaDj} on arbitrary *-rings through the core and dual core inverses. More precisely, for $a,b\in\mathcal{A}^\oplus$, the relation $a \leq^\oplus b$ is defined by $a^\oplus a= a^\oplus b$ and $aa^\oplus = ba^\oplus$. Similarly, for $a,b\in\mathcal{A}_\oplus$,  $a \leq_\oplus b$ if and only if $ a_\oplus a= a_\oplus b $ and $ aa_\oplus = ba_\oplus$.

\section{New partial orders}\label{sec:newpartialorders}

 The purpose of this section is to introduce all the new binary relations on a Rickart *-ring proposed in this paper. These relations are obtained by combining the partial orders $\rs$, $\ls$ and $\s$ with one of the condition ($\ell$\scalebox{0.47}{$\square$}), ($r$\scalebox{0.47}{$\square$}) and (\scalebox{0.47}{$\square$}). First, we prove that all these relations define partial orders on a Rickart *-ring. We then study some of their basic properties. The section concludes with an analysis of the relationships between these new partial orders and several well-known partial orders, including the core, dual core, and sharp partial orders.

\begin{definition}\label{dfn:rscuadrados}
Let $\mathcal{A}$ be a Rickart *-ring and $a,b \in \mathcal{A} $. Then 
\begin{equation}
 a \rslc b \text{ if and only if } a \rp(b)=a=b\rp(a) \text{ and } a^2 =ab, \tag{$\irslc$}
\end{equation}

\begin{equation}
 a \rsrc b \text{ if and only if } a \rp(b)=a=b\rp(a) \text{ and } a^2 =ba, \tag{$\irsrc$}
\end{equation}

\begin{equation}
 a \rsc b \text{ if and only if } a \rp(b)=a=b\rp(a) \text{ and } a^2 =ab=ba. \tag{$\irsc$}
\end{equation}
\end{definition}

Clearly, 
\begin{equation}\label{rsc iff rslc rsrc}
a \rsc b \text{ if and only if } a \rslc b \text{ and } a \rsrc b, 
\end{equation}
and 
\begin{equation}\label{rsconcuadradosimplicanrs}
a\stackrel{_{\alpha}}\preceq b \text{ implies }a\rs b, \text{ for each } \alpha\in \{ \crslc,\crsrc, \crsc \}.
\end{equation}

\begin{theorem}\label{teo:rspartialorders}
Let $\mathcal{A}$ be a Rickart *-ring. The binary relations $\rslc$, $\rsrc$ and $\rsc$ are partial orders.
\end{theorem}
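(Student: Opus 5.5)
Since $\rs$ is already a partial order by \cite[Theorem 3.3]{CiarXiv}, and each of the three new relations is $\rs$ together with an extra algebraic condition, reflexivity and antisymmetry come essentially for free. Reflexivity holds because $a\rs a$ and $a^2=aa$ trivially. For antisymmetry, if $a\stackrel{\alpha}{\preceq} b$ and $b\stackrel{\alpha}{\preceq} a$ for $\alpha\in\{\crslc,\crsrc,\crsc\}$, then by (\ref{rsconcuadradosimplicanrs}) we get $a\rs b$ and $b\rs a$, hence $a=b$. So the whole content of the theorem is transitivity. By (\ref{rsc iff rslc rsrc}) it suffices to treat $\rslc$ and $\rsrc$, since $\rsc$ is their intersection and an intersection of transitive relations is transitive.

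Let $a\stackrel{\alpha}{\preceq} b\stackrel{\alpha}{\preceq} c$. Transitivity of $\rs$ gives $a\rs c$, so only the square condition needs checking. The tools are the identities $a=b\rp(a)$, $a=a\rp(b)$, $b=c\rp(b)$, $b=b\rp(c)$, together with $\rp(a)\leq\rp(b)$ from Remark \ref{phibiendefinida}.

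For $\rsrc$ we need $a^2=ca$. Using $a=b\rp(a)$ and $\rp(a)=\rp(b)\rp(a)$, we compute
\[
ca=cb\rp(a)\quad\text{and}\quad cb\rp(a)=cb\rp(b)\rp(a).
\]
Since $b^2=cb$ and $b=b\rp(b)$, this becomes
\[
ca=b^2\rp(a)=b\,(b\rp(a))=ba=a^2.
\]

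For $\rslc$ we need $a^2=ac$. Here Lemma \ref{Lemma:propiedadesdellpyrp}(\ref{propiedadesdellpyrp}) applied to $b^2=bc$ gives $\rp(b)b=\rp(b)c$. Then
\[
ac=a\rp(b)c=a\rp(b)b=ab=a^2,
\]
using $a=a\rp(b)$ and $a^2=ab$.

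The only step I expect to require care is this $\rslc$ case. There the left-multiplied condition must be converted into an identity with $\rp(b)$ on the left, and item (\ref{propiedadesdellpyrp}) of the lemma is exactly what supplies it. The $\rsrc$ case uses only the defining equalities and $\rp(a)\leq\rp(b)$.
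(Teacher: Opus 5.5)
Your proof is correct and matches the paper's argument essentially line by line. Reflexivity and antisymmetry are inherited from $\rs$. Transitivity of $\rslc$ uses Lemma~\ref{Lemma:propiedadesdellpyrp}(\ref{propiedadesdellpyrp}) to get $\rp(b)c=\rp(b)b$, transitivity of $\rsrc$ comes from $ca=cb\rp(a)=b^2\rp(a)=ba=a^2$, and $\rsc$ is handled as the intersection. The only difference is that your detour through $\rp(b)\rp(a)$ in the $\rsrc$ case is unnecessary, since $cb=b^2$ can be substituted directly; that case does not even need $\rp(a)\leq\rp(b)$.
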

\begin{proof} The relations $\rslc$ and $\rsrc$ are reflexive by Lemma \ref{Lemma:propiedadesdellpyrp}(\ref{prop0}). Since $\rs$ is a partial order on $\mathcal {A}$ and according to (\ref{rsconcuadradosimplicanrs}), $\rslc$ and $\rsrc$ are clearly antisymmetric. We prove now that the transitive property holds for these relations. Let $a, b, c \in \mathcal{A}$ be such that $ a \stackrel{_{\alpha}}\preceq b$ and $b \stackrel{_{\alpha}}\preceq c$ with $\alpha\in \{ \crslc, \crsrc \}$. Since $\rs$ satisfies the transitivity property, it is clear that $a\rs c$. Then for  $\rslc$, it only remains to show that condition $ac=a^2$ holds. By Lemma \ref{Lemma:propiedadesdellpyrp}(\ref{propiedadesdellpyrp}), since $b^2=bc$,  $\rp(b)c= \rp(b) b$. Then, from $a= a\rp(b)$ and $ab= a^2$, it follows that $a c= a \rp(b) c= a \rp(b) b= a b= a^2$. To see the transitivity of $\rsrc$, we only need to prove that $ca=a^2$. Indeed, from $a= b\rp(a)$, $cb=b^2$ and $ba=a^2$, we deduce that $ca= c b\rp(a)= b b \rp(a)= ba  = a^2$.

Finally, combining that $\rslc$ and $\rsrc$ are partial orders with the equivalence in (\ref{rsc iff rslc rsrc}), we conclude that $\rsc$ is also a partial order.
\end{proof}

\begin{definition}\label{dfn:lscuadrados}
Let $\mathcal{A}$ be a Rickart *-ring and $a,b \in \mathcal{A} $. Then 
\begin{equation}
 a \lslc b \text{ if and only if }  \lp(a)b=a=\lp(b)a \text{ and } a^2 =ab, \tag{$\ilslc$}
\end{equation}

\begin{equation}
 a \lsrc b \text{ if and only if }  \lp(a)b=a=\lp(b)a\text{ and } a^2 =ba, \tag{$\ilsrc$}
\end{equation}

\begin{equation}
 a \lsc b \text{ if and only if }  \lp(a)b=a=\lp(b)a\text{ and } a^2 =ab=ba. \tag{$\ilsc$}
\end{equation}
\end{definition}

As an immediate consequence of the definitions given above we have that 

\begin{equation}\label{lsc iff lslc lsrc}
 a \lsc b \text{ if and only if } a \lslc b \text{ and } a \lsrc b, 
\end{equation}
and 
\begin{equation}\label{lsconcuadradosimplicanls}
a\stackrel{_{\alpha}}\preceq b \text{ implies }a\ls b, \text{ for each } \alpha\in \{ \clslc,\clsrc, \clsc \}.
\end{equation}

\begin{remark}\label{lsrc lsls isomorfos lslc lsrc} It is easy to see that $a$ and $b$ satisfy $(\ell \scalebox{0.47}{$\square$})$ if and only if $a^*$ and $b^*$ satisfy $(r \scalebox{0.47}{$\square$})$, and, $a$ and $b$ satisfy $(\scalebox{0.47}{$\square$})$ if and only if $a^*$ and $b^*$ satisfy $(\scalebox{0.47}{$\square$})$. Then, by Remark \ref{ls isomorfo rs}, we have that 
\[a \lslc b \text{ if and only if }  a^*\rsrc b^*,\] 
\[ a \lsrc b \text{ if and only if }  a^*\rslc b^*\] and \[a \lsc b \text{ if and only if } a^*\rsc b^*.\]  
\end{remark}

In order to prove that $\lslc$, $\lsrc$, and $\lsc$ are partial orders, we may argue in a way similar to the proof of Theorem \ref{teo:rspartialorders}. However, this fact also follows easily as a consequence of Remark \ref{lsrc lsls isomorfos lslc lsrc} and Theorem \ref{teo:rspartialorders}.

\begin{theorem}\label{teo:lspartialorders}
Let $\mathcal{A}$ be a Rickart *-ring. The binary relations $\lslc$, $\lsrc$ and $\lsc$ are partial orders.
\end{theorem}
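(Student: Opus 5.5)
The plan is to transport the partial order property from Theorem \ref{teo:rspartialorders} through the involution, using Remark \ref{lsrc lsls isomorfos lslc lsrc}. The key observation is that if $\leq_1$ is a partial order on $\mathcal{A}$ and $\leq_2$ is defined by $a\leq_2 b$ if and only if $a^*\leq_1 b^*$, then $\leq_2$ is again a partial order. This holds because $*$ is a bijection of $\mathcal{A}$ onto itself, being its own inverse. We will apply this with $\leq_1$ equal to $\rsrc$, $\rslc$ and $\rsc$. Remark \ref{lsrc lsls isomorfos lslc lsrc} then says that the corresponding $\leq_2$ are exactly $\lslc$, $\lsrc$ and $\lsc$.

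The three axioms are checked one at a time, and each is a one-line transfer.
\begin{itemize}
\item Reflexivity: $a\lslc a$ holds because $a^*\rsrc a^*$, which is reflexivity of $\rsrc$.
\item Antisymmetry: if $a\lslc b$ and $b\lslc a$, then $a^*\rsrc b^*$ and $b^*\rsrc a^*$. Hence $a^*=b^*$, and applying $*$ gives $a=b$.
\item Transitivity: $a\lslc b\lslc c$ gives $a^*\rsrc b^*\rsrc c^*$. Theorem \ref{teo:rspartialorders} then gives $a^*\rsrc c^*$, which is $a\lslc c$.
\end{itemize}
The arguments for $\lsrc$ (paired with $\rslc$) and for $\lsc$ (paired with $\rsc$) are identical.

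There is essentially no obstacle here. The only point to confirm is that Remark \ref{lsrc lsls isomorfos lslc lsrc} is a genuine equivalence for all $a,b$. This rests on Remark \ref{ls isomorfo rs} together with the following computations:
\begin{itemize}
\item Condition ($\ell$\scalebox{0.47}{$\square$}) for $a,b$ is $a^2=ab$. Taking adjoints gives $(a^*)^2=b^*a^*$, which is condition ($r$\scalebox{0.47}{$\square$}) for $a^*,b^*$.
\item Condition ($r$\scalebox{0.47}{$\square$}) for $a,b$ is $a^2=ba$. Taking adjoints gives $(a^*)^2=a^*b^*$, which is condition ($\ell$\scalebox{0.47}{$\square$}) for $a^*,b^*$.
\end{itemize}

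As a fallback, one could mimic the proof of Theorem \ref{teo:rspartialorders} directly.
\begin{itemize}
\item Reflexivity and antisymmetry come from Lemma \ref{Lemma:propiedadesdellpyrp}(\ref{prop0}) and from (\ref{lsconcuadradosimplicanls}), using that $\ls$ is a partial order.
\item Transitivity of $\lsrc$: use $a=\lp(a)b$, $b=\lp(b)c$ and $a=\lp(b)a$ (the last from $a\ls b$). Then $ca=c\lp(b)a$, and $c\lp(b)=c\,\lp(b)b^\dagger$-type identities are unavailable in general. So one would instead use the left analogue of Lemma \ref{Lemma:propiedadesdellpyrp}(\ref{propiedadesdellpyrp}): $b^2=cb$ gives $c\lp(b)=b\lp(b)$. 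Hence $ca=b\lp(b)a=ba=a^2$.
\item Transitivity of $\lslc$: write $a=\lp(a)b$, so that $ac=\lp(a)bc=\lp(a)b^2=ab=a^2$.
\item $\lsc$ then follows by combining the two, as in (\ref{lsc iff lslc lsrc}).
\end{itemize}
The involution argument is cleaner, however, and is the one I would present.
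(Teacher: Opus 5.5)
Your proof is correct and is essentially the paper's argument: the paper also obtains Theorem~\ref{teo:lspartialorders} by transporting Theorem~\ref{teo:rspartialorders} through the involution correspondences of Remark~\ref{lsrc lsls isomorfos lslc lsrc}, and it likewise notes that directly imitating the proof of Theorem~\ref{teo:rspartialorders}, as in your fallback, would also work. Your axiom-by-axiom check of reflexivity, antisymmetry and transitivity under $*$ simply spells out what the paper calls an easy consequence.
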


\begin{definition}\label{dfn:scuadrados}
Let $\mathcal{A}$ be a Rickart *-ring and $a,b \in \mathcal{A} $. Then

\begin{equation}
 a \slc b \text{ if and only if }  \lp(a)b=a=b\rp(a)\text{ and } a^2 =ab, \tag{$\islc$}
\end{equation}

\begin{equation}
 a \src b \text{ if and only if }  \lp(a)b=a=b\rp(a)\text{ and } a^2 =ba, \tag{$\isrc$}
\end{equation}

\begin{equation}
 a \scu b \text{ if and only if }  \lp(a)b=a=b\rp(a)\text{ and } a^2 =ab=ba. \tag{$\isc$}
\end{equation}

\end{definition}  

Clearly, 

\begin{equation}\label{scu iff slc src}
 a \scu b \text{ if and only if } a \slc b \text{ and } a \src b,\end{equation}
 and 
\begin{equation}\label{sconcuadradosimplicans}
a\stackrel{_{\alpha}}\preceq b \text{ implies }a\s b, \text{ for each } \alpha\in \{ \cslc,\csrc, \cscu \}.
\end{equation}
By Remark \ref{estrellaifflaterales}, the following equivalences are immediate:

\begin{equation}\label{slc iff lslc rslc}
 a \slc b \text{ if and only if } a \lslc b \text{ and } a \rslc b, 
\end{equation}
\begin{equation}\label{src iff lsrc rsrc}
 a \src b \text{ if and only if } a \lsrc b \text{ and } a \rsrc b,
\end{equation}
\begin{equation}\label{scu iff lsc rsc}
a \scu b \text{ if and only if } a \lsc b \text{ and } a \rsc b.
\end{equation}
\begin{theorem}
Let $\mathcal{A}$ be a Rickart *-ring. The binary relations $\slc$, $\src$ and $\scu$ are partial orders.
\end{theorem}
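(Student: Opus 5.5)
The plan is to reduce everything to what is already known, using the equivalences (\ref{slc iff lslc rslc}), (\ref{src iff lsrc rsrc}) and (\ref{scu iff lsc rsc}) together with Theorems \ref{teo:rspartialorders} and \ref{teo:lspartialorders}. Each of the three new relations is the intersection of two relations already shown to be partial orders: $\slc$ is the intersection of $\lslc$ and $\rslc$, $\src$ is the intersection of $\lsrc$ and $\rsrc$, and $\scu$ is the intersection of $\lsc$ and $\rsc$. The intersection of two partial orders on the same set is again a partial order, so I will state this general fact and then conclude.

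To verify the general fact, let $\leq_1$ and $\leq_2$ be partial orders on $\mathcal{A}$ and let $\leq$ be their intersection.
\begin{enumerate}[(i)]
\item Reflexivity: $a\leq_i a$ for $i=1,2$, so $a\leq a$.
\item Antisymmetry: $a\leq b$ and $b\leq a$ give $a\leq_1 b$ and $b\leq_1 a$, hence $a=b$. This needs only one of the two orders.
\item Transitivity: $a\leq b\leq c$ gives $a\leq_i c$ for each $i$, hence $a\leq c$.
\end{enumerate}
Applying this three times with the equivalences above finishes the argument.

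The only real content is the equivalences (\ref{slc iff lslc rslc})--(\ref{scu iff lsc rsc}) themselves, which the paper states are immediate from Remark \ref{estrellaifflaterales}. This is the step to double-check. The square conditions are the same on both sides, so it suffices that $\lp(a)b=a=b\rp(a)$ is equivalent to the conjunction of $\lp(a)b=a=\lp(b)a$ and $a\rp(b)=a=b\rp(a)$. That is exactly Remark \ref{estrellaifflaterales}, via Lemma \ref{Lemma:propiedadesdellpyrp}(\ref{prop3}) and (\ref{prop4}).

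For extra safety, the argument could also be done directly, as in Theorem \ref{teo:rspartialorders}:
\begin{enumerate}[(i)]
\item Reflexivity follows from Lemma \ref{Lemma:propiedadesdellpyrp}(\ref{prop0}).
\item Antisymmetry follows from (\ref{sconcuadradosimplicans}) and the fact that $\s$ is a partial order.
\item Transitivity of the square conditions follows from the computations $ac=a\rp(b)c=a\rp(b)b=ab=a^2$ and $ca=cb\rp(a)=b^2\rp(a)=ba=a^2$, which use $a\s b$.
\end{enumerate}
The intersection argument makes this unnecessary, so no step should present a genuine obstacle.
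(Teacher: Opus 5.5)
Your proposal is correct and matches the paper's proof: the paper likewise obtains the result from the equivalences (\ref{slc iff lslc rslc}), (\ref{src iff lsrc rsrc}) and (\ref{scu iff lsc rsc}) together with Theorems \ref{teo:rspartialorders} and \ref{teo:lspartialorders}, so each relation is an intersection of two partial orders. Your explicit intersection argument and the optional direct check of reflexivity, antisymmetry and transitivity are both sound, but neither is needed.
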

\begin{proof} It is a consequence of (\ref{slc iff lslc rslc}), (\ref{src iff lsrc rsrc}), (\ref{scu iff lsc rsc}), Theorem \ref{teo:rspartialorders} and Theorem \ref{teo:lspartialorders}.  
\end{proof}

Henceforth, whenever the order under consideration is not explicitly specified and we simply write $\stackrel{\alpha}{\preceq}$, it will be understood that $\stackrel{\alpha}{\preceq}$ denotes any of the partial orders introduced in Definitions \ref{dfn:rscuadrados}, \ref{dfn:lscuadrados}, or \ref{dfn:scuadrados}.

\begin{lemma} Let $\mathcal{A}$ be a Rickart *-ring. The following properties are satisfied in the poset $\left(\mathcal{A}, \stackrel{\alpha}{\preceq}\right)$.
 \begin{enumerate}[(a)]
  \item $0$ is the least element in $\left( \mathcal{A}, \stackrel{\alpha}{\preceq}\right)$, for all $\alpha$. 
  
  \item If $p, q\in E(\mathcal{A})$, $p\leq q$ if and only if $p \stackrel{\alpha}{\preceq} q$, for all $\alpha$.
  
  \item $a\stackrel{\alpha}{\preceq} 1$ if and only if $a\in E(\mathcal{A})$, for all $\alpha$.
  
  \item Every left invertible element is maximal when $\alpha \in \{ \crslc, \crsrc, \crsc, \cslc, \csrc, \cscu\}$. 
  
 \item Every right invertible element is maximal when $\alpha \in \{ \clslc, \clsrc, \clsc, \cslc, \csrc, \cscu \}$.  
  
 \end{enumerate}

\end{lemma}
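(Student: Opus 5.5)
The plan is to prove each item directly from the definitions, using Lemma \ref{Lemma:propiedadesdellpyrp} together with the decompositions (\ref{rsc iff rslc rsrc}), (\ref{lsc iff lslc lsrc}), (\ref{scu iff slc src}) and (\ref{slc iff lslc rslc})--(\ref{scu iff lsc rsc}). These reduce most checks to the strongest relations $\scu$, $\rsc$ and $\lsc$.

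\emph{Item (a).} Since $\lp(0)=\rp(0)=0$ (the annihilators of $0$ are all of $\mathcal{A}$), every defining equality becomes $0=0$. Hence $0\stackrel{\alpha}{\preceq} b$ for all $b$ and all $\alpha$.

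\emph{Item (b).} For $p\in E(\mathcal{A})$ we have $\lp(p)=\rp(p)=p$, because the annihilators of $p$ are $(1-p)\mathcal{A}$ and $\mathcal{A}(1-p)$. If $p\leq q$, then $pq=qp=p$. So $\lp(p)q=pq=p$, $q\rp(p)=qp=p$, $p\rp(q)=pq=p$, $\lp(q)p=qp=p$, and $p^2=p=pq=qp$. This gives $p\scu q$, and hence $p\stackrel{\alpha}{\preceq} q$ for every $\alpha$. Conversely, every $\alpha$ contains one of the conditions $p\rp(q)=p$ or $\lp(q)p=p$, i.e. $pq=p$ or $qp=p$. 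Either one gives $p\leq q$.

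\emph{Item (c).} By (b), each $p\in E(\mathcal{A})$ satisfies $p\stackrel{\alpha}{\preceq}1$. Conversely, suppose $a\stackrel{\alpha}{\preceq}1$.
\begin{itemize}
\item If $\alpha$ involves $\rs$, then $a=1\cdot\rp(a)=\rp(a)\in E(\mathcal{A})$.
\item If $\alpha$ involves $\ls$, then $a=\lp(a)\cdot 1=\lp(a)\in E(\mathcal{A})$.
\item If $\alpha$ involves $\s$, the same computation applies.
\end{itemize}

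\emph{Item (d).} Let $b$ be left invertible, with $yb=1$. Then $b^\circ=0$, so $\rp(b)=1$. Suppose $b\stackrel{\alpha}{\preceq} c$ with $\alpha\in\{\crslc,\crsrc,\crsc,\cslc,\csrc,\cscu\}$. By (\ref{rsconcuadradosimplicanrs}) and (\ref{sconcuadradosimplicans}), together with Remark \ref{estrellaifflaterales}, we get $b\rs c$, so $b=c\rp(b)=c$. Thus $b$ is maximal.

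\emph{Item (e).} Let $b$ be right invertible. Then ${}^\circ b=0$, so $\lp(b)=1$. The relations listed all imply $b\ls c$, which gives $b=\lp(b)c=c$. Alternatively, (e) follows from (d) through the involution isomorphisms of Remark \ref{lsrc lsls isomorfos lslc lsrc}.

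The only mildly delicate points are the identifications $\lp(p)=\rp(p)=p$ for $p\in E(\mathcal{A})$ and $\rp(b)=1$ for left invertible $b$. Both follow at once from uniqueness of the projection generating the annihilator: for the latter, $b^\circ=0=(1-1)\mathcal{A}$.
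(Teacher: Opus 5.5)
Your proof is correct and is essentially the direct verification from the definitions that the paper intends; the paper omits the proof, calling it analogous to C\={\i}rulis's arguments for $\s$ and $\rs$. One imprecision is in the converse of (b): the star-type relations do not literally contain $p\rp(q)=p$ or $\lp(q)p=p$, but they do contain $\lp(p)q=p$, i.e.\ $pq=p$, which gives $p\leq q$ directly (alternatively, invoke Remark \ref{estrellaifflaterales}).
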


We do not include the proof of the above lemma since it is analogous to the proof of \cite[Lemma 3.1]{Ci2} and \cite[Proposition 4.1]{CiarXiv} for the partial orders $\s$ and $\rs$.

\begin{remark} Maximal elements in $\left(\mathcal{A}, \stackrel{\alpha}{\preceq}\right)$ are not necessarily invertible. Indeed, when the involution is given by the transpose in the Rickart *-ring $M_2(\mathbb{Z}_3)$ of $2 \times 2$ matrices over $\mathbb{Z}_3$, the non-invertible matrix $M= \begin{pmatrix}
0 & 1 \\
0 & 0
\end{pmatrix}$ is a maximal element of $\left( M_2(\mathbb{Z}_3), \stackrel{\alpha}{\preceq}\right)$. We present the proof only for $\alpha= \crslc$, as the arguments for the remaining partial orders are analogous. Let $B=\begin{pmatrix}
x & y \\
z & t
\end{pmatrix} \in M_2(\mathbb{Z}_3) $ be such that $M\rsrc B$. Since $M^2=MB$, we obtain $z=0$ and $t=0$. Furthermore, $MM^T=BM^T$ implies $y=1$. Hence, $B=\begin{pmatrix}
x & 1 \\
0 & 0
\end{pmatrix}$ with $x\in\mathbb{Z}_3 $. If $x=1$ then $\rp (B)= \begin{pmatrix}
2 & 1 \\
1 & 2
\end{pmatrix}$, and thus $M\rp(B)\neq M$. If $x=2$ then $\rp (B)= \begin{pmatrix}
2 & 2 \\
2 & 2
\end{pmatrix}$, so again $M\rp(B)\neq M$. Therefore, $B=M$.   
\end{remark}

Taking into account Remark \ref{lsrc lsls isomorfos lslc lsrc}, the involution $*$ induces isomorphisms$\left(\mathcal{A}, \lslc\right)\simeq \left(\mathcal{A}, \rsrc\right)$, $\left(\mathcal{A}, \lsrc\right)\simeq \left(\mathcal{A}, \rslc\right)$ and $\left(\mathcal{A}, \lsc\right)\simeq \left(\mathcal{A}, \rsc\right)$. These isomorphisms are useful, as they allow properties established for the right star order to be transferred to the left star order, and conversely.

We denote by $[0, b]^\alpha=\{a\in \mathcal{A}: a \stackrel{\alpha}{\preceq} b\}$ the down-set of an element $b$ ordered by $ \stackrel{\alpha}{\preceq}$. The following result will be required in the next section.

\begin{lemma}\label{isomorfismosintervalosbybestrella} Let $\mathcal{A}$ be a Rickart *-ring and $b\in \mathcal{A}$. Then:
\begin{enumerate}[(a)]
 \item\label{isointerle}  $[0, b]^{\ils}$ is isomorphic to $[0, b^*]^{\irs}$.
 \item\label{isointerlerc}  $[0, b]^{\ilsrc}$ is isomorphic to $[0, b^*]^{\irslc}$.
 \item\label{isointerlelc}  $[0, b]^{\ilslc}$ is isomorphic to $[0, b^*]^{\irsrc}$.
 \item\label{isointerlec}  $[0, b]^{\ilsc}$ is isomorphic to $[0, b^*]^{\irsc}$.
\end{enumerate} 
\end{lemma}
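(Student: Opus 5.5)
The plan is to restrict the involution $*$ to the relevant down-sets, using the global isomorphisms already established. By Remark \ref{ls isomorfo rs}, $a\ls b$ holds if and only if $a^*\rs b^*$. By Remark \ref{lsrc lsls isomorfos lslc lsrc}, the three analogous equivalences hold for the orders carrying the square conditions: $a\lsrc b$ if and only if $a^*\rslc b^*$, $a\lslc b$ if and only if $a^*\rsrc b^*$, and $a\lsc b$ if and only if $a^*\rsc b^*$. So in each case (a)--(d) I take $\phi\colon [0,b]^{\beta}\to[0,b^*]^{\gamma}$, $\phi(a)=a^*$, where $(\beta,\gamma)$ is the appropriate pair from the statement.

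First, $\phi$ is well defined. If $a\in[0,b]^{\beta}$, then $a\stackrel{\beta}{\preceq} b$, so the corresponding equivalence gives $a^*\stackrel{\gamma}{\preceq} b^*$, i.e.\ $a^*\in[0,b^*]^{\gamma}$.

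Second, $\phi$ is a bijection. Define $\psi\colon[0,b^*]^{\gamma}\to[0,b]^{\beta}$ by $\psi(c)=c^*$. It is well defined by the same equivalence, read from right to left with $a=c^*$ and $(b^*)^*=b$, $(c^*)^*=c$. Since $*$ is involutive, $\psi\circ\phi$ and $\phi\circ\psi$ are identities.

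Third, $\phi$ and $\phi^{-1}=\psi$ are order-preserving. Both posets carry the orders induced from $(\mathcal{A},\stackrel{\beta}{\preceq})$ and $(\mathcal{A},\stackrel{\gamma}{\preceq})$ respectively. For $a_1,a_2\in[0,b]^{\beta}$, the same equivalence applied to the pair $(a_1,a_2)$ gives $a_1\stackrel{\beta}{\preceq}a_2$ if and only if $a_1^*\stackrel{\gamma}{\preceq}a_2^*$. This yields both directions at once.

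There is no real obstacle; the only point needing care is the bookkeeping of which one-sided square condition swaps under $*$. Since $(a^2)^*=(a^*)^2$, $(ab)^*=b^*a^*$ and $(ba)^*=a^*b^*$, the condition $a^2=ab$ becomes $(a^*)^2=b^*a^*$, which is condition $(r\,\square)$ for the pair $(a^*,b^*)$, and symmetrically. Hence $\ell\ast\mid r\square$ corresponds to $r\ast\mid \ell\square$, and $\ell\ast\mid\ell\square$ to $r\ast\mid r\square$, exactly as recorded in Remark \ref{lsrc lsls isomorfos lslc lsrc} and matching items (b) and (c) of the statement.
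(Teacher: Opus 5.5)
Your proposal is correct and takes the paper's own approach: the paper gets (a) from Remark~\ref{ls isomorfo rs} and (b)--(d) from Remark~\ref{lsrc lsls isomorfos lslc lsrc}, i.e.\ it restricts the involution to the down-sets, with the same pairing of square conditions you derived. You simply write out the routine checks (well-definedness, bijectivity, and order-preservation in both directions) that the paper leaves implicit.
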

\begin{proof} 
Assertion $(a)$ follows directly from Remark \ref{ls isomorfo rs}. The isomorphisms in $(b)$, $(c)$ and $(d)$ are immediate consequences Remark \ref{lsrc lsls isomorfos lslc lsrc}.
 \end{proof}

We conclude this section by relating the new partial orders to some well-known partial orders in the literature. Let $a, b\in\mathcal{A}^\oplus$ be such that $a\leq^\oplus b$. First, observe that $b^\dag $ exists, since $\mathcal{A}^\oplus\subseteq \mathcal{A}^{(1)}$ and the set of regular elements coincides with the set $\mathcal{A}^\dag$ of Moore-Penrose invertible elements in a Rickart *-ring. By \cite[Lemma 2.3]{RaDj} and Lemma \ref{Lemma:propiedadesdellpyrp}(\ref{prop:dfnestrellaconlp}), we have   
\[a^\oplus a= a^\oplus b \text{ if and only if } a^*a= a^*b \text{ if and only if } a=\lp(a)b,  \]
and
\[aa^\oplus = ba^\oplus  \text{ if and only if } a^2= ba .  \]
Since $a= aa^\oplus a= ba^\oplus a$, it follows that $a\mathcal{A}\subseteq b\mathcal{A}$ and it is easy to verify that 
\[a\mathcal A\subseteq b\mathcal A \text{ if and only if } a=bb^\dag a= \lp(b) a.\]

Therefore, if $a, b\in\mathcal{A}^\oplus$ then 
\begin{equation}\label{coreequallsrc}
 a\leq^\oplus b \text{ if and only if } a\lsrc b. 
\end{equation}

Hence, the partial order $\lsrc$ extends the core partial order from core-invertible elements to arbitrary elements of a Rickart *-ring. Analogously, $\rslc$ restricted to the set $\mathcal{A}_{\oplus}$ is exactly the dual core partial order in Rickart *-rings. Indeed, let $a, b\in\mathcal{A}_\oplus$ be such that $a\leq_\oplus b$. Once again, by \cite[Lemma 2.3]{RaDj} and Lemma \ref{Lemma:propiedadesdellpyrp}(\ref{prop:dfnestrellaconrp}), we have
\[a_\oplus a= a_\oplus b \text{ if and only if } aa^*= ba^* \text{ if and only if } a=b\rp(a),  \]
and
\[aa_\oplus = ba_\oplus  \text{ if and only if } a^2= ab .  \]
Moreover, if $a\leq_{\oplus}b$ then $\mathcal{A}a\subseteq \mathcal{A}b$. Taking into account that $b$ is regular, we obtain \[\mathcal Aa\subseteq \mathcal Ab \text{ if and only if } a=ab^\dag b = a\rp(b).\]
So, if $a, b\in\mathcal{A}_\oplus$ then 
\begin{equation}\label{dualcoreequalrslc}
 a\leq_\oplus b \text{ if and only if } a\rslc b.
\end{equation}

Considering the definition of the sharp partial order and $(\ref{dfn_sh})$, it is clear that if $a, b\in\mathcal{A}^\sharp $ 
\begin{equation}\label{eq:lscesshintersectadols}
 a\lsc b \text{ if and only if } a\ls b \text{ and } a\sh b, 
\end{equation}
\begin{equation}\label{eq:rscesshintersectadols}
 a\rsc b \text{ if and only if } a\rs b \text{ and } a\sh b, 
\end{equation}
\begin{equation}\label{eq:scesshintersectadols}
 a\scu b \text{ if and only if } a\s b \text{ and } a\sh b.  
\end{equation}

Let us observe that if $a\ls b$ then $\prescript{\circ}{}b\subseteq \prescript{\circ}{}a$ and $b^\circ\subseteq a^\circ$. Indeed, by $\prescript{\circ}{}b=\prescript{\circ}{}\lp(b)$, if $zb=0$ then $z\lp(b)a=za=0$, and, if $bz=0$ then $\lp(a)bz=0=az$. Analogously, if $a\rs b$ then $\prescript{\circ}{}b\subseteq \prescript{\circ}{}a$ and $b^\circ\subseteq a^\circ$. Thus, considering the definitions of $\lsh$ and $\rsh$ given in (\ref{dfn_lsh}) and (\ref{dfn_rsh}), we can state the following relationships for $a,b\in\mathcal{A}^\sharp $:
\[a\lslc b \text{ if and only if } a\ls b \text{ and } a\lsh b, \]
\[ a\lsrc b \text{ if and only if } a\ls b \text{ and } a\rsh b,\] 
 \[ a\rslc b \text{ if and only if } a\rs b \text{ and } a\lsh b,\] 
 \[ a\rsrc b \text{ if and only if } a\rs b \text{ and } a\rsh b,\]
 \[ a\slc b \text{ if and only if } a\s b \text{ and } a\lsh b,\]and finally,
  \[ a\src b \text{ if and only if } a\s b \text{ and } a\rsh b.\]

\section{Isomorphic representation of down-sets}\label{sec:iso_down-sets}

In this section we establish the isomorphisms between the down-sets of an element $b\in \mathcal{A}$, denoted by $[0, b]^\alpha=\{a\in \mathcal{A}: a \stackrel{\alpha}{\preceq} b\} $ and ordered by each $\stackrel{\alpha}{\preceq}$, and certain poset $E_b^\alpha$, ordered by $\leq$, consisting on self-adjoint idempotents that satisfy certain conditions depending on the partial order $\stackrel{\alpha}{\preceq}$ under consideration. As a consequence, we characterize the elements $a\in \mathcal{A} $ that are below $b$ for each $\stackrel{\alpha}{\preceq}$. We also prove that each segment $[a,b]^{\alpha}$ is isomorphic to the down-set $[0, b-a]^{\alpha}$, when $\alpha\in \{\crsc, \clsc, \cscu\}$. 

We begin by studying the down-set of an element $b$ with respect to the orders $\rslc$, $\rsrc$ and $\rsc$. Our approach relies on a result established by C\={\i}rulis in \cite{CiarXiv} concerning the down-sets $[0, b]^{\irs}=\{a\in \mathcal {A}: a\rs b \}$ and $E^{\irs}_{b}=\{p\in E(\mathcal A ): p\leq \rp(b) \}$. In that work, C\={\i}rulis introduced the maps \[\phi \colon [0, b]^{\irs}\to E^{\irs}_{b}, \text{ defined by }\phi (a)=\rp(a), \]
and 
\[\psi \colon E^{\irs}_{b}\to  [0, b]^{\irs},\text{ defined by } \psi(p)=bp. \]
Observe that, if $a\in [0, b]^{\irs}$, then $\rp(a)\leq \rp(b) $ by Remark \ref{phibiendefinida}. Conversely, if $p\in  E^{\irs}_{b}$, then $p\rp(b)= p$ and $bp\rp(b)= bp= b\rp(bp)$ by Lemma \ref{Lemma:propiedadesdellpyrp}(\ref{prop:rpapIgualp}). Moreover, C\={\i}rulis proved that $\phi$ and $\psi$ are order-preserving and mutually inverse. As a consequence, the following result was obtained.

\begin{theorem}\cite[Theorem 4.2]{CiarXiv} \label{thm:isodecrecienteRestrella} Let $\mathcal{A}$ be a Rickart *-ring and $b\in\mathcal{A}$. Then $[0, b]^{\irs} $ is isomorphic to $E^{\irs}_{b}$. 
\end{theorem}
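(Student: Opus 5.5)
The plan is to check directly that the two maps of C\={\i}rulis, $\phi(a)=\rp(a)$ and $\psi(p)=bp$, are well defined, mutually inverse and order-preserving. All of these are short computations with Lemma~\ref{Lemma:propiedadesdellpyrp}.

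\emph{Step 1: well-definedness.} Take $a\in[0,b]^{\irs}$. Then $a=a\rp(b)$, and Lemma~\ref{Lemma:propiedadesdellpyrp}(\ref{prop1}) gives $\rp(a)\leq\rp(b)$, so $\phi(a)\in E^{\irs}_b$.

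Conversely, take $p\in E^{\irs}_b$, so $p\leq \rp(b)$. Lemma~\ref{Lemma:propiedadesdellpyrp}(\ref{prop:rpapIgualp}) gives $\rp(bp)=p$. Hence $b\,\rp(bp)=bp$, and $bp\,\rp(b)=b\,p\rp(b)=bp$. Therefore $bp\rs b$, so $\psi(p)\in[0,b]^{\irs}$.

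\emph{Step 2: mutual inverses.}
\begin{itemize}
\item For $p\in E^{\irs}_b$: $\phi\psi(p)=\rp(bp)=p$, again by Lemma~\ref{Lemma:propiedadesdellpyrp}(\ref{prop:rpapIgualp}).
\item For $a\in[0,b]^{\irs}$: $\psi\phi(a)=b\rp(a)=a$, by the defining identity $a=b\rp(a)$ in~(\ref{defrs}).
\end{itemize}
So $\phi$ and $\psi$ are bijections, each the inverse of the other.

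\emph{Step 3: $\phi$ is order-preserving.} Suppose $a_1\rs a_2$ inside $[0,b]^{\irs}$. Then $a_1=a_1\rp(a_2)$, and Lemma~\ref{Lemma:propiedadesdellpyrp}(\ref{prop1}) gives $\rp(a_1)\leq\rp(a_2)$.

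\emph{Step 4: $\psi$ is order-preserving.} Suppose $p\leq q$ in $E^{\irs}_b$, so $p=pq=qp$. We need $bp\rs bq$, which amounts to two identities:
\[ bp\,\rp(bq)=bp \qquad\text{and}\qquad bq\,\rp(bp)=bp. \]
Since $q\leq\rp(b)$, Lemma~\ref{Lemma:propiedadesdellpyrp}(\ref{prop:rpapIgualp}) gives $\rp(bq)=q$, so the first identity is $bpq=bp$. Similarly $\rp(bp)=p$, so the second is $bqp=bp$. Both follow from $p=pq=qp$.

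The only step that is not purely formal is the repeated use of $\rp(bp)=p$ for $p\leq\rp(b)$. That is exactly Lemma~\ref{Lemma:propiedadesdellpyrp}(\ref{prop:rpapIgualp}), so no real obstacle is expected. Combining Steps 1–4, $\phi$ is an order-isomorphism between $[0,b]^{\irs}$ and $E^{\irs}_b$.
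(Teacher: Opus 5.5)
Your proposal is correct and follows the route the paper indicates: the same maps $\phi(a)=\rp(a)$ and $\psi(p)=bp$, with well-definedness from Lemma~\ref{Lemma:propiedadesdellpyrp}(\ref{prop1}) and (\ref{prop:rpapIgualp}). The paper only sketches well-definedness and cites C\={\i}rulis for the mutual-inverse and order-preservation checks; your Steps 2--4 carry out those checks directly, in the same way the paper proves the analogous Theorem~\ref{thm:isodecrecienteEstrella} for the star order.
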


We first establish several properties that will be needed in the sequel. For each $b\in \mathcal{A}$, we denote the element $\rp(b) b$ by $b_r$. 

\begin{lemma}\label{AuxiliarIso1}
Let $\mathcal{A}$ be a Rickart *-ring and $a,b\in \mathcal{A}$.  
\begin{enumerate}[(a)]
 \item\label{rslc rpbr} If $a \rslc b$ then $\rp(a) b_r \rp(a) =\rp(a) b_r$.
 \item\label{rsrc rpbr} If $a \rsrc b$ then $\rp(a) b_r \rp(a) =b_r\rp(a)$.
\end{enumerate}
 
\end{lemma}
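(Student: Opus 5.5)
The plan is a direct computation from the defining identities, using only Lemma \ref{Lemma:propiedadesdellpyrp}. The key facts are $a\rp(b)=a=b\rp(a)$, which is common to both relations, together with $a^2=ab$ in part (a) and $a^2=ba$ in part (b). Recall also from Remark \ref{phibiendefinida} that $a\rs b$ gives $\rp(a)\leq\rp(b)$, so $\rp(a)\rp(b)=\rp(a)=\rp(b)\rp(a)$. Consequently $\rp(a)b_r=\rp(a)\rp(b)b=\rp(a)b$. Likewise $b_r\rp(a)=\rp(b)b\rp(a)=\rp(b)a$.

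For (a), the identity reduces to $\rp(a)b\rp(a)=\rp(a)b$, since $\rp(a)b_r\rp(a)=\rp(a)b\rp(a)$ by the first simplification. By Lemma \ref{Lemma:propiedadesdellpyrp}(\ref{propiedadesdellpyrp}), $a^2=ab$ gives $\rp(a)a=\rp(a)b$. Then
\[
\rp(a)b\rp(a)=\rp(a)a\rp(a)=\rp(a)a=\rp(a)b,
\]
using $a\rp(a)=a$ from Lemma \ref{Lemma:propiedadesdellpyrp}(\ref{prop0}). This gives (a).

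For (b), the left side is $\rp(a)b\rp(a)=\rp(a)a$, since $b\rp(a)=a$. The right side is $b_r\rp(a)=\rp(b)a$. So I must show $\rp(a)a=\rp(b)a$. Using $a^2=ba$ and $a=b\rp(a)$, I would try to write $\rp(b)a=\rp(b)b\rp(a)$ and compare it with $\rp(a)b\rp(a)$.

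The step I expect to be the obstacle is here: it is not obvious that $\rp(b)$ can be replaced by $\rp(a)$. The idea is to show that $(\rp(b)-\rp(a))a=0$. Since $a^2=ba$, we have $(a-b)a=0$. Multiplying by $\rp(a)$ on the right only recovers $a=b\rp(a)$, which is not enough.

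The alternative is to use $a^2=ba$ in the form $\rp(b)a^2=\rp(b)ba$ and then try to cancel. Cancellation in Rickart rings works through annihilators. From $ba=a^2=a\cdot a$ and $a=b\rp(a)$, we get $b(a-\rp(a)a)=ba-a\rp(a)\cdot$ — which should simplify via $a\rp(a)=a$. Indeed $b\rp(a)a=aa=a^2=ba$, so $b(1-\rp(a))a=0$. Hence $(1-\rp(a))a\in b^\circ=(1-\rp(b))\mathcal A$, and therefore $\rp(b)(1-\rp(a))a=0$. Since $\rp(b)\rp(a)=\rp(a)$, this gives $\rp(b)a=\rp(a)a$, which proves (b).
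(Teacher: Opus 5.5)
Your proof is correct and is essentially the paper's argument. In (a), Lemma~\ref{Lemma:propiedadesdellpyrp}(\ref{propiedadesdellpyrp}) is exactly the cancellation through $a^\circ=\rp(a)^\circ$ that the paper carries out; in (b), your identity $b(1-\rp(a))a=0$ is the paper's $b\bigl(\rp(a)b\rp(a)-b\rp(a)\bigr)=0$ up to sign (since $b\rp(a)=a$), cancelled in the same way via $b^\circ=(1-\rp(b))\mathcal{A}$ and $\rp(b)\rp(a)=\rp(a)$. In a final write-up, drop the exploratory remarks in (b) and the garbled expression ``$ba-a\rp(a)\cdot$'', because the substitution actually needed there is $b\rp(a)=a$, not $a\rp(a)=a$.
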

\begin{proof}  
 We begin by proving $(a)$. From  $a^2 = ab$ and $a = b\rp(a)=a\rp(b)$ we obtain $a^2 =(a\rp(b) )(b \rp(a)) = (a\rp(b)) b $. Then, $\rp(b) b \rp(a)-\rp(b) b \in a^\circ=\rp(a)^\circ$, which implies that $\rp(a)\rp(b) b \rp(a)=\rp(a)\rp(b) b$. That is, $\rp(a)b_r \rp(a)=\rp(a)b_r$.

Let us now prove $(b)$. Since  $a^2 = ba$ and $a = b\rp(a)$ then $a^2=(b\rp(a))(b \rp(a)) = b^2\rp(a) $. Thus 
 $\rp(a) b \rp(a)-b\rp(a) \in b^\circ=\rp(b)^\circ$, which implies that  $\rp(b)\rp(a) b \rp(a)=\rp(b)b\rp(a)$. So, $\rp(a)\rp(b) b \rp(a)=\rp(b)b\rp(a)$, that is, $\rp(a) b_r \rp(a) =b_r\rp(a)$. 
\end{proof}

\begin{lemma}\label{AuxiliarIso2}
Let $\mathcal{A}$ be a Rickart *-ring, $b\in \mathcal{A}$, $p,p_1,p_2\in E^{\irs}_{b}$ be such that $p_1 \leq p_2$. 
\begin{enumerate}[(a)]
 \item\label{condicion lc para bp} If $pb_r p=pb_r$ then $(bp)^2 = bpb$. 
 \item\label{condicion rc para bp} If $pb_r p=b_rp$ then $(bp)^2 = b^2p$. 
 \item\label{condicion lc para conp1p2} If $p_1b_r p_1 = p_1b_r$ then $(bp_1)^2 = bp_1bp_2$. 
 \item\label{condicion rc para conp1p2} If $p_1b_r p_1 = b_rp_1$ then $(bp_1)^2 = bp_2bp_1$.
\end{enumerate}
\end{lemma}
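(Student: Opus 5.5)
The plan is a direct computation from the hypotheses $p\le \rp(b)$ (so $p=p\rp(b)=\rp(b)p$) and $b_r=\rp(b)b$, together with $b\rp(b)=b$ from Lemma \ref{Lemma:propiedadesdellpyrp}(\ref{prop0}). The basic move in every item is to insert $\rp(b)$ next to $p$ so that $b_r$ appears, apply the hypothesis, and then remove $\rp(b)$ again.

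For (a), I would write
\[(bp)^2=b\,p\,b\,p=b\,p\rp(b)\,b\,p=b\,(p b_r p).\]
The hypothesis $pb_rp=pb_r$ then gives $(bp)^2=b\,p b_r=b\,p\rp(b) b=bpb$.

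For (b), I would compute $b_r p=\rp(b)bp$ and use $p=\rp(b)p$:
\[(bp)^2=b\,pbp=b\,p\rp(b)bp=b\,(pb_rp)=b\,b_rp=b\rp(b)bp=b^2p.\]

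For (c) and (d), I would use $p_1\le p_2$, i.e. $p_1=p_1p_2=p_2p_1$. The natural route for (c) is
\[bp_1bp_2=b\,p_1\rp(b)b\,p_2=b\,(p_1b_r)p_2.\]
By hypothesis $p_1b_r=p_1b_rp_1$, so this equals $b\,p_1b_rp_1p_2=b\,p_1b_rp_1$, which is $(bp_1)^2$ by the computation in (a).

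For (d), I would compute
\[bp_2bp_1=b\,p_2\rp(b)bp_1=b\,p_2(b_rp_1).\]
Replacing $b_rp_1$ by $p_1b_rp_1$ gives $b\,p_2p_1b_rp_1=b\,p_1b_rp_1$, which is again $(bp_1)^2$.

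I do not expect a real obstacle here. The only care needed is at the single step in each item where I must check which side of $p$ (or $p_1$, $p_2$) the projection $\rp(b)$ is inserted on, so that $b_r=\rp(b)b$ really appears. That check relies on the self-adjointness of the projections, which makes $p\rp(b)=p$ equivalent to $\rp(b)p=p$.
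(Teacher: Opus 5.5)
Your proposal is correct and follows essentially the same route as the paper: insert $\rp(b)$ to the right of $p$ (using $p=p\rp(b)$) so that $b_r=\rp(b)b$ appears, apply the hypothesis, then remove $\rp(b)$ again via $b\rp(b)=b$. The only difference is in (c) and (d). There you start from $bp_1bp_2$ and $bp_2bp_1$ respectively and reduce both to $bp_1b_rp_1=(bp_1)^2$, whereas the paper runs the same chain of equalities starting from $(bp_1)^2$.
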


\begin{proof}  
 $(a)$ From $p=p\rp(b)$ and $pb_r p=pb_r$, we have $(bp)^2 = bp\rp(b)bp = bp b_r p =bp b_r= bp\rp(b)b=bpb$.

$(b)$  From $p=p\rp(b)$, $pb_r p=b_rp$ and $b\rp(b)=b$, it follows that $(bp)^2 = bpbp = bp\rp(b)bp=bp b_rp= bb_rp=b\rp(b)bp=b^2p$.

$(c)$ From  $p_1 =p_1\rp(b)$, $p_1b_r p_1 = p_1b_r$ and $p_1 = p_1p_2$, we have:
\[
(bp_1)^2=bp_1bp_1 = bp_1\rp(b)bp_1=bp_1b_rp_1p_2=bp_1b_rp_2=bp_1\rp(b)bp_2=bp_1bp_2.
\]

$(d)$ From  $p_i =p_i\rp(b)$, $p_1b_r p_1 = b_rp_1$ and $p_1 = p_2p_1$, we obtain:
\[
(bp_1)^2=bp_1bp_1 = bp_2p_1bp_1=bp_2p_1\rp(b)bp_1=bp_2p_1b_rp_1=bp_2b_rp_1=bp_2\rp(b)bp_1=bp_2bp_1.
\qedhere\]
\end{proof}

\begin{definition}\label{dfn:Ers} Let $\mathcal{A}$ be a Rickart *-ring and $b\in \mathcal{A}$. We define the posets
\begin{align*}
E^{\irslc}_{b} &=\{p\in E(\mathcal A ): p\leq \rp(b) \text{ and } pb_r p=pb_r\},\\ 
E^{\irsrc}_{b} &=\{p\in E(\mathcal A ): p\leq \rp(b) \text{ and } pb_r p=b_r p\},\\
E^{\irsc}_{b} &=\{p\in E(\mathcal A ): p\leq \rp(b) \text{ and } pb_r= b_r p\},
\end{align*}
all of them ordered by $\leq $. 
\end{definition}

We now establish an order-isomorphism between the down-set of an element $b$ and the poset $E_b^{\alpha}$, for each order $\stackrel{_{\alpha}}\preceq$ where $\alpha\in \{\crslc, \crsrc, \crsc\}$. 

\begin{theorem}\label{thm:isodecreRestrellaConCuadrados} Let $\mathcal{A}$ be a Rickart *-ring and $b\in\mathcal{A}$. Then: 
\begin{enumerate}[(a)]
 \item\label{isorslc}  $[0, b]^{\irslc} $ is isomorphic to $E^{\irslc}_{b}$. 
 \item\label{isorsrc}  $[0, b]^{\irsrc} $ is isomorphic to $E^{\irsrc}_{b}$.
 \item\label{isorsc}  $[0, b]^{\irsc} $ is isomorphic to $E^{\irsc}_{b}$.  
\end{enumerate}
 
\end{theorem}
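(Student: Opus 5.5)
The plan is to restrict the maps $\phi$ and $\psi$ of Theorem~\ref{thm:isodecrecienteRestrella}, namely $\phi(a)=\rp(a)$ and $\psi(p)=bp$, to the smaller posets in each case. These maps are already mutually inverse, order-preserving bijections between $[0,b]^{\irs}$ and $E^{\irs}_b$, with respect to $\rs$ and $\leq$.

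\textbf{Reduction.} Each of $[0,b]^{\alpha}$ is a subset of $[0,b]^{\irs}$ by (\ref{rsconcuadradosimplicanrs}), and each $E^{\alpha}_b$ is a subset of $E^{\irs}_b$. So it suffices to prove two things:
\begin{enumerate}[(i)]
\item $\phi$ maps $[0,b]^{\alpha}$ into $E^{\alpha}_b$ and $\psi$ maps $E^{\alpha}_b$ into $[0,b]^{\alpha}$;
\item the restricted maps are order-preserving for $\stackrel{\alpha}{\preceq}$ and $\leq$.
\end{enumerate}
Bijectivity and mutual inverseness are then inherited from Theorem~\ref{thm:isodecrecienteRestrella}.

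\textbf{Well-definedness.} Take $\alpha=\crslc$.
\begin{itemize}
\item If $a\rslc b$, then $\rp(a)\leq\rp(b)$ by Remark~\ref{phibiendefinida}, and $\rp(a)b_r\rp(a)=\rp(a)b_r$ by Lemma~\ref{AuxiliarIso1}(\ref{rslc rpbr}). Hence $\phi(a)\in E^{\irslc}_b$.
\item If $p\in E^{\irslc}_b$, then $bp\rs b$ by the known result, and $(bp)^2=bpb$ by Lemma~\ref{AuxiliarIso2}(\ref{condicion lc para bp}). Hence $bp\rslc b$.
\end{itemize}
The case $\crsrc$ is symmetric, using part (b) of each lemma. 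For $\crsc$ I use (\ref{rsc iff rslc rsrc}): $a\rsc b$ gives both identities $\rp(a)b_r\rp(a)=\rp(a)b_r$ and $\rp(a)b_r\rp(a)=b_r\rp(a)$, hence $\rp(a)b_r=b_r\rp(a)$. Conversely, $pb_r=b_rp$ gives $pb_rp=pb_r=b_rp$, so both conditions hold and $bp\rsc b$. In other words, $E^{\irsc}_b=E^{\irslc}_b\cap E^{\irsrc}_b$.

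\textbf{Order preservation.} $\phi$ preserves order trivially: $a_1\stackrel{\alpha}{\preceq}a_2$ implies $a_1\rs a_2$, hence $\rp(a_1)\leq\rp(a_2)$ by Remark~\ref{phibiendefinida}.

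For $\psi$, take $p_1\leq p_2$ in $E^{\alpha}_b$. Theorem~\ref{thm:isodecrecienteRestrella} gives $bp_1\rs bp_2$, so only the square condition remains.
\begin{itemize}
\item For $\crslc$, Lemma~\ref{AuxiliarIso2}(\ref{condicion lc para conp1p2}) gives exactly $(bp_1)^2=bp_1bp_2$.
\item For $\crsrc$, Lemma~\ref{AuxiliarIso2}(\ref{condicion rc para conp1p2}) gives $(bp_1)^2=bp_2bp_1$.
\item For $\crsc$, both apply, since $p_1$ satisfies both conditions.
\end{itemize}

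\textbf{Main obstacle.} There is essentially none beyond checking that Lemma~\ref{AuxiliarIso2}(c),(d) only require the condition on $p_1$, which they do. The one point needing care is the $\crsc$ case: confirming that $E^{\irsc}_b$ is precisely $E^{\irslc}_b\cap E^{\irsrc}_b$, so that the two one-sided arguments combine. The reduction to the known isomorphism handles everything else.
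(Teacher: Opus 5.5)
Your proposal is correct and follows the paper's proof essentially step for step. You restrict C\={\i}rulis's maps $\phi(a)=\rp(a)$ and $\psi(p)=bp$, check well-definedness via Lemma~\ref{AuxiliarIso1} and Lemma~\ref{AuxiliarIso2}(a),(b), get order preservation of $\psi$ from Lemma~\ref{AuxiliarIso2}(c),(d), and obtain the $\crsc$ case from $[0,b]^{\irsc}=[0,b]^{\irslc}\cap[0,b]^{\irsrc}$ together with $E^{\irsc}_b=E^{\irslc}_b\cap E^{\irsrc}_b$, which you verify explicitly where the paper leaves it to the reader.
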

\begin{proof}
In order to prove the isomorphism in $(\ref{isorslc})$, we consider the following restrictions of the maps $\phi$ and $\psi$ defined above: $\overline{\phi}\colon [0, b]^{\irslc} \to E^{\irslc}_{b}$ and 
$\overline{\psi}\colon E^{\irslc}_{b} \to [0, b]^{\irslc}$.
It suffices to show that $\overline{\phi}$ and $\overline{\psi}$ are well-defined, order-preserving and mutually inverse, and hence bijective. By Theorem \ref{thm:isodecrecienteRestrella}, this reduces to verifying that both maps are well-defined and that $\overline{\psi}$ is order-preserving. Let us start by proving the first.
\begin{itemize}
    \item[(i)] If $a \rslc b$ then, by the definition of $\overline{\phi}$, we only need to show that $\rp(a) b_r \rp(a) =\rp(a) b_r$, which follows from Lemma \ref{AuxiliarIso1}$(\ref{rslc rpbr})$.
 
    \item[(ii)] Since $\overline{\psi}$ is the restriction of $\psi$ to the poset $E^{\irslc}_{b}$, what is left is to show that, if $p\in E^{\irslc}_{b}$ then $(bp)^2 = bpb$. This is a direct consequence of Lemma \ref{AuxiliarIso2}$(\ref{condicion lc para bp})$, since by our assumption $p=p\rp(b)$ and $pb_r p=pb_r$.
 
\end{itemize}
To complete the proof of $(\ref{isorslc})$ we now show that $\overline{\psi}$ is order-preserving. Let us consider $p_1,p_2\in E^{\irslc}_{b}$ such that $p_1 \leq p_2$. Thus, we have $bp_1\rs bp_2$ by the definition of $\overline{\psi}$. It remains to prove that $(bp_1)^2 = bp_1bp_2$. But this follows from Lemma \ref{AuxiliarIso2}$(\ref{condicion lc para conp1p2})$, since by hypothesis we have $p_1 =p_1\rp(b)$ and $p_1b_r p_1 = p_1b_r$.

To prove the isomorphism in $(\ref{isorsrc})$ we consider the following restrictions of the maps $\phi$ and $\psi$ defined above:  
$\overline{\phi}\colon [0, b]^{\irsrc} \to E^{\irsrc}_{b}$ and 
$\overline{\psi}\colon E^{\irsrc}_{b} \to [0, b]^{\irsrc}$.

As in $(a)$, we begin by proving that $\overline{\phi}$ and $\overline{\psi}$ are well-defined.
\begin{itemize}
    \item[(i)] If $a \rsrc b$, then the definition of $\overline{\phi}$ reduces the claim to showing that $\rp(a) b_r \rp(a) = b_r \rp(a)$, which is a consequence of Lemma \ref{AuxiliarIso1}$(\ref{rsrc rpbr})$.
     
\item[(ii)] Since $\overline{\psi}$ is the restriction of $\psi$ to the poset $E^{\irsrc}_{b}$, it remains to show that, for $p\in E^{\irsrc}_{b}$, $(bp)^2 = b^2p$. This follows immediately from Lemma \ref{AuxiliarIso2}$(\ref{condicion rc para bp})$, since by our assumption $p=p\rp(b)$ and $pb_r p=b_r p$.

\end{itemize}
To prove that $\overline{\psi}$ is order-preserving, it only remains to show that if $p_1 \leq p_2$ then $(bp_1)^2 = bp_2bp_1$, which follows from Lemma \ref{AuxiliarIso2}$(\ref{condicion rc para conp1p2})$.

 Finally, we prove (\ref{isorsc}). By (\ref{rsc iff rslc rsrc}), we know that $[0, b]^{\irsc}=[0, b]^{\irslc}\cap [0, b]^{\irsrc}$, and it is not difficult to see that $E^{\irsc}_{b}= E^{\irslc}_{b}\cap E^{\irsrc}_{b}$. Then the restrictions maps $\overline{\phi}\colon [0, b]^{\irsc} \to E^{\irsc}_{b}$ and 
$\overline{\psi}\colon E^{\irsc}_{b} \to [0, b]^{\irsc}$ are well-defined and they are clearly mutually inverse isomorphisms by statements $(\ref{isorslc})$ and $(\ref{isorsrc})$.
\end{proof}

As immediate consequences of the isomorphisms established in Theorem \ref{thm:isodecreRestrellaConCuadrados}, we obtain the following characterizations of the elements $a$ lying below a given element $b$ with respect to the orders $\rslc$, $\rsrc$ and $\rsc$.

\begin{corollary}\label{cor:charactbelowrs} Let $\mathcal{A}$ be a Rickart *-ring and $a,b\in \mathcal{A}$.
\begin{enumerate}[(a)]
 \item\label{coritem:charactbelowrslc} $a\rslc b$ if and only if there exists $p\in E(\mathcal{A})$ such that $a= bp$, $p\leq \rp(b)$ and $pb_rp=pb_r$.
 \item $a\rsrc b$ if and only if there exists $p\in E(\mathcal{A})$ such that $a= bp$, $p\leq \rp(b)$ and $pb_rp=b_rp$.
 \item $a\rsc b$ if and only if there exists $p\in E(\mathcal{A})$ such that $a= bp$, $p\leq \rp(b)$ and $pb_r=b_rp$.
\end{enumerate}
Moreover, in each case, the element $p$ is unique.
 \end{corollary}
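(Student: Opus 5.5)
The plan is to read the corollary directly off Theorem \ref{thm:isodecreRestrellaConCuadrados}, using the explicit mutually inverse maps $\overline{\phi}(a)=\rp(a)$ and $\overline{\psi}(p)=bp$ constructed in its proof. I would treat item (a) in detail and then note that (b) and (c) follow verbatim by replacing $E^{\irslc}_{b}$ with $E^{\irsrc}_{b}$ or $E^{\irsc}_{b}$.

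For the forward direction of (a), suppose $a\rslc b$. Take $p=\rp(a)$. Then $p\in E(\mathcal{A})$, and $p\leq \rp(b)$ by Remark \ref{phibiendefinida}, since $a\rs b$ by (\ref{rsconcuadradosimplicanrs}). The identity $pb_rp=pb_r$ is Lemma \ref{AuxiliarIso1}(\ref{rslc rpbr}). Finally, $a=b\rp(a)=bp$ holds by the definition of $\rs$.

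For the converse, let $p\in E(\mathcal{A})$ satisfy $p\leq\rp(b)$ and $pb_rp=pb_r$. Then $p\in E^{\irslc}_{b}$, so $\overline{\psi}(p)=bp\in[0,b]^{\irslc}$ by part (ii) of the proof of Theorem \ref{thm:isodecreRestrellaConCuadrados}(\ref{isorslc}). If one prefers a direct check, it comes down to three facts:
\begin{itemize}
\item $bp\,\rp(b)=bp$;
\item $b\rp(bp)=bp$, using $\rp(bp)=p$ from Lemma \ref{Lemma:propiedadesdellpyrp}(\ref{prop:rpapIgualp});
\item $(bp)^2=bp\,b$, from Lemma \ref{AuxiliarIso2}(\ref{condicion lc para bp}).
\end{itemize}
Hence $a=bp\rslc b$.

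For uniqueness, suppose $a=bp$ with $p$ as in the statement. Since $p\leq\rp(b)$, Lemma \ref{Lemma:propiedadesdellpyrp}(\ref{prop:rpapIgualp}) gives $\rp(a)=\rp(bp)=p$. So $p$ is forced to equal $\rp(a)$; equivalently, $\overline{\phi}\circ\overline{\psi}$ is the identity.

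For (b), use Lemma \ref{AuxiliarIso1}(\ref{rsrc rpbr}) and Lemma \ref{AuxiliarIso2}(\ref{condicion rc para bp}) in place of their (a)-counterparts. For (c), use (\ref{rsc iff rslc rsrc}) together with the fact that $E^{\irsc}_{b}=E^{\irslc}_{b}\cap E^{\irsrc}_{b}$. The only point that needs a moment's care is this last equality. If $pb_r=b_rp$, then $pb_rp=b_rp^2=b_rp=pb_r$, so both conditions hold. Conversely, if $pb_rp=pb_r$ and $pb_rp=b_rp$, then $pb_r=b_rp$. The rest is bookkeeping, so I do not expect any real obstacle.
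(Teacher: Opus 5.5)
Your proposal is correct and follows the paper's route: the paper also reads the corollary off Theorem \ref{thm:isodecreRestrellaConCuadrados} through the explicit mutually inverse maps $\overline{\phi}(a)=\rp(a)$ and $\overline{\psi}(p)=bp$, and uniqueness comes from $\rp(bp)=p$ (Lemma \ref{Lemma:propiedadesdellpyrp}(\ref{prop:rpapIgualp})). Your direct checks of the converse and of the equality $E^{\irsc}_{b}=E^{\irslc}_{b}\cap E^{\irsrc}_{b}$ just spell out details the paper leaves implicit.
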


We show that the partial order $\rslc$, when restricted to the set $\mathcal {A}_{\oplus}$ of dual core invertible elements in Rickart *-rings, coincides with the dual core partial order $\leq_\oplus$ (see (\ref{dualcoreequalrslc})). As a consequence, Corollary \ref{cor:charactbelowrs}(\ref{coritem:charactbelowrslc}) provides a characterization of the elements $a\in\mathcal{A}_{\oplus}$ satisfying $a\leq_\oplus b$ for a given $b\in\mathcal{A}_{\oplus}$.

\begin{corollary}\label{cor:caracterizaciondualcore} Let $\mathcal{A}$ be a Rickart *-ring and $a,b\in \mathcal{A}_{\oplus}$. Then, $a\leq_{\oplus }b$ if and only if there exists (a unique) $p\in E(\mathcal{A})$ such that $a= bp$, $p\leq \rp(b)$ and $pb_rp=pb_r$. 
\end{corollary}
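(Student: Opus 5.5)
The plan is to chain equivalence (\ref{dualcoreequalrslc}) with Corollary \ref{cor:charactbelowrs}(\ref{coritem:charactbelowrslc}). Let $a,b\in\mathcal{A}_\oplus$. The first step uses (\ref{dualcoreequalrslc}) to rewrite the hypothesis: $a\leq_\oplus b$ holds if and only if $a\rslc b$. The second step applies Corollary \ref{cor:charactbelowrs}(\ref{coritem:charactbelowrslc}), which holds for arbitrary elements of a Rickart *-ring. It gives that $a\rslc b$ holds if and only if there is $p\in E(\mathcal{A})$ with $a=bp$, $p\leq\rp(b)$ and $pb_rp=pb_r$. Composing the two equivalences yields the stated characterization.

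For uniqueness I would invoke the final sentence of Corollary \ref{cor:charactbelowrs}, or argue directly. Suppose $p$ satisfies the three conditions. Then $a=\overline{\psi}(p)$ for the isomorphism of Theorem \ref{thm:isodecreRestrellaConCuadrados}(\ref{isorslc}). Since $\overline{\phi}$ is its inverse, $p=\overline{\phi}(a)=\rp(a)$. Concretely, $\rp(bp)=p$ follows from Lemma \ref{Lemma:propiedadesdellpyrp}(\ref{prop:rpapIgualp}) because $p\leq\rp(b)$. So $p$ is forced to equal $\rp(a)$.

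The only point requiring care is the direction from $p$ back to $\leq_\oplus$. There we have $a=bp$ with $a,b$ assumed dual core invertible. Corollary \ref{cor:charactbelowrs} gives $a\rslc b$, and (\ref{dualcoreequalrslc}) then applies because both elements lie in $\mathcal{A}_\oplus$ by hypothesis. No real obstacle arises. The substantive work is already in establishing (\ref{dualcoreequalrslc}), namely the translations $a_\oplus a=a_\oplus b\iff a=b\rp(a)$, $aa_\oplus=ba_\oplus\iff a^2=ab$, and $\mathcal{A}a\subseteq\mathcal{A}b\iff a=a\rp(b)$, and that was done before the statement.
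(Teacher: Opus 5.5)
Your proposal is correct and follows the paper's own argument: the paper obtains this corollary by composing (\ref{dualcoreequalrslc}) with Corollary \ref{cor:charactbelowrs}(\ref{coritem:charactbelowrslc}), and uniqueness comes from $p=\rp(bp)=\rp(a)$. One harmless aside: the individual translations you quote (taken from the paper) are paired the wrong way round. Since $a_\oplus a=a^\dag a$ and $aa_\oplus=aa^\sharp$, one actually has $a_\oplus a=a_\oplus b\iff a^2=ab$ and $aa_\oplus=ba_\oplus\iff a=b\rp(a)$; only their conjunction is used, so (\ref{dualcoreequalrslc}) and your argument are unaffected.
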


\begin{remark}\label{remark:dualcoreIA}In \cite{Ra}, a generalization of the dual core partial order is introduced on the set $\mathcal{I}_\mathcal{A} = \{a\in \mathcal{A}: a^\circ=p^\circ \text { and } \prescript{\circ}{}a= \prescript{\circ}{}p\text{ for some idempotent } p\in \mathcal{A} \}$ of a Rickart $*$-ring $\mathcal{A}$. Subsequently, \cite[Corollary 4.7]{Marovt2} provides a characterization of the elements lying below a given element $b$ with respect to this order. This characterization is closely related to those obtained in Corollary \ref{cor:charactbelowrs}(\ref{coritem:charactbelowrslc}) and Corollary \ref{cor:caracterizaciondualcore}.
\end{remark}

We now establish analogues to Theorem \ref{thm:isodecreRestrellaConCuadrados} for the partial orders $\lslc$, $\lsrc$ and $\lsc$. In addition, we prove an isomorphism theorem for the order $\ls$, analogous to \cite[Theorem 4.2]{Cirulis1} for $\rs$. To this end, we first introduce the ordered subsets of $E(\mathcal{A})$ that turn out to be order-isomorphic to the down-set of an element $b$ with respect to $\stackrel{_{\alpha}}\preceq$, for each $\alpha\in \{\ils, \clslc, \clsrc, \clsc\}$.  

\begin{definition}\label{dfn:Els} Let $\mathcal{A}$ be a Rickart *-ring and $b\in \mathcal{A}$. We denote $b \lp(b)$ by $b_l$, and we define the posets 
\begin{align*}
      E^{\ils}_{b} &=\{p\in E(\mathcal A ): p\leq \lp(b) \}, \\
 E^{\ilslc}_{b} &=\{p\in E(\mathcal A ): p\leq \lp(b) \text{ and } pb_l p=pb_l\},\\
   E^{\ilsrc}_{b} &=\{p\in E(\mathcal A ): p\leq \lp(b) \text{ and } pb_l p=b_l p\},\\
   E^{\ilsc}_{b} &=\{p\in E(\mathcal A ): p\leq \lp(b) \text{ and } pb_l= b_l p\},
   \end{align*}
   all of them ordered by $\leq$.
\end{definition}

By Lemma \ref{Lemma:propiedadesdellpyrp}$(\ref{prop:rpa*lpa})$, we know that $\lp(b)=\rp(b^*)$ and then $(b_l)^*= \lp(b)b^*= \rp(b^*)b^*=(b^*)_r$. The posets in Definition \ref{dfn:Els} and those in Definition \ref{dfn:Ers}, when considered with respect to the element $b^*$, are related as follows. 

\begin{lemma}\label{igualdadintervalosEbyEbestrella} Let $\mathcal{A}$ be a Rickart *-ring and $b\in \mathcal{A}$. Then:
\begin{enumerate}[(a)]
 \item\label{igualdadle}  $E^{\ils}_{b}=E^{\irs}_{b^*}$.
 \item\label{igualdadlelc}  $E^{\ilslc}_{b}=E^{\irsrc}_{b^*}$.
 \item\label{igualdadlerc}  $E^{\ilsrc}_{b}= E^{\irslc}_{b^*}$.
 \item\label{igualdadlec}  $E^{\ilsc}_{b}=E^{\irsc}_{b^*}$.

\end{enumerate} 
\end{lemma}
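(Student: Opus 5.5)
The plan is to compare the defining conditions of both sides directly, using $\lp(b)=\rp(b^*)$ from Lemma \ref{Lemma:propiedadesdellpyrp}$(\ref{prop:rpa*lpa})$ and the identity $(b_l)^*=(b^*)_r$ recorded just before the statement. Since every poset in question is a subset of $E(\mathcal A)$ ordered by the same relation $\leq$, equality of the underlying sets already gives equality of posets. Part $(\ref{igualdadle})$ is then immediate: $p\leq \lp(b)$ if and only if $p\leq \rp(b^*)$.

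For the remaining parts, fix $p\in E(\mathcal A)$ with $p\leq \lp(b)=\rp(b^*)$. The key observation is that, because $p$ is self-adjoint, applying the involution to an equation between products of $p$ and $b_l$ yields the reversed equation between products of $p$ and $(b_l)^*=(b^*)_r$. Applying $*$ is a bijection, so each such equation is equivalent to its adjoint.

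For $(\ref{igualdadlelc})$, I would take adjoints in $pb_lp=pb_l$ to get $p(b^*)_rp=(b^*)_rp$. This is exactly the defining condition of $E^{\irsrc}_{b^*}$, which explains why the left-hand $\ell$-condition corresponds to the right-hand $r$-condition. For $(\ref{igualdadlerc})$, $pb_lp=b_lp$ is equivalent to $p(b^*)_rp=p(b^*)_r$, the defining condition of $E^{\irslc}_{b^*}$. For $(\ref{igualdadlec})$, $pb_l=b_lp$ is equivalent to $(b^*)_rp=p(b^*)_r$. Alternatively, $(\ref{igualdadlec})$ follows from $(\ref{igualdadlelc})$ and $(\ref{igualdadlerc})$ by intersecting, since both $E^{\ilsc}_{b}$ and $E^{\irsc}_{b^*}$ are the intersections of the corresponding pairs of sets.

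There is no real obstacle. The only point needing care is getting the swap between the $\ell$ and $r$ conditions right when taking adjoints, and confirming $(b_l)^*=(b^*)_r$, which is already established in the text.
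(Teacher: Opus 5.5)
Your proposal is correct and follows the paper's own proof. Both rewrite each set using $\lp(b)=\rp(b^*)$ and $(b_l)^*=(b^*)_r$, then take adjoints of the defining equations (using $p=p^*$), which swaps the $\ell$ and $r$ conditions. Your form of (d), $(b^*)_rp=p(b^*)_r$, is the correct one (the paper's displayed set for (d) has a typo), and your alternative derivation of (d) by intersecting (b) and (c) is also valid.
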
 
\begin{proof}It is clear that $E^{\ils}_{b}=\{p\in E(\mathcal A ): p\leq \lp(b) \}=\{p\in E(\mathcal A ): p\leq \rp(b^*) \}=E^{\irs}_{b^*}$. Taking into account again that $\lp(b)=\rp(b^*)$, $(b_l)^*= (b^*)_r$ and properties of the involution *, we have that $E^{\ilslc}_{b}=\{p\in E(\mathcal A ): p\leq \lp(b) \text{ and } pb_l p=pb_l\} =\{p\in E(\mathcal A ): p\leq \rp(b^*) \text{ and } p(b^*)_r p=(b^*)_rp\}=E^{\irsrc}_{b^*}$. In the same way, $E^{\ilsrc}_{b}=\{p\in E(\mathcal A ): p\leq \lp(b) \text{ and } pb_l p=b_l p\}= \{p\in E(\mathcal A ): p\leq \rp(b^*) \text{ and } p(b^*)_r p=p(b^*)_r\}= E^{\irslc}_{b^*}$ and finally, $E^{\ilsc}_{b}=\{p\in E(\mathcal A ): p\leq \lp(b) \text{ and } pb_l= b_l p\}=\{p\in E(\mathcal A ): p\leq \rp(b^*) \text{ and } p(b^*)_r =p(b^*)_r\}=E^{\irsc}_{b^*}$.
\end{proof}

In the following theorem we establish the isomorphisms between the down-set of an element $b$ and the corresponding subset of $E(\mathcal A)$ for the orders $\ls$, $\lsrc$, $\lslc$ and $\lsc$. 

\begin{theorem}\label{thm:isodecrecientelestrella} Let $\mathcal{A}$ be a Rickart *-ring and $b$ be an element in $\mathcal{A}$. Then: 
\begin{enumerate}[(a)]
 \item  $[0, b]^{\ils}$ is isomorphic to $E^{\ils}_{b}$.
 \item  $[0, b]^{\ilsrc}$ is isomorphic to $E^{\ilsrc}_{b}$.
 \item  $[0, b]^{\ilslc}$ is isomorphic to $E^{\ilslc}_{b}$.
 \item  $[0, b]^{\ilsc}$ is isomorphic to $E^{\ilsc}_{b}$.
\end{enumerate} 
\end{theorem}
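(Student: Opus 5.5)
The plan is to obtain each isomorphism as a composition of isomorphisms already available, so that no new computation with $\lp$ is needed. For each $\alpha\in\{\ils,\clsrc,\clslc,\clsc\}$, let $\beta$ be its counterpart under the involution: $\beta=\irs,\crslc,\crsrc,\crsc$, respectively. Then I compose
\[
[0,b]^{\alpha}\;\simeq\;[0,b^*]^{\beta}\;\simeq\;E^{\beta}_{b^*}\;=\;E^{\alpha}_{b}.
\]
The first isomorphism is Lemma \ref{isomorfismosintervalosbybestrella}, given by $a\mapsto a^*$. The second is Theorem \ref{thm:isodecrecienteRestrella} in case $(a)$ and Theorem \ref{thm:isodecreRestrellaConCuadrados} in cases $(b)$--$(d)$, applied to the element $b^*$. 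The final equality is Lemma \ref{igualdadintervalosEbyEbestrella}, and it holds as posets because both sides carry the order $\leq$ of $E(\mathcal A)$.

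Concretely, case $(b)$ uses Lemma \ref{isomorfismosintervalosbybestrella}(\ref{isointerlerc}), then Theorem \ref{thm:isodecreRestrellaConCuadrados}(\ref{isorslc}) for $b^*$, then Lemma \ref{igualdadintervalosEbyEbestrella}(\ref{igualdadlerc}). Case $(c)$ uses (\ref{isointerlelc}), then (\ref{isorsrc}), then (\ref{igualdadlelc}). Case $(d)$ uses (\ref{isointerlec}), then (\ref{isorsc}), then (\ref{igualdadlec}). A composition of order-isomorphisms is an order-isomorphism, so this proves all four statements.

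For later use I would also write the composite map explicitly. Under the second isomorphism an element $x\in[0,b^*]^{\beta}$ goes to $\rp(x)$. Taking $x=a^*$ gives $\rp(a^*)=\lp(a)$ by Lemma \ref{Lemma:propiedadesdellpyrp}(\ref{prop:rpa*lpa}). So the map $[0,b]^\alpha\to E^\alpha_b$ is $a\mapsto \lp(a)$. Its inverse sends $p$ to $(b^*p)^*=pb$. This also gives the left-hand analogue of Corollary \ref{cor:charactbelowrs}: $a\stackrel{\alpha}{\preceq}b$ if and only if $a=pb$ for a unique $p\in E^\alpha_b$.

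There is no real obstacle. The only point needing care is matching the indices correctly: the involution swaps the $\ell\square$ and $r\square$ conditions, so $\clsrc$ corresponds to $\crslc$ and $\clslc$ corresponds to $\crsrc$. I would check that this swap agrees across Lemma \ref{isomorfismosintervalosbybestrella} and Lemma \ref{igualdadintervalosEbyEbestrella}. It does, since both lemmas pair $(b)$ with $\irslc$ and $(c)$ with $\irsrc$.
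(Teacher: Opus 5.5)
Your proposal is correct and is exactly the paper's proof. In each case it uses the chain $[0,b]^{\alpha}\simeq[0,b^*]^{\beta}\simeq E^{\beta}_{b^*}=E^{\alpha}_{b}$, built from Lemma~\ref{isomorfismosintervalosbybestrella}, then Theorem~\ref{thm:isodecrecienteRestrella} or Theorem~\ref{thm:isodecreRestrellaConCuadrados} applied to $b^*$, then Lemma~\ref{igualdadintervalosEbyEbestrella}, with the same index swap $\clsrc\leftrightarrow\crslc$, $\clslc\leftrightarrow\crsrc$. Your explicit composite $a\mapsto \rp(a^*)=\lp(a)$, with inverse $p\mapsto (b^*p)^*=pb$, is also correct, and it is what yields Corollary~\ref{cor:charactbelowlscuadrados}.
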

\begin{proof} 
 \begin{enumerate}[(a)]
  \item From Lemma \ref{isomorfismosintervalosbybestrella}$(\ref{isointerle})$, Theorem \ref{thm:isodecrecienteRestrella} and Lemma \ref{igualdadintervalosEbyEbestrella}$(\ref{igualdadle})$, we have that
  \[[0, b]^{\ils} \simeq [0, b^*]^{\irs}\simeq E_{b^*}^{\irs}= E_{b}^{\ils}.\]
  \item In view of Lemma \ref{isomorfismosintervalosbybestrella}$(\ref{isointerlerc})$, Theorem \ref{thm:isodecreRestrellaConCuadrados}$(\ref{isorslc})$ and Lemma \ref{igualdadintervalosEbyEbestrella}$(\ref{igualdadlerc})$, it follows that
  \[[0, b]^{\ilsrc} \simeq [0, b^*]^{\irslc}\simeq E_{b^*}^{\irslc} =  E_{b}^{\ilsrc}.\]
  \item From Lemma \ref{isomorfismosintervalosbybestrella}$(\ref{isointerlelc})$, Theorem \ref{thm:isodecreRestrellaConCuadrados}$(\ref{isorsrc})$ and Lemma \ref{igualdadintervalosEbyEbestrella}$(\ref{igualdadlelc})$ we could deduce that 
  \[[0, b]^{\ilslc} \simeq [0, b^*]^{\irsrc}\simeq E_{b^*}^{\irsrc} =  E_{b}^{\ilslc}.\]
\item Finally, \[[0, b]^{\ilsc} \simeq [0, b^*]^{\irsc}\simeq E_{b^*}^{\irsc} =  E_{b}^{\ilsc}, \]
using Lemma \ref{isomorfismosintervalosbybestrella}$(\ref{isointerlec})$, Theorem \ref{thm:isodecreRestrellaConCuadrados}$(\ref{isorsc})$ and Lemma \ref{igualdadintervalosEbyEbestrella}$(\ref{igualdadlec})$.  \qedhere
 \end{enumerate}
\end{proof}

As an immediate consequence of Theorem \ref{thm:isodecrecientelestrella}, we obtain the following characterization of the elements lying below a given element $b$. 

\begin{corollary}\label{cor:charactbelowlscuadrados} Let $\mathcal{A}$ be a Rickart *-ring and $a,b\in \mathcal{A}$. 
\begin{enumerate}[(a)]
\item $a\ls b$ if and only if there exists $p\in E(\mathcal{A})$ such that $a= pb$, $p\leq \lp(b)$. 
\item\label{coritem:charactbelowlslc} $a\lsrc b$ if and only if there exists $p\in E(\mathcal{A})$ such that $a= pb$, $p\leq \lp(b)$ and $pb_lp=b_lp$.
 \item $a\lslc b$ if and only if there exists $p\in E(\mathcal{A})$ such that $a= pb$, $p\leq \lp(b)$ and $pb_lp=pb_l$.
 \item $a\lsc b$ if and only if there exists $p\in E(\mathcal{A})$ such that $a= pb$, $p\leq \lp(b)$ and $pb_l=b_lp$.
 \end{enumerate}
 Moreover, in each case, the element $p$ is unique.
\end{corollary}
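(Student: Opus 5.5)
The plan is to read Corollary \ref{cor:charactbelowlscuadrados} off the isomorphisms of Theorem \ref{thm:isodecrecientelestrella}. For this we need the explicit maps, not just the existence of an isomorphism. Tracing through Lemma \ref{isomorfismosintervalosbybestrella} and Theorem \ref{thm:isodecreRestrellaConCuadrados}, the composite isomorphism $[0,b]^{\alpha}\to E_b^{\alpha}$ is
\[
a\mapsto a^*\mapsto \rp(a^*)=\lp(a).
\]
Its inverse is
\[
p\mapsto b^*p\mapsto (b^*p)^*=pb.
\]
Here we use $\lp(b)=\rp(b^*)$ from Lemma \ref{Lemma:propiedadesdellpyrp}(\ref{prop:rpa*lpa}) and the set equalities of Lemma \ref{igualdadintervalosEbyEbestrella}.

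For the forward direction in each item, suppose $a\stackrel{\alpha}{\preceq} b$ with $\alpha\in\{\ils,\clsrc,\clslc,\clsc\}$, and set $p=\lp(a)$. Then $p\in E_b^{\alpha}$, since it is the image of $a$ under the isomorphism. Moreover $a=pb$ by the very definition of $\ls$, which contains $\lp(a)b=a$. So $p$ satisfies $p\le \lp(b)$ together with the relevant extra condition ($pb_lp=b_lp$, $pb_lp=pb_l$, or $pb_l=b_lp$).

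For the converse, let $p\in E(\mathcal A)$ satisfy the listed conditions, so $p\in E^{\alpha}_b$. Its image $pb$ under the inverse isomorphism lies in $[0,b]^{\alpha}$, and $a=pb$ gives the claim. If one wants to avoid tracing the maps, the same argument can be done by dualizing: $a=pb$ is equivalent to $a^*=b^*p$. Lemma \ref{igualdadintervalosEbyEbestrella} turns the conditions on $p$ relative to $b_l$ into the corresponding conditions relative to $(b^*)_r$. Corollary \ref{cor:charactbelowrs}, together with the plain $\rs$ case from Theorem \ref{thm:isodecrecienteRestrella}, then gives $a^*\stackrel{\beta}{\preceq} b^*$ for the dual order $\beta$. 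Remarks \ref{ls isomorfo rs} and \ref{lsrc lsls isomorfos lslc lsrc} transfer this back to $a\stackrel{\alpha}{\preceq} b$.

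For uniqueness, suppose $a=pb$ with $p\le\lp(b)$. We would show $p=\lp(a)$, which holds by applying Lemma \ref{Lemma:propiedadesdellpyrp}(\ref{prop:rpapIgualp}) to $b^*$: since $p\le\rp(b^*)$, we get $\rp(b^*p)=p$, that is, $\lp(pb)=p$. Equivalently, the map $p\mapsto pb$ is injective on $E_b^{\alpha}$ because it is a bijection.

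The only delicate step is making sure the abstract isomorphism chain really has $p\mapsto pb$ as its inverse, so that the representing element is $pb$ and not some other element. This is settled by the explicit computation $(b^*p)^*=pb$ above.
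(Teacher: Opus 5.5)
Your proposal is correct and follows the paper's route. The paper presents this corollary as an immediate consequence of Theorem \ref{thm:isodecrecientelestrella}, whose isomorphism is exactly the composite $a\mapsto a^*\mapsto \rp(a^*)=\lp(a)$ with inverse $p\mapsto b^*p\mapsto (b^*p)^*=pb$. Your explicit tracing of these maps, together with the direct uniqueness check $\lp(pb)=\rp(b^*p)=p$ via Lemma \ref{Lemma:propiedadesdellpyrp}(\ref{prop:rpapIgualp}) applied to $b^*$, simply fills in the details the paper leaves implicit.
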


Taking into account Corollary \ref{cor:charactbelowlscuadrados}(\ref{coritem:charactbelowlslc}) and (\ref{coreequallsrc}), we immediately obtain the following result.

\begin{corollary}\label{cor:caracterizacioncore} Let $\mathcal{A}$ be a Rickart *-ring and $a,b\in \mathcal{A}^{\oplus}$. Then,  $a\leq^{\oplus }b$ if and only if there exists (a unique) $p\in E(\mathcal{A})$ such that $a= pb$, $p\leq \lp(b)$ and $pb_lp=b_lp$. 
\end{corollary}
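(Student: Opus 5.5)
The statement is an immediate combination of two earlier results, and I would prove it by chaining them. Let $a,b\in\mathcal{A}^{\oplus}$. By (\ref{coreequallsrc}), which was established exactly for core invertible elements of a Rickart *-ring, we have $a\leq^{\oplus} b$ if and only if $a\lsrc b$. So the claim reduces to characterizing $a\lsrc b$ for arbitrary $a,b\in\mathcal{A}$. The core invertibility hypothesis is used only in this first step.

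For the second step I would invoke Corollary \ref{cor:charactbelowlscuadrados}(\ref{coritem:charactbelowlslc}). It gives $a\lsrc b$ if and only if there exists $p\in E(\mathcal{A})$ with $a=pb$, $p\leq \lp(b)$ and $pb_lp=b_lp$, and such a $p$ is unique. Composing the two equivalences yields the existence statement.

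For uniqueness, I would recall where it comes from. Corollary \ref{cor:charactbelowlscuadrados} rests on the isomorphism $[0,b]^{\ilsrc}\simeq E_b^{\ilsrc}$ of Theorem \ref{thm:isodecrecientelestrella}(b), so it suffices to check that $p$ is forced by $a$. Concretely, if $a=pb$ with $p\leq\lp(b)$, then $\lp(pb)=p$. This follows by transporting Lemma \ref{Lemma:propiedadesdellpyrp}(\ref{prop:rpapIgualp}) through the involution, using $\lp(b)=\rp(b^*)$ from Lemma \ref{Lemma:propiedadesdellpyrp}(\ref{prop:rpa*lpa}). Hence $p=\lp(a)$ is determined by $a$, and it is unique.

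I do not expect any real obstacle, since the substantive work was done in (\ref{coreequallsrc}) and Theorem \ref{thm:isodecrecientelestrella}. The only point to verify is that (\ref{coreequallsrc}) holds as a genuine equivalence on $\mathcal{A}^{\oplus}$ and not merely as one implication. It does: the text derives $a^\oplus a=a^\oplus b\iff a=\lp(a)b$, $aa^\oplus=ba^\oplus\iff a^2=ba$, and $a\mathcal{A}\subseteq b\mathcal{A}\iff a=\lp(b)a$. For the converse direction, the range inclusion is needed only to recover $a=\lp(b)a$, and that equality is part of the definition of $\lsrc$, so nothing further is required.
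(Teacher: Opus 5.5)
Your proof is correct and is exactly the paper's argument. The corollary is obtained by combining the equivalence (\ref{coreequallsrc}) on $\mathcal{A}^{\oplus}$ with Corollary \ref{cor:charactbelowlscuadrados}(\ref{coritem:charactbelowlslc}), with uniqueness inherited from that corollary; your extra check that $p=\lp(pb)=\rp(b^*p)=\lp(a)$ is a valid but redundant confirmation of uniqueness.
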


\begin{remark} In \cite{Ra}, a generalization of the core partial order on the set $\mathcal{I}_\mathcal{A}$ was introduced (see Remark \ref{remark:dualcoreIA}). Subsequently, \cite[Theorem 4.5]{Marovt2} provided a characterization of the elements lying below a given element $b$ with respect to this order. This characterization is closely related to the characterizations established in Corollary \ref{cor:charactbelowlscuadrados}(\ref{coritem:charactbelowlslc}) and Corollary \ref{cor:caracterizacioncore}.
\end{remark}

\begin{remark} At the end of Section \ref{sec:newpartialorders}, we relate the  partial orders $\lsc$, $\lslc$, $\lsrc$, $\rslc$ and $\rsrc$ to the sharp and one-sided sharp orders. In \cite[Theorem 13 and Theorem 14]{MaMi} and \cite[Theorem 3.3]{Marovt2}, the elements $a\in \mathcal{A}^\sharp$ lying below $b\in \mathcal{A}^{(1)}$ with respect to the left sharp, right sharp, and sharp partial orders are characterized in the form $a=pb$, where $p$ satisfies certain conditions depending on the partial order under consideration. In each case, one of these conditions involves the element $b_1= \lp(b)b\lp(b)$, which coincides with $b_l$, yielding conditions on $b_1$ analogous to those obtained in Corollary \ref{cor:charactbelowrs} and Corollary \ref{cor:charactbelowlscuadrados}.
\end{remark}

Now we investigate the posets $[0, b]^{\is}$, $[0, b]^{\isrc}$, $[0, b]^{\islc}$ and $[0, b]^{\isc} $, for an arbitrary element $b\in \mathcal {A}$. In \cite{Ci2},  C\={\i}rulis investigated the structure of the $*$-order in Rickart *-rings. In particular, he characterized the elements $a $ lying below a given element $b$ by proving that $ a \s b $ if and only if $ a = b \rp(a) $ and $\rp(a)$ commutes with $b^* b $ \cite[Theorem 3.3]{Ci2}. We first deepen this result by proving that the set of elements below $ b $, ordered by $\s$, is order-isomorphic to the subset of $ E(\mathcal{A})$, ordered by $ \leq$, consisting of those elements that are less than or equal to $\rp(b)$ and commute with $b^* b$. We denote this set by $E^{\is}_{b}$. Next, we study the lower bounds of $b$ with respect to the partial orders $ \src $, $ \slc$ and $\scu$. For each of these partially ordered sets, we prove the existence of order-isomorphisms between the set of elements below $b$ and certain subsets of $ E^{\is}_{b}$ satisfying additional conditions depending on the order under consideration. We begin by introducing the subsets of self-adjoint idempotents that play a central role in this analysis.

\begin{definition}\label{dfn:Es} Let $\mathcal{A}$ be a Rickart *-ring and $b\in \mathcal{A}$. We define the posets
\begin{align*}
E^{\is}_{b} &=\{p\in E(\mathcal A ): p\leq \rp(b) \text{ and } pb^*b= b^*b p \}, \\
E^{\islc}_{b} &=\{p\in E(\mathcal A ): p\leq \rp(b), pb^*b= b^*b p  \text{ and } pb_r p=pb_r\}, \\
E^{\isrc}_{b} &=\{p\in E(\mathcal A ): p\leq \rp(b), pb^*b= b^*b p  \text{ and } pb_r p=b_r p\},\\
E^{\isc}_{b} &=\{p\in E(\mathcal A ): p\leq \rp(b), pb^*b= b^*b p  \text{ and } pb_r= b_r p\}, 
\end{align*}
all of them ordered by $\leq $. 
\end{definition}

Observe that by Lemma \ref{Lemma:propiedadesdellpyrp}$(\ref{prop:dfnestrellaconlp})$ and Lemma \ref{Lemma:propiedadesdellpyrp}$(\ref{prop:dfnestrellaconrp})$, if $ a,b\in \mathcal{A} $ then

\[a\s b \text{ if and only if } a^*a=a^*b \text{ and } aa^*= ba^* .\]

Let us define the map 
\begin{equation}\label{def funcion alpha}
 \alpha\colon [0, b]^{\is}\to E^{\is}_{b}
\end{equation}
 by $\alpha(a)=\rp(a)$. If $a \s b$, by Lemma \ref{Lemma:propiedadesdellpyrp}$(\ref{prop1})$,  we know that $\rp(a) \leq \rp(b)$. In addition, from \cite[Theorem 3.3]{Ci2} it follows that $\rp(a)b^*b= b^*b \rp(a)$ and therefore $\alpha$ is well-defined. Let 
 \begin{equation}\label{def funcion beta}
 \beta\colon E^{\is}_{b}\to  [0, b]^{\is}
 \end{equation}
 be defined by $\beta(p)=bp$. If $p\in E^{\is}_{b}$ then $bp\s b$. Indeed, $(bp)^* (bp)= p b^* b p= pb^* b= (bp)^* b$, since $p$ commutes with $b^*b$. Moreover, it is clear that $(bp)(bp)^*= b(bp)^*$ since $p\in E(\mathcal{A})$. Hence, $\beta $ is well-defined.  

\begin{theorem}\label{thm:isodecrecienteEstrella} Let $\mathcal{A}$ be a Rickart *-ring and $b\in\mathcal{A}$. Then $[0, b]^{\is} $ is isomorphic to $E^{\is}_{b}=\{p\in E(\mathcal A ): p\leq \rp(b) \text{ and } pb^*b= b^*b p \}$. 
\end{theorem}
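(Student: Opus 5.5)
We use the maps $\alpha$ and $\beta$ from (\ref{def funcion alpha}) and (\ref{def funcion beta}); both are already known to be well defined. We will check that they are mutually inverse and order-preserving. The strategy is to compare with Theorem \ref{thm:isodecrecienteRestrella}: $[0,b]^{\is}\subseteq[0,b]^{\irs}$ by Remark \ref{estrellaifflaterales}, and $E^{\is}_b\subseteq E^{\irs}_b$. On these subsets, $\alpha$ and $\beta$ are exactly the restrictions of C\={\i}rulis' maps $\phi(a)=\rp(a)$ and $\psi(p)=bp$.

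\textbf{Mutual inverses.} Since $\phi$ and $\psi$ are mutually inverse, their restrictions are as well, provided each restriction lands in the correct subset. That is exactly the well-definedness of $\alpha$ and $\beta$. Concretely:
\begin{itemize}
\item[(i)] for $a\s b$ we have $a=b\rp(a)$, so $\beta(\alpha(a))=a$;
\item[(ii)] for $p\in E^{\is}_b$ we have $p\leq\rp(b)$, so Lemma \ref{Lemma:propiedadesdellpyrp}(\ref{prop:rpapIgualp}) gives $\rp(bp)=p$, that is, $\alpha(\beta(p))=p$.
\end{itemize}
Hence $\alpha$ and $\beta$ are bijections.

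\textbf{$\alpha$ is order-preserving.} Let $a_1\s a_2$ in $[0,b]^{\is}$. Then $\rp(a_1)\leq\rp(a_2)$ by Remark \ref{phibiendefinida}.

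\textbf{$\beta$ is order-preserving.} This is the only step needing a computation, and it is the main point. Let $p_1\leq p_2$ in $E^{\is}_b$. We must show $bp_1\s bp_2$, i.e.
\[
(bp_1)^*(bp_1)=(bp_1)^*(bp_2)\quad\text{and}\quad (bp_1)(bp_1)^*=(bp_2)(bp_1)^*.
\]
For the first equality, use that $p_1$ commutes with $b^*b$ and that $p_1p_2=p_1$:
\[
p_1b^*bp_2=b^*b\,p_1p_2=b^*bp_1=p_1b^*bp_1 .
\]
The second equality reduces to
\[
bp_1b^*=bp_2p_1b^*,
\]
which holds because $p_2p_1=p_1$.

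Alternatively, one can argue via $(*)$. Since $bp_1\rs bp_2$ already holds by Theorem \ref{thm:isodecrecienteRestrella}, it remains to check $bp_1\ls bp_2$, and then apply Remark \ref{estrellaifflaterales}. The direct computation above is simpler, however.

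Since $\alpha$ and $\beta$ are mutually inverse and both order-preserving, they are isomorphisms, and $[0,b]^{\is}\simeq E^{\is}_b$. The one possible obstacle is that well-definedness of $\alpha$ relies on \cite[Theorem 3.3]{Ci2}; this is already invoked before the statement, so no further work is needed there.
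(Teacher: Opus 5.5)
Your proposal is correct and follows essentially the same route as the paper. It uses the same maps $\alpha(a)=\rp(a)$ and $\beta(p)=bp$, mutual inversion via $a=b\rp(a)$ and Lemma~\ref{Lemma:propiedadesdellpyrp}(\ref{prop:rpapIgualp}), monotonicity of $\alpha$ via $\rp(a_1)\leq\rp(a_2)$, and the same direct verification of $(bp_1)^*(bp_1)=(bp_1)^*(bp_2)$ and $(bp_1)(bp_1)^*=(bp_2)(bp_1)^*$ from $p_1b^*b=b^*bp_1$, $p_1p_2=p_1=p_2p_1$; describing the maps as restrictions of C\={\i}rulis' $\phi,\psi$ is a harmless extra remark that the argument does not depend on.
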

\begin{proof}  Let us consider the maps $\alpha$ and $\beta$ defined above, and $a \s b$. Since $b\rp(a)= a$ we have $\beta(\alpha(a)) = \beta(\rp(a)) = a$ and, by Lemma \ref{Lemma:propiedadesdellpyrp}$(\ref{prop:rpapIgualp})$, $\alpha(\beta(p)) = \alpha(bp) = p$. Hence $\alpha$ and $\beta$ are mutually inverse, and therefore bijective. It remains to prove that they are order-preserving. Let $a_1 \s a_2$. By Lemma \ref{Lemma:propiedadesdellpyrp}$(\ref{prop1})$, $\rp(a_1) \leq \rp(a_2)$, and thus $\alpha(a_1) \leq \alpha(a_2)$. Finally, to prove that $\beta$ is order-preserving, we have to show that if $p_1 \leq p_2$ then $bp_1\s bp_2$. Indeed,
\[(bp_1)^*(bp_1)=p_1b^*bp_1= p_1b^*b= p_1p_2b^*b=p_1b^*bp_2=(bp_1)^*(bp_2), \] and 
\[(bp_1)(bp_1)^*=bp_1b^*=bp_2p_1b^*= (bp_2)(bp_1)^*.\qedhere\]
 \end{proof}

\begin{theorem}\label{thm:isodecrecienteEstrellalcuadrado} Let $\mathcal{A}$ be a Rickart *-ring and $b\in \mathcal{A}$. 
\begin{enumerate}[(a)]
 \item\label{thmitem:islciso} $[0, b]^{\islc} $ is isomorphic to $E^{\islc}_{b}$. 
 \item \label{thmitem:isrciso} $[0, b]^{\isrc} $ is isomorphic to $E^{\isrc}_{b}$. 
 \item \label{thmitem:isc} $[0, b]^{\isc} $ is isomorphic to $E^{\isc}_{b}$.  
\end{enumerate}
\end{theorem}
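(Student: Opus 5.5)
We restrict the maps $\alpha$ and $\beta$ of Theorem \ref{thm:isodecrecienteEstrella}, exactly as Theorem \ref{thm:isodecreRestrellaConCuadrados} restricted $\phi$ and $\psi$. For (\ref{thmitem:islciso}) we consider $\overline{\alpha}\colon [0,b]^{\islc}\to E^{\islc}_b$, $a\mapsto \rp(a)$, and $\overline{\beta}\colon E^{\islc}_b\to[0,b]^{\islc}$, $p\mapsto bp$. Theorem \ref{thm:isodecrecienteEstrella} already gives that $\alpha,\beta$ are mutually inverse and that $\alpha$ is order-preserving. It therefore suffices to prove three things: both restrictions land in the stated sets, and $\overline{\beta}$ is order-preserving with respect to $\slc$.

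For well-definedness of $\overline{\alpha}$, let $a\slc b$. Theorem \ref{thm:isodecrecienteEstrella} gives $\rp(a)\in E^{\is}_b$. By (\ref{slc iff lslc rslc}) we also have $a\rslc b$, so Lemma \ref{AuxiliarIso1}(\ref{rslc rpbr}) yields $\rp(a)b_r\rp(a)=\rp(a)b_r$. Hence $\rp(a)\in E^{\islc}_b$.

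For $\overline{\beta}$, let $p\in E^{\islc}_b$. Theorem \ref{thm:isodecrecienteEstrella} gives $bp\s b$. Since $p\leq\rp(b)$ and $pb_rp=pb_r$, Lemma \ref{AuxiliarIso2}(\ref{condicion lc para bp}) gives $(bp)^2=bpb$, so $bp\slc b$.

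For monotonicity, let $p_1\leq p_2$ in $E^{\islc}_b$. Theorem \ref{thm:isodecrecienteEstrella} gives $bp_1\s bp_2$, and Lemma \ref{AuxiliarIso2}(\ref{condicion lc para conp1p2}) gives $(bp_1)^2=bp_1bp_2$. Hence $bp_1\slc bp_2$.

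Part (\ref{thmitem:isrciso}) is identical with the right-hand versions: Lemma \ref{AuxiliarIso1}(\ref{rsrc rpbr}) via (\ref{src iff lsrc rsrc}), and Lemma \ref{AuxiliarIso2}(\ref{condicion rc para bp}) and (\ref{condicion rc para conp1p2}). For (\ref{thmitem:isc}), (\ref{scu iff slc src}) gives $[0,b]^{\isc}=[0,b]^{\islc}\cap[0,b]^{\isrc}$. On the other side we need $E^{\isc}_b=E^{\islc}_b\cap E^{\isrc}_b$. One inclusion is clear: if $pb_r=b_rp$ then $pb_rp=b_rp^2=b_rp$, and also $pb_rp=pb_r$. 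Conversely, if $pb_rp=pb_r$ and $pb_rp=b_rp$, then $pb_r=b_rp$. The restricted maps are then mutually inverse order-preserving maps by (\ref{thmitem:islciso}) and (\ref{thmitem:isrciso}).

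There is no real obstacle, since all the computational content is in Lemmas \ref{AuxiliarIso1} and \ref{AuxiliarIso2}. The point to check carefully is the monotonicity of $\overline{\beta}$. It requires both $bp_1\s bp_2$ and the square condition relative to $bp_2$ rather than $b$. The first comes from Theorem \ref{thm:isodecrecienteEstrella}, and the second uses only $p_1\leq p_2$ together with the defining conditions on $p_1$.
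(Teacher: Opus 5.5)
Your proposal is correct and follows essentially the same route as the paper: you restrict $\alpha$ and $\beta$ from Theorem \ref{thm:isodecrecienteEstrella}, use Lemma \ref{AuxiliarIso1} via (\ref{slc iff lslc rslc}) and (\ref{src iff lsrc rsrc}) for $\overline{\alpha}$, use Lemma \ref{AuxiliarIso2} for the well-definedness and monotonicity of $\overline{\beta}$, and handle part (c) by the intersection argument. Your explicit verification that $E^{\isc}_b=E^{\islc}_b\cap E^{\isrc}_b$ fills in the step the paper only calls straightforward.
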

\begin{proof}
To prove the isomorphism ($\ref{thmitem:islciso}$), we consider the restrictions $\overline{\alpha}$ and $\overline{\beta}$ of the maps $\alpha$ and $\beta$ defined in $(\ref{def funcion alpha})$ and $(\ref{def funcion beta})$,   
\[\overline{\alpha}\colon [0, b]^{\islc}\to E^{\islc}_{b}\]  and  \[\overline{\beta}\colon E^{\islc}_{b}\to  [0, b]^{\islc}. \] 

 By Theorem \ref{thm:isodecrecienteEstrella}, it remains to prove that $\overline{\alpha}$ and $\overline{\beta}$ are well-defined and that $\overline{\beta}$ is also order-preserving. We begin with the former.
\begin{itemize}
    \item[(i)] If $a\slc b$, then we have $a\rslc b$ by (\ref{slc iff lslc rslc}). From Lemma \ref{AuxiliarIso1}$(\ref{rslc rpbr})$ It follows that $\rp(a)b_r\rp(a)=\rp(a)b_r$. 
\item[(ii)] If $p\in  E^{\islc}_{b}$, by Lemma \ref{AuxiliarIso2}$(\ref{condicion lc para bp})$, we obtain $(bp)^2= bpb$. Since $\overline{\beta} $ is a restriction of $\beta$ to the poset $E^{\islc}_{b}$, we have $bp\in [0, b]^{\islc}$.
\end{itemize}

To establish that $\overline{\beta}$ is order-preserving, it suffices to verify that, for $p_1, p_2\in  E^{\islc}_{b}$, whenever $p_1\leq p_2$, we have $(bp_1)^2= bp_1bp_2$, and this follows from Lemma \ref{AuxiliarIso2}$(\ref{condicion lc para conp1p2})$.

Now, in order to prove the isomorphism in ($\ref{thmitem:isrciso}$), we consider the following restrictions of the maps $\alpha$ and $\beta$ defined above: 
\[\overline{\alpha}\colon [0, b]^{\isrc}\to E^{\isrc}_{b}\]  and  \[\overline{\beta}\colon E^{\isrc}_{b}\to  [0, b]^{\isrc}. \] 
Let us start by proving that $\overline{\alpha}$ and $\overline{\beta}$ are well defined.

\begin{itemize}
    \item[(i)] If $a\src b$, then by (\ref{src iff lsrc rsrc}) we have $a\rsrc b$. From Lemma \ref{AuxiliarIso1}$(\ref{rsrc rpbr})$ we obtain  $\rp(a)b_r\rp(a)=b_r\rp(a)$. 
\item[(ii)] Since $\overline{\beta}$ is the restriction of $\beta$ to the poset $E^{\isrc}_{b}$, it suffices to show that, if $p\in E^{\isrc}_{b}$ then $(bp)^2 = b^2p$, and this follows immediately from Lemma \ref{AuxiliarIso2}$(\ref{condicion rc para bp})$.

\end{itemize}
To conclude the proof of ($\ref{thmitem:isrciso}$), we show that $\overline{\beta}$ is order-preserving. Let $p_1,p_2\in E^{\isrc}_{b}$ with $p_1\leq p_2$. By the definition of $\overline{\beta}$, it follows that $bp_1\s bp_2$. It remains to prove that $(bp_1)^2= bp_2bp_1$, but this follows from Lemma \ref{AuxiliarIso2}$(\ref{condicion rc para conp1p2})$.

Finally, part ($\ref{thmitem:isc}$) follows by an argument analogous to that used in Theorem \ref{thm:isodecreRestrellaConCuadrados}$(\ref{isorsc})$, since $[0, b]^{\isc}=[0, b]^{\islc}\cap [0, b]^{\isrc}$ by (\ref{scu iff slc src}), and it is straightforward to verify that $E^{\isc}_{b}= E^{\islc}_{b}\cap E^{\isrc}_{b}$. 
\end{proof}

We now state a result that follows immediately from the previous theorem and provides a characterization of the elements $a$ in $\mathcal{A}$ lying below a given element $b$ in $\mathcal{A}$ with respect to the orders $\slc$, $\src$ and $\scu$.

\begin{corollary}\label{cor:charactbelows} Let $\mathcal{A}$ be a Rickart *-ring and $a,b\in \mathcal{A}$.
\begin{enumerate}[(a)]
 \item $a\slc b$ if and only if there exists $p\in E(\mathcal{A})$ such that $a= bp$, $p\leq \rp(b)$, $pb^*b= b^*b p$   and $pb_rp=pb_r$.
 \item $a\src b$ if and only if there exists $p\in E(\mathcal{A})$ such that $a= bp$, $p\leq \rp(b)$, $pb^*b= b^*b p$ and $pb_rp=b_rp$.
 \item $a\scu b$ if and only if there exists $p\in E(\mathcal{A})$ such that $a= bp$, $p\leq \rp(b)$, $pb^*b= b^*b p$ and $pb_r=b_rp$.
\end{enumerate}
Moreover, in each case, the element $p$ is unique.
\end{corollary}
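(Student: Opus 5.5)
The plan is to read the corollary off the order-isomorphisms of Theorem \ref{thm:isodecrecienteEstrellalcuadrado}, namely $\overline{\alpha}(a)=\rp(a)$ and $\overline{\beta}(p)=bp$. These are mutually inverse by Theorem \ref{thm:isodecrecienteEstrella}. We treat (a) in detail; (b) and (c) follow by the same argument with $E^{\islc}_{b}$ replaced by $E^{\isrc}_{b}$ and $E^{\isc}_{b}$ respectively.

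For the forward direction of (a), suppose $a\slc b$, so that $a\in[0,b]^{\islc}$. Put $p=\rp(a)=\overline{\alpha}(a)$. Since $\overline{\alpha}$ is well defined, $p\in E^{\islc}_{b}$. This means $p\in E(\mathcal A)$, $p\leq \rp(b)$, $pb^*b=b^*bp$ and $pb_rp=pb_r$. Moreover $a\s b$ by (\ref{sconcuadradosimplicans}), so $a=b\rp(a)=bp$ by ($*$). Equivalently, $a=\overline{\beta}(\overline{\alpha}(a))$.

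For the converse, suppose $p\in E(\mathcal A)$ satisfies the listed conditions. Then $p\in E^{\islc}_{b}$, hence $a=bp=\overline{\beta}(p)\in[0,b]^{\islc}$, that is, $a\slc b$. The only thing to check is that the conditions listed in (a) are exactly the defining conditions of $E^{\islc}_{b}$ in Definition \ref{dfn:Es}, which they are.

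For uniqueness, suppose $p$ and $p'$ both satisfy the conditions with $a=bp=bp'$. Both lie in $E^{\islc}_{b}$, and $\overline{\beta}$ is injective because it has the left inverse $\overline{\alpha}$. Concretely, Lemma \ref{Lemma:propiedadesdellpyrp}(\ref{prop:rpapIgualp}) gives $\rp(bp)=p$ whenever $p\leq\rp(b)$, so $p=\rp(a)=p'$.

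No real obstacle is expected, since all the content lies in Theorem \ref{thm:isodecrecienteEstrellalcuadrado}. The one point needing care is in the forward direction: the witness must be taken as $\rp(a)$, and the identity $a=b\rp(a)$ must be justified from the star-order part of the definition.
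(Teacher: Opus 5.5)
Your proposal is correct and follows the paper's own route. The paper says only that the corollary ``follows immediately'' from Theorem \ref{thm:isodecrecienteEstrellalcuadrado}, and your argument fills this in the same way: you use the mutually inverse maps $a\mapsto\rp(a)$ and $p\mapsto bp$, take the witness $p=\rp(a)$, and get uniqueness from $\rp(bp)=p$ for $p\leq\rp(b)$ via Lemma \ref{Lemma:propiedadesdellpyrp}(\ref{prop:rpapIgualp}).
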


We devote the remainder of this section to the study of the interval $[a, b]^\alpha$ for each $\alpha\in \{ \crsc, \clsc, \cscu\} $ .
   
\begin{proposition} Let $\mathcal{A}$ be a Rickart *-ring ordered by $\stackrel{\alpha}{\preceq}$ with $\alpha\in \{ \crsc, \clsc, \cscu\} $  and $a,b\in \mathcal{A}$ such that $a\stackrel{\alpha}{\preceq} b$. Then $[a, b]^{\alpha}=\{x\in \mathcal{A}: a \stackrel{\alpha}{\preceq} x \stackrel{\alpha}{\preceq} b\}$ is isomorphic to $[0, b-a]^{\alpha}$.
\end{proposition}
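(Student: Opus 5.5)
The plan is to use the translation map $\tau\colon [a,b]^{\alpha}\to[0,b-a]^{\alpha}$, $\tau(x)=x-a$, with candidate inverse $\sigma(y)=y+a$. The key lemma to establish first is a ``subtraction'' property: for $\alpha\in\{\crsc,\clsc,\cscu\}$,
\[
a\stackrel{\alpha}{\preceq} x\ \text{ and }\ a\stackrel{\alpha}{\preceq} y \implies \bigl(x\stackrel{\alpha}{\preceq} y \iff x-a\stackrel{\alpha}{\preceq} y-a\bigr).
\]
Applying it with $y=b$ gives that $\tau$ is well defined. Applying it with $y$ ranging over $[a,b]$ gives that $\tau$ and $\sigma$ are order-preserving. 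For well-definedness of $\sigma$ we also need the converse statement: if $y\stackrel{\alpha}{\preceq} b-a$, then $a\stackrel{\alpha}{\preceq} y+a\stackrel{\alpha}{\preceq} b$.

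By Remark~\ref{lsrc lsls isomorfos lslc lsrc} (the involution swaps $\clsc$ and $\crsc$), and by (\ref{scu iff lsc rsc}), it suffices to treat $\alpha=\crsc$. The case $\clsc$ then follows via $*$, and the case $\cscu$ by intersecting the two.

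The basic identities for $a\rsc b$ are $a=a\rp(b)=b\rp(a)$ and $a^2=ab=ba$. Put $e=\rp(a)$ and $c=b-a$. Then
\[
ce=b e-a e=a-a=0.
\]
Next, $a c=ab-a^2=0$ and $c a=ba-a^2=0$. From $ac=0$ we get $\rp(a)c=ec=0$. Hence $c=c(1-e)$ and $\rp(c)\le 1-e$, i.e.\ $\rp(c)e=0$ and $\rp(c)\rp(a)=0$. Also $c\rp(b)=c$, so $\rp(c)\le\rp(b)$, and therefore $\rp(b)=\rp(a)+\rp(c)$ should hold. This ``orthogonal decomposition'' $b=a+c$, with $ac=ca=0$ and $\rp(a)\rp(c)=0$, is the core tool.

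Now suppose $y\rsc c$. Since $y=c\rp(y)$ and $\rp(y)\le\rp(c)$, the same orthogonality gives
\[
ay=ac\rp(y)=0,\qquad ya=y\rp(c)\,\rp(a)\cdots=0,
\]
where the last vanishing uses $y=y\rp(c)$ and $\rp(c)a$. Here there is an obstacle: we need $\rp(c)a=0$, not merely $\rp(c)\rp(a)=0$. We do have $ca=0$, so $\rp(c)$ is orthogonal to $\lp(a)$ rather than to $\rp(a)$ directly. To handle this, I would write $ya=y\,\rp(c)\,a$ and use $ca=0$, i.e.\ $a\in c^\circ=(1-\rp(c))\mathcal A$. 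This gives $\rp(c)a=0$, and hence $ya=0$.

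With $ay=ya=0$, $\rp(y)\rp(a)=0$, and similar vanishing products, the verification becomes routine:
\begin{itemize}
\item $\rp(y+a)=\rp(y)+\rp(a)$, using orthogonality and Lemma~\ref{Lemma:propiedadesdellpyrp}(\ref{prop:rpapIgualp}) applied to $b$ with $p=\rp(y)+\rp(a)\le\rp(b)$, noting $b p=y+a$.
\item The square conditions reduce to $(y+a)^2=y^2+a^2$, which matches $(y+a)b=yb+ab=y^2+a^2$ because $yb=y(a+c)=yc=y^2$, and symmetrically on the other side.
\end{itemize}
Conversely, for $a\rsc x\rsc b$, set $y=x-a$. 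Applying the decomposition to the pair $a\rsc x$ gives $x=a+y$ with the orthogonality relations, and one then checks $y\rsc c$ by the same cancellations.

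The main obstacle I expect is exactly the control of $\rp$ of sums and differences: showing $\rp(x-a)=\rp(x)-\rp(a)$ and $\rp(b-a)\ge\rp(x-a)$. Here the commutation (two-sided square) condition is essential; the one-sided variants fail without it, which explains the restriction to $\alpha\in\{\crsc,\clsc,\cscu\}$. If direct manipulation gets messy, I would instead pass through Theorem~\ref{thm:isodecreRestrellaConCuadrados}(\ref{isorsc}). The goal there is to show that $E^{\irsc}_{b}\cap\{p: p\ge\rp(a)\}$ is isomorphic to $E^{\irsc}_{b-a}$ via $p\mapsto p-\rp(a)$, using that $\rp(a)$ commutes with $b_r$ and $(b-a)_r=b_r-a_r$ with $a_r=\rp(a)b_r\rp(a)$.
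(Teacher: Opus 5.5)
Your proposal is correct, but you verify it by a different route than the paper. The paper first shows $b-a\rsc b$ and $\rp(b-a)=\rp(b)-\rp(a)$. It then transports the problem through the isomorphism $[0,b]^{\irsc}\simeq E^{\irsc}_b$ of Theorem~\ref{thm:isodecreRestrellaConCuadrados}. There, the segment $[a,b]^{\irsc}$ becomes the interval of projections between $\rp(a)$ and $\rp(b)$ in $E^{\irsc}_b$, and $p\mapsto p-\rp(a)$ shifts this onto the projections below $\rp(b-a)$. The case $\lsc$ is obtained through $*$, and the case $\scu$ is redone with the extra commutation with $b^*b$. You instead stay in the ring and check the translation $x\mapsto x-a$ directly, using the orthogonal decomposition $b=a+c$ with $ac=ca=0$ and $\rp(a)\rp(c)=0$. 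Unwinding the paper's composite $x\mapsto\rp(x)\mapsto\rp(x)-\rp(a)\mapsto(b-a)(\rp(x)-\rp(a))=x-a$ shows it is the same isomorphism.

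What your version buys:
\begin{itemize}
\item It is self-contained and does not need the isomorphisms of Section~\ref{sec:iso_down-sets}.
\item It shows exactly where $a^2=ab$ enters (giving $ay=0$) and where $a^2=ba$ enters (giving $ya=0$).
\item Because the map is literally the same for $\lsc$ and $\rsc$, the case $\scu$ follows at once by intersecting, using (\ref{scu iff lsc rsc}).
\item It avoids identifying $E^{\irsc}_{b-a}$ with $\{p\in E^{\irsc}_b: p\le\rp(b-a)\}$, a step the paper passes over quickly.
\end{itemize}
The paper's version, in exchange, reuses machinery already built and reduces everything to arithmetic of projections commuting with $b_r$.

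Two steps you left as ``should hold'' or ``by the same cancellations'' close easily.

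First, the identity $\rp(b)=\rp(a)+\rp(c)$ is your own first bullet with $y=c$. Take $p=\rp(a)+\rp(c)\le\rp(b)$. Since $a\rp(c)=a\rp(a)\rp(c)=0$, one has $bp=b\rp(a)+(a+c)\rp(c)=a+c=b$. Then Lemma~\ref{Lemma:propiedadesdellpyrp}(\ref{prop:rpapIgualp}) gives $\rp(b)=p$.

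Second, in the converse direction you need $\rp(x-a)\le\rp(c)$. Apply the identity above to both pairs $a\rsc x$ and $a\rsc b$: $\rp(a)+\rp(x-a)=\rp(x)\le\rp(b)=\rp(a)+\rp(c)$. Multiply on the left by $\rp(x-a)$, which is orthogonal to $\rp(a)$. This yields $\rp(x-a)=\rp(x-a)\rp(c)$.
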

\begin{proof} First, assume that $\alpha=\crsc$ and $a\rsc b$. We prove that $b-a\rsc b$. Since $a\rsc b$, in particular, we have $aa^*= ba^*$, and hence $ab^*= (ba^*)^*= (aa^*)^*= aa^*=ba^*$. Therefore, $(b-a)(b-a)^*=bb^*-ab^*-ba^*+a a^*= bb^*-ab^*= bb^*-ba^*=b(b-a)^*$. Moreover, $(b-a)\rp(b)= b\rp(b)-a\rp(b)=b-a$, which implies that $b-a\rs b$. Since $a^2=ab=ba$, it follows that $(b-a)^2= (b-a)b= b(b-a)$. Thus, $b-a\rsc b$. Therefore, we conclude that $\rp(b-a)\in E_b^{\irsc}$. 

Clearly, $\rp(b)-\rp(a)$ is self-adjoint. By $\rp(a)\leq \rp(b)$, we have $(\rp(b)-\rp(a))^2=\rp(b)-\rp(a)$, and therefore $\rp(b)-\rp(a)\in E(\mathcal{A})$. Next we prove that $\rp(b-a)= \rp(b)-\rp(a)$. Since $( \rp(b)-\rp(a))\rp(b)=  \rp(b)-\rp(a)$, Lemma \ref{Lemma:propiedadesdellpyrp}($\ref{prop:rpapIgualp}$) yields $\rp(b( \rp(b)-\rp(a)))= \rp(b)-\rp(a)$. Moreover, as $a\rs b$, Lemma \ref{Lemma:propiedadesdellpyrp}($\ref{prop0}$) implies that $b( \rp(b)-\rp(a))=b\rp(b)-b\rp(a)=b-a$. Hence, $\rp(b-a)= \rp(b)-\rp(a)$. 

By Theorem \ref{thm:isodecreRestrellaConCuadrados}$(\ref{isorsc})$, we know that the posets $[a,b]^{\irsc}$ and $[0, b-a]^{\irsc}$ are isomorphic to the posets $[\rp(a), \rp(b)]=\{p\in E_b^{\irsc}: \rp(a)\leq p\leq \rp(b)\}$ and $[0, \rp(b-a)]=\{p\in E_b^{\irsc}: p\leq \rp(b-a)\}$, respectively. We now prove that 
$[\rp(a), \rp(b)] \simeq [0, \rp(b-a)]$. To do that, we consider the maps $f\colon [\rp(a), \rp(b)] \to [0, \rp(b-a)]$ defined by $f(p)=p-\rp(a)$ and $g\colon [0, \rp(b-a)] \to [\rp(a), \rp(b)]$, defined by $g(r)=r+\rp(a)$. Let us observe that if $p\in [\rp(a), \rp(b)]$ then $f(p)\rp(b-a)= (p-\rp(a))(\rp(b)-\rp(a))=f(p)$, that is $f(p)\leq \rp(b-a)$. Clearly, $p-\rp(a)\in E(\mathcal{A})$ and $(p-\rp(a))b_r=b_r(p-\rp(a))$. So, $f$ is well-defined. On the other hand, if $p\leq \rp(b-a)=\rp(b)-\rp(a)$ then $p=p(\rp(b)-\rp(a))$ and so $p\rp(b)=p+p\rp(a)$. But $p\rp(b)=p$ since $p\leq \rp(b)-\rp(a)\leq \rp(b)$. Thus, $p=p+p\rp(a)$ and consequently $p\rp(a)=0$. From this, $(p+\rp(a))\rp(a)=\rp(a)$, $(p+\rp(a))\rp(b)=p+\rp(a)$ and $p+\rp(a)\in E_b^{\irsc}$. Therefore, $g$ is well-defined. It is easy to see that $f$ and $g$ are order-preserving, and they are clearly mutually inverse. Hence, $f$ and $g$ are isomorphisms. 

Using the isomorphism induced by $*$ we have that 
\[[a,b]^{\ilsc}\simeq [a^*,b^*]^{\irsc}\simeq [0, b^*-a^*]^{\irsc}\simeq [0, b-a]^{\ilsc}.\]

Finally, we consider $\alpha=\cscu$ and $a\scu b$. By $a\scu b$, we have $a^* a= a^*b$ and then $a^*a=(a^* a)^*= (a^*b)^*=b^*a$. Thus, $(b-a)^*(b-a)=b^*b-a^*b-b^*a+a^*a= b^*b-a^*b=(b-a)^*b$. Analogously to the previous case, $(b-a)(b-a)^*= b(b-a)^*$ and $(b-a)^2=(b-a)b= b(b-a)$. So, $b-a\scu b$. Let $[\rp(a), \rp(b)]$ be the set $\{p\in E_b^{\isc}: \rp(a)\leq p\leq \rp(b)\}$ and $[0, \rp(b-a)]$ be the set $\{p\in E_b^{\isc}: p\leq \rp(b-a)\}$. Once again, we consider $f\colon [\rp(a), \rp(b)] \to [0, \rp(b-a)]$ defined by $f(p)=p-\rp(a)$ and $g\colon [0, \rp(b-a)] \to [\rp(a), \rp(b)]$, defined by $g(r)=r+\rp(a)$. Clearly, $(p-\rp(a))b^*b=b^*b(p-\rp(a))$ and $(p+\rp(a))b^*b=b^*b(p+\rp(a))$. In view of the proof for $\alpha=\irsc$, we obtain the result for $\alpha=\isc$.
\end{proof}	

Note that, by the above proof, if $a\stackrel{\alpha}{\preceq} b$ with $\alpha\in \{ \crsc, \clsc,\cscu\} $ then the sets $E_{b-a}^{\alpha}$ and $[0, \rp(b-a)]\subseteq E_b^{\alpha}$ coincide.

\section{Lattice structure of down-sets in regular Rickart *-rings}\label{sec:latticestructureofdownsets}

 In this section we prove that the down-set of an element $b$ in a regular Rickart *-ring, ordered by any of the partial orders $\stackrel{\alpha}{\preceq}$, is a lattice. More precisely, whenever $a_1, a_2\in [0, b]^\alpha$, the pair $\{a_1, a_2\}$ admits both a least upper bound and a greatest lower bound in $[0, b]^\alpha$. This result is obtained as an application of the order-isomorphisms established in Section \ref{sec:iso_down-sets}. Later, we study the existence of the supremum and infimum of pairs of elements in a regular Rickart *-ring. A poset is said to have the \emph{upper bound property} if every pair of its elements bounded from above has the least upper bound. We prove that a regular Rickart *-ring ordered by any $\stackrel{\alpha}{\preceq}$ has the upper bound property. We provide characterizations of the supremum and infimum whenever they exist. Finally, we generalize the results for a nonempty subset of elements of the ring for regular Baer *-rings.

It is known that the subset $E(\mathcal A)$ of self-adjoint idempotents of a Rickart *-ring $\mathcal{A}$ is a lattice with $p\vee q= q+\rp(p(1-q))$ and $p\wedge q= p-\lp(p(1-q))$ \cite[Proposition 1.15]{Be}. Moreover, if $\mathcal{A}$ is regular then $p\vee q= q+(p(1-q))^\dag p(1-q) $ and $ p\wedge q=p-p(1-q)(p(1-q))^\dag $. 

Let us recall that a nonempty subset $L'$ of a lattice $L$ is called a sublattice of $L$ if, for every $a, b \in L'$, both $a \vee b$ and $a \wedge b$ are in $L'$, where $\vee$ and $\wedge$ denote the lattice operations of $L$. For any $b\in \mathcal{A}$, the subsets $E_b^{\irs}$ and $E^{\ils}_b$ are sublattices of $E(\mathcal A)$. Indeed, if $p, q\in E_b^{\irs}$ then $p, q\leq \rp(b)$ and so $p\vee q\leq \rp(b)$, since $p\vee q$ is the least upper bound of $p$ and $q$ in $E(\mathcal{A})$. Clearly, $p\wedge q\leq \rp(b)$. Thus, $E_b^{\irs}$ is a sublattice of $E(\mathcal A)$. Similarly, it can be proved that $E_b^{\ils}$ is also a sublattice of $E(\mathcal A)$. 

We begin by establishing several properties that will be useful later. 

\begin{lemma}\label{Lemma:supremos} Let $\mathcal{A}$ be a regular Rickart *-ring, $x\in\mathcal{A}$ and $p, q\in E(\mathcal{A})$. 
\begin{enumerate}[(a)]
 \item\label{Lemma:supremo_rcuadrado} If $pxp=xp$ and $qxq=xq$, then $(p\vee q)x(p\vee q)=x(p\vee q) $. 
\item\label{Lemma:supremolc} If $pxp=px$ and $qxq=qx$, then $(p\vee q)x(p\vee q)=(p\vee q)x $. \end{enumerate} 
\end{lemma}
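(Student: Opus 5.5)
The idea is to rewrite each hypothesis as an annihilation condition and then use the explicit regular-ring formula for $p\vee q$. For $e\in E(\mathcal A)$, the equality $exe=xe$ is equivalent to $(1-e)xe=0$. So for (\ref{Lemma:supremo_rcuadrado}) it suffices to prove $(1-e)xe=0$ with $e=p\vee q$. Recall that $p\vee q=q+r$, where $s=p(1-q)$ and $r=s^\dag s$.

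First, I handle the pieces $p$ and $q$ separately. Since $q\leq e$, we have $(1-e)q=0$. Together with $xq=qxq$ this gives
\[(1-e)xq=(1-e)qxq=0,\]
and in the same way $(1-e)xp=(1-e)pxp=0$.

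The main step is the term $r$, which is not directly a combination of $p$ and $q$. I show that $r$ lies in the right ideal $s^*\mathcal A=(1-q)p\mathcal A$, using the standard Moore--Penrose identity
\[s^\dag=s^\dag (s^\dag)^* s^*.\]
Taking adjoints of the appropriate form, $s^\dag=s^*(s^\dag)^*s^\dag$, so
\[r=s^\dag s=(1-q)p\,u \quad\text{with } u=(s^\dag)^*s^\dag s.\]
Then
\[xr=(xp-xqp)u=(pxp-qxq\,p)u,\]
and multiplying on the left by $1-e$ kills both terms, because $(1-e)p=0$ and $(1-e)q=0$. Hence $(1-e)xr=0$. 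Adding this to $(1-e)xq=0$ gives $(1-e)xe=0$, which proves (\ref{Lemma:supremo_rcuadrado}).

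For (\ref{Lemma:supremolc}), apply the involution. The equality $pxp=px$ is equivalent to $px^*p=x^*p$, and similarly for $q$. So (\ref{Lemma:supremo_rcuadrado}) applied to $x^*$ gives $(p\vee q)x^*(p\vee q)=x^*(p\vee q)$. Taking adjoints, and using that $p\vee q$ is self-adjoint, yields $(p\vee q)x(p\vee q)=(p\vee q)x$.

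The only delicate point is expressing $r$ as an element of $(1-q)p\mathcal A$. This is exactly where regularity, through the existence of $s^\dag$, is used.
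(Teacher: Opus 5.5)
Your proof is correct and is essentially the paper's argument. Both use $p\vee q=q+s^\dag s$ with $s=p(1-q)$ and rewrite $s^\dag s$ as $(1-q)p$ times an element; your $u=(s^\dag)^*s^\dag s$ is exactly $((1-q)p)^\dag$, the factor the paper uses. The only differences are your bookkeeping via $(1-e)xe=0$ and your derivation of (b) from (a) by the involution instead of a second direct computation.

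As a side remark, your annihilator formulation shows that regularity is not actually needed here. Since $(1-e)x(1-q)p=0$, the element $(1-e)x$ left-annihilates $s^*$, so $(1-e)x\in\mathcal{A}(1-\lp(s^*))=\mathcal{A}(1-\rp(s))$ and hence $(1-e)x\,\rp(s)=0$. This uses only $p\vee q=q+\rp(s)$, which holds in any Rickart *-ring.
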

\begin{proof}Since $p, q\in E(\mathcal{A})$, we know that $p\vee q$ exists and using the expression of $p\vee q$, we have that $(p\vee q)x(p\vee q)= (p\vee q)x( q+ (p(1-q))^\dag p(1-q))  = (p\vee q) x q  + (p\vee q)x (p(1-q))^\dag p(1-q)$. From $qxq= x q$ and $q\leq p\vee q$, we obtain $(p\vee q) x q= (p\vee q) q x q= qxq= x q$. Observe that $(p(1-q))^\dag p(1-q)=\left((p(1-q))^\dag p(1-q) \right)^*= (1-q)p((1-q)p)^\dag =(p-qp)((1-q)p)^\dag $. Combining this with the assumptions on $p$ and $q$, it follows that $(p\vee q)x ((p(1-q))^\dag p(1-q)) = (p\vee q)x (p-qp)((1-q)p)^\dag  =   
  \left( (p\vee q)px p- (p\vee q)qx qp\right) ((1-q)p)^\dag=\left( px p- qx qp\right) ((1-q)p)^\dag = 
  \left( x p- x qp\right) ((1-q)p)^\dag= x (1- q)p ((1-q)p)^\dag= x(p(1-q))^\dag p(1-q)= x\rp(p(1-p))$. Therefore, $(p\vee q)x(p\vee q)=x q+ x\rp(p(1-p))= x(p\vee q)$. So, $(\ref{Lemma:supremo_rcuadrado})$ holds.

Using again the expression of $p\vee q$ we have that $(p\vee q)x(p\vee q)= ( q+ (p(1-q))^\dag p(1-q)) x (p\vee q)  = q x (p\vee q) + (p(1-q))^\dag p(1-q) x(p\vee q)= q x q (p\vee q) + (p(1-q))^\dag p x p (p\vee q)- (p(1-q))^\dag p q xq (p\vee q). $ Thus, by the assumptions, we have that $(p\vee q)x(p\vee q) = q x q + (p(1-q))^\dag p x p - (p(1-q))^\dag p q xq= q x  + (p(1-q))^\dag p x  - (p(1-q))^\dag p q x =(q + (p(1-q))^\dag (p-pq) )x = (p\vee q)x.$ Therefore, $(\ref{Lemma:supremolc})$ is satisfied. 
\end{proof}

\begin{remark}\label{condicionescomplemento} If $p$ is self-adjoint idempotent such that $pxp=xp$, for some $x$, it is straightforward to verify that $(1-p)x(1-p)= (1-p)x$. Similarly, if $pxp=px$, then $(1-p)x(1-p)= x(1-p)$. 
\end{remark}

\begin{corollary}\label{cor:infimos} Let $\mathcal{A}$ be a regular Rickart *-ring, $x\in\mathcal{A}$ and $p,q\in E(\mathcal{A})$.
\begin{enumerate}[(a)]
\item \label{cor:infimorc} If $pxp=xp$ and $qxq=xq$, then $(p\wedge q)x(p\wedge q)=x(p\wedge q) $. 
\item \label{cor:infimolc} If $pxp=px$ and $qxq=qx$, then $(p\wedge q)x(p\wedge q)=(p\wedge q)x $. 
\end{enumerate}

\end{corollary}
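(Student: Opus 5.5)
We will derive Corollary \ref{cor:infimos} from Lemma \ref{Lemma:supremos} by passing to complements, using the De Morgan law in the orthomodular lattice $E(\mathcal{A})$: for $p,q\in E(\mathcal{A})$,
\[p\wedge q = 1-\bigl((1-p)\vee(1-q)\bigr).\]
This holds because $r\mapsto 1-r$ is an order-reversing involution of $(E(\mathcal{A}),\leq)$. Indeed, for $r,s\in E(\mathcal{A})$ we have $r=rs$ if and only if $(1-s)=(1-s)(1-r)$. So it turns suprema into infima, and $1-\bigl((1-p)\vee(1-q)\bigr)$ is the greatest lower bound of $p$ and $q$.

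For part (\ref{cor:infimorc}), assume $pxp=xp$ and $qxq=xq$. By Remark \ref{condicionescomplemento}, $p'=1-p$ and $q'=1-q$ satisfy $p'xp'=p'x$ and $q'xq'=q'x$. These are exactly the hypotheses of Lemma \ref{Lemma:supremos}(\ref{Lemma:supremolc}), which gives $s x s = s x$ for $s=p'\vee q'$. Since $p\wedge q=1-s$, a second application of Remark \ref{condicionescomplemento} to $s$ (the second statement there, with hypothesis $sxs=sx$) yields $(1-s)x(1-s)=x(1-s)$, that is, $(p\wedge q)x(p\wedge q)=x(p\wedge q)$.

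Part (\ref{cor:infimolc}) is symmetric. Now $pxp=px$ and $qxq=qx$, so Remark \ref{condicionescomplemento} gives $p'xp'=xp'$ and $q'xq'=xq'$. Lemma \ref{Lemma:supremos}(\ref{Lemma:supremo_rcuadrado}) then gives $sxs=xs$, and complementing once more yields $(1-s)x(1-s)=(1-s)x$.

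The only point needing care is the De Morgan identity. If one prefers to avoid the abstract argument, it can also be checked directly from the formula $p\wedge q=p-\lp(p(1-q))$ in \cite[Proposition 1.15]{Be}, but the order-reversing involution argument is the approach I would write down. Everything else is a direct application of the cited remark and lemma.
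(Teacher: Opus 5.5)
Your proposal is correct and follows the paper's own argument: pass to complements with Remark~\ref{condicionescomplemento}, and apply Lemma~\ref{Lemma:supremos}(b) for part (a) and Lemma~\ref{Lemma:supremos}(a) for part (b) to $(1-p)\vee(1-q)$. You then complement back using $p\wedge q=1-\bigl((1-p)\vee(1-q)\bigr)$. Your justification of this identity via the order-reversing involution $r\mapsto 1-r$ on $E(\mathcal{A})$ also supplies a step the paper only states.
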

\begin{proof} ($\ref{cor:infimorc}$) follows from simple computations together with $p\wedge q= 1-((1-p)\vee (1-q))$, Remark \ref{condicionescomplemento} and Lemma \ref{Lemma:supremos}$(\ref{Lemma:supremolc})$; ($\ref{cor:infimolc}$) is a consequence of Remark \ref{condicionescomplemento} and Lemma \ref{Lemma:supremos}($\ref{Lemma:supremo_rcuadrado}$).  
\end{proof}

We now consider the down-sets of $b$ in $\mathcal{A}$ with respect to the orders $\rslc$, $\rsrc$ and $\rsc$. 

\begin{theorem}\label{thm:latticesrs} Let $\mathcal{A}$ be a regular Rickart *-ring and $b\in \mathcal{A}$. Then $[0, b]^{\irslc} $, $[0, b]^{\irsrc} $ and $[0, b]^{\irsc }$ are sublattices of $[0, b]^{\irs}$. Moreover, $[0, b]^{\irsc }$ is a sublattice of both $[0, b]^{\irslc}$ and $[0, b]^{\irsrc}$.   
\end{theorem}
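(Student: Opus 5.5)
We transfer the question to self-adjoint idempotents through the isomorphisms already established. By Theorem \ref{thm:isodecrecienteRestrella}, $\phi(a)=\rp(a)$ is an order-isomorphism from $[0,b]^{\irs}$ onto $E_b^{\irs}$, with inverse $\psi(p)=bp$. Since $E_b^{\irs}$ is a sublattice of $E(\mathcal A)$, the poset $[0,b]^{\irs}$ is a lattice. Its operations are
\[
a_1\vee a_2=b(\rp(a_1)\vee\rp(a_2)),\qquad a_1\wedge a_2=b(\rp(a_1)\wedge\rp(a_2)).
\]
By Theorem \ref{thm:isodecreRestrellaConCuadrados}, the maps $\overline{\phi}$ and $\overline{\psi}$ are restrictions of $\phi$ and $\psi$. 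Hence $[0,b]^{\alpha}$ is the image under $\psi$ of $E_b^{\alpha}$ for $\alpha\in\{\crslc,\crsrc,\crsc\}$. Moreover, $[0,b]^{\alpha}\subseteq[0,b]^{\irs}$ by (\ref{rsconcuadradosimplicanrs}). It therefore suffices to show that each $E_b^{\alpha}$ is closed under the meet and join of $E(\mathcal A)$, i.e.\ is a sublattice of $E_b^{\irs}$. The statement for $[0,b]^{\irs}$ then follows by applying $\psi$.

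Take $\alpha=\crslc$ and $p,q\in E_b^{\irslc}$, so $pb_rp=pb_r$ and $qb_rq=qb_r$. Since $E_b^{\irs}$ is a sublattice, $p\vee q$ and $p\wedge q$ are again $\le\rp(b)$. Applying Lemma \ref{Lemma:supremos}(\ref{Lemma:supremolc}) with $x=b_r$ gives $(p\vee q)b_r(p\vee q)=(p\vee q)b_r$, and Corollary \ref{cor:infimos}(\ref{cor:infimolc}) gives the same identity for $p\wedge q$. Thus $E_b^{\irslc}$ is a sublattice of $E_b^{\irs}$. The case $\crsrc$ is identical, using Lemma \ref{Lemma:supremos}(\ref{Lemma:supremo_rcuadrado}) and Corollary \ref{cor:infimos}(\ref{cor:infimorc}). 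Regularity enters only through these results. For $\crsc$, note that for a self-adjoint idempotent $p$ we have $pb_r=b_rp$ if and only if $pb_rp=pb_r$ and $pb_rp=b_rp$. So $E_b^{\irsc}=E_b^{\irslc}\cap E_b^{\irsrc}$, which is an intersection of two sublattices and hence a sublattice.

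For the last assertion: $E_b^{\irsc}$ is closed under the operations of $E(\mathcal A)$, and these coincide with the lattice operations of $E_b^{\irslc}$ and of $E_b^{\irsrc}$, since both are sublattices of $E(\mathcal A)$. Therefore $E_b^{\irsc}$ is a sublattice of each. Transporting through $\psi$, with (\ref{rsc iff rslc rsrc}) giving the inclusions $[0,b]^{\irsc}\subseteq[0,b]^{\irslc},[0,b]^{\irsrc}$, yields the claim.

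The main point to check carefully is that the joins and meets computed in the subposets agree with those of the ambient lattices. This holds because a subset closed under the ambient operations has those operations as its own supremum and infimum, so no new extremal elements can appear.
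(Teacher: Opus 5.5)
Your proof is correct and follows essentially the same route as the paper. You show that each $E_b^{\irslc}$ and $E_b^{\irsrc}$ is closed under the join and meet of $E(\mathcal A)$, using Lemma \ref{Lemma:supremos} and Corollary \ref{cor:infimos} with $x=b_r$, obtain $E_b^{\irsc}=E_b^{\irslc}\cap E_b^{\irsrc}$, and transport everything to the down-sets through $\psi$ and its restrictions; your closing remark, that closure under the ambient operations makes them the subposet's own joins and meets, only makes explicit what the paper leaves tacit.
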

\begin{proof} Let $p, q\in E_b^{\irs}$. We know that $p\vee q , p\wedge q \in E_b^{\irs}$. If $p, q\in E_b^{\irslc}$, then $p\vee q\in  E_b^{\irslc}$ and $p\wedge q\in  E_b^{\irslc}$ by Lemma \ref{Lemma:supremos}$(\ref{Lemma:supremolc})$ and Corollary \ref{cor:infimos}$(\ref{cor:infimolc})$, respectively. Hence, $E_b^{\irslc}$ is a sublattice of $E_b^{\irs}$. Therefore, by Theorem \ref{thm:isodecrecienteRestrella} and Theorem \ref{thm:isodecreRestrellaConCuadrados}$(\ref{isorslc})$, $ [0,b]^{\irslc}$ is a lattice and, moreover, a sublattice of $[0, b]^{\irs}$.

Analogously, by Lemma \ref{Lemma:supremos}($\ref{Lemma:supremo_rcuadrado}$) and Corollary \ref{cor:infimos}($\ref{cor:infimorc}$), if $p,q \in E_b^{\irsrc}$, then $p\vee q \in E_b^{\irsrc}$ and $p\wedge q \in E_b^{\irsrc}$. It follows that $E_b^{\irsrc}$ is a lattice and, moreover, a sublattice of $E_b^{\irs}$. Finally, Theorem \ref{thm:isodecrecienteRestrella} and Theorem \ref{thm:isodecreRestrellaConCuadrados}$(\ref{isorsrc})$ imply that $ [0,b]^{\irsrc}$ is a lattice that is a sublattice of $[0, b]^{\irs}$. 

The last assertion follows from the equally $E_b^{\irslc}\cap E_b^{\irsrc}= E_b^{\irsc}$ and the isomorphism given in Theorem \ref{thm:isodecreRestrellaConCuadrados}$(\ref{isorsc})$.
\end{proof}

The following result concerns the down-sets of $b$ with respect to the orders $\ls$, $\lslc$, $\lsrc$ and $\lsc$. 
\begin{theorem}\label{thm:latticesls} Let $\mathcal{A}$ be a regular Rickart *-ring and $b\in \mathcal{A}$. Then $[0, b]^{\ilslc} $, $[0, b]^{\ilsrc} $ and $[0, b]^{\ilsc }$ are lattices which, in addition, are sublattices of $[0, b]^{\ils}$. Furthermore, $[0, b]^{\ilsc }$ is a sublattice of both $[0, b]^{\ilslc}$ and $[0, b]^{\ilsrc}$.   
\end{theorem}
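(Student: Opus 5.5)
The plan is to transfer Theorem \ref{thm:latticesrs} to the left-sided orders through the involution, as was done for Theorem \ref{thm:isodecrecientelestrella}. Fix $b\in\mathcal{A}$ and consider the map $*\colon [0,b]^{\ils}\to[0,b^*]^{\irs}$, $a\mapsto a^*$. By Remark \ref{ls isomorfo rs} and Lemma \ref{isomorfismosintervalosbybestrella}(\ref{isointerle}), this map is an order-isomorphism. By Remark \ref{lsrc lsls isomorfos lslc lsrc}, it restricts to bijections
\[[0,b]^{\ilslc}\to[0,b^*]^{\irsrc},\qquad [0,b]^{\ilsrc}\to[0,b^*]^{\irslc},\qquad [0,b]^{\ilsc}\to[0,b^*]^{\irsc}.\]
Each of these is again an order-isomorphism for the corresponding orders (Lemma \ref{isomorfismosintervalosbybestrella}(b)--(d)). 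Note that $\mathcal{A}$ is regular if and only if every $b^*$ is regular, so Theorem \ref{thm:latticesrs} applies to $b^*$.

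The key step is the following general observation. Suppose $\Phi\colon L\to M$ is a lattice isomorphism, and $L'\subseteq L$, $M'\subseteq M$ are subsets with $\Phi(L')=M'$. If $M'$ is a sublattice of $M$, then $L'$ is a sublattice of $L$. Indeed, $\Phi$ preserves the binary joins and meets of $L$. For $x,y\in L'$ we then have $\Phi(x\vee y)=\Phi(x)\vee\Phi(y)\in M'$, so $x\vee y\in\Phi^{-1}(M')=L'$, and meets are handled the same way.

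Apply this with $L=[0,b]^{\ils}$ and $M=[0,b^*]^{\irs}$. Here $M$ is a lattice because it is isomorphic to $E^{\irs}_{b^*}$, which is a sublattice of $E(\mathcal{A})$ as noted at the start of Section \ref{sec:latticestructureofdownsets}. Hence $L$ is a lattice too. Theorem \ref{thm:latticesrs} says that $[0,b^*]^{\irsrc}$, $[0,b^*]^{\irslc}$ and $[0,b^*]^{\irsc}$ are sublattices of $[0,b^*]^{\irs}$. The observation then shows that $[0,b]^{\ilslc}$, $[0,b]^{\ilsrc}$ and $[0,b]^{\ilsc}$ are sublattices of $[0,b]^{\ils}$. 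In particular each is a lattice, whose operations are those inherited from $[0,b]^{\ils}$.

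For the final assertion I would not invoke Theorem \ref{thm:latticesrs} again. By (\ref{lsc iff lslc lsrc}), $[0,b]^{\ilsc}=[0,b]^{\ilslc}\cap[0,b]^{\ilsrc}$. All three sets are sublattices of the same lattice $[0,b]^{\ils}$, so their joins and meets coincide with those of $[0,b]^{\ils}$. The intersection of two sublattices is closed under these operations, so $[0,b]^{\ilsc}$ is a sublattice of each of $[0,b]^{\ilslc}$ and $[0,b]^{\ilsrc}$.

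I expect the only delicate point to be making sure the sublattice property refers to the operations of the ambient lattice $[0,b]^{\ils}$, and that these operations are carried correctly by $*$. This is exactly what the general observation guarantees, since an order-isomorphism between lattices preserves all existing joins and meets.
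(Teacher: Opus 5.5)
Your argument is correct, but it takes a different route from the paper.

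The paper works on the idempotent side. It applies Lemma \ref{Lemma:supremos} and Corollary \ref{cor:infimos} with $x=b_l$ to show directly that $E_b^{\ilslc}$ and $E_b^{\ilsrc}$ are closed under the joins and meets of $E_b^{\ils}$. It then obtains $E_b^{\ilsc}$ as their intersection and pulls everything back through the isomorphisms of Theorem \ref{thm:isodecrecientelestrella}.

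You instead treat the statement as the formal dual of Theorem \ref{thm:latticesrs}. Since $*$ is simultaneously an order-isomorphism $([0,b]^{\ils},\ls)\to([0,b^*]^{\irs},\rs)$ and, after restriction, onto $[0,b^*]^{\irsrc}$, $[0,b^*]^{\irslc}$ and $[0,b^*]^{\irsc}$, your general observation transfers the sublattice property with no new computation. This route is shorter and matches how the paper itself obtained Theorem \ref{thm:isodecrecientelestrella} and the left-sided part of Theorem \ref{thm:supeinfgeneral}. The paper's route is self-contained at the level of idempotents and keeps the formulas $a_1\vee a_2=(\lp(a_1)\vee\lp(a_2))b$ and $a_1\wedge a_2=(\lp(a_1)\wedge\lp(a_2))b$ in view. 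You can recover these by applying $*$ to $b^*(\rp(a_1^*)\vee\rp(a_2^*))$ and to the corresponding meet.

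Both arguments leave one point implicit. For ``$[0,b]^{\ilslc}$ is a lattice'' (under $\lslc$) and ``it is a sublattice of $[0,b]^{\ils}$'' (under $\ls$) to describe the same structure, $\lslc$ and $\ls$ must agree on $[0,b]^{\ilslc}$. In your setup this follows from the right-sided fact: for $a_1,a_2\in[0,b]^{\ilslc}$ we have $a_1\lslc a_2\iff a_1^*\rsrc a_2^*\iff a_1^*\rs a_2^*\iff a_1\ls a_2$. The middle equivalence holds because $\overline{\phi}$ in Theorem \ref{thm:isodecreRestrellaConCuadrados} is a restriction of $\phi$, and both are order-isomorphisms onto subsets of $E(\mathcal{A})$ ordered by $\leq$.
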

\begin{proof} Let $p, q\in E_b^{\ils}$. We know that $p\vee q$ and $p\wedge q$ exist in $E_b^{\ils}$. If $p, q\in E_b^{\ilslc}$, then $p\vee q\in  E_b^{\ilslc}$, by Lemma \ref{Lemma:supremos}$(\ref{Lemma:supremolc})$, and $p\wedge q\in  E_b^{\ilslc}$, by Corollary \ref{cor:infimos}$(\ref{cor:infimolc})$. Thus, $E_b^{\ilslc}$ is a sublattice of $E_b^{\ils}$. Similarly, if $p, q \in E_b^{\ilsrc}$, then $p\vee q \in E_b^{\ilsrc}$, by Lemma \ref{Lemma:supremos}($\ref{Lemma:supremo_rcuadrado}$), and $p\wedge q \in E_b^{\ilsrc}$, by Corollary \ref{cor:infimos}($\ref{cor:infimorc}$). Consequently, $E_b^{\ilsrc}$ is a sublattice of $E_b^{\ils}$. Therefore, by the isomorphisms given in Theorem \ref{thm:isodecrecientelestrella}, we obtain that $ [0,b]^{\ilslc}$ and $ [0,b]^{\ilsrc}$ are lattices and, moreover, sublattices of $[0, b]^{\ils}$. 

Taking into account that $E_b^{\ilslc}$ and $E_b^{\ilsrc}$ are lattices and the equality $E_b^{\ilslc}\cap E_b^{\ilsrc}= E_b^{\ilsc}$, we have that $E_b^{\ilsc}$ is a lattice that is also a sublattice of the posets $E_b^{\ilslc}$, $E_b^{\ilsrc}$ and $E_b^{\ils}$. Finally, the proof is completed using the isomorphisms given in Theorem \ref{thm:isodecrecientelestrella}. 
\end{proof}

Finally, we analyze the cases in which the down-set of $b$ is ordered with respect to the partial orders $\scu$, $\slc$ or $\src$.

\begin{corollary}\label{cor:sublatticesstar} Let $\mathcal{A}$ be a regular Rickart *-ring and $b\in \mathcal{A}$. Then $[0, b]^{\islc} $, $[0, b]^{\isrc} $ and $[0, b]^{\isc }$ are lattices. Moreover, $[0, b]^{\isc }$ is a sublattice of both $[0, b]^{\islc}$ and $[0, b]^{\isrc}$.   
\end{corollary}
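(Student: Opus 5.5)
Since $[0,b]^{\islc}$, $[0,b]^{\isrc}$ and $[0,b]^{\isc}$ are isomorphic to $E^{\islc}_{b}$, $E^{\isrc}_{b}$ and $E^{\isc}_{b}$ by Theorem \ref{thm:isodecrecienteEstrellalcuadrado}, it suffices to prove the statement for these subsets of $E(\mathcal{A})$. The plan is to show that each of them is closed under the operations $\vee$ and $\wedge$ of $E(\mathcal{A})$. Then each is a lattice with the same meets and joins, and $E^{\isc}_{b}=E^{\islc}_{b}\cap E^{\isrc}_{b}$ is automatically a sublattice of both.

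Let $p,q\in E^{\islc}_{b}$. Theorem \ref{thm:latticesrs} (through Lemma \ref{Lemma:supremos}(\ref{Lemma:supremolc}) and Corollary \ref{cor:infimos}(\ref{cor:infimolc}) with $x=b_r$) shows that $p\vee q$ and $p\wedge q$ lie in $E^{\irslc}_{b}$. Hence they are $\leq \rp(b)$ and satisfy the condition involving $b_r$. The same argument, using part (a) of each result, works for $E^{\isrc}_{b}$. So the only new point is the commutation of $p\vee q$ and $p\wedge q$ with $c:=b^*b$ whenever $p$ and $q$ commute with $c$.

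This commutation step is the expected obstacle, and I would handle it with the formulas $p\vee q=q+\rp(p(1-q))$ and $p\wedge q=1-((1-p)\vee(1-q))$. Because $1-p$ and $1-q$ commute with $c$ whenever $p$ and $q$ do, it is enough to treat the join. Put $y=p(1-q)$; then $y$ commutes with $c$ and $q$ commutes with $c$, so it remains to show that $\rp(y)$ commutes with $c$.

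Using regularity, $\rp(y)=y^\dag y$. One route is to apply Lemma \ref{Lemma:supremos} twice with $x=c$: since $p$ commutes with $c$, we have both $pcp=cp$ and $pcp=pc$, and the same holds for $q$. Parts (a) and (b) then give
\[(p\vee q)c(p\vee q)=c(p\vee q)=(p\vee q)c.\]
Likewise Corollary \ref{cor:infimos}(a),(b) give $(p\wedge q)c(p\wedge q)=c(p\wedge q)=(p\wedge q)c$. This settles commutation with no further computation, so I would use it as the main argument.

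If one prefers to avoid the lemma, a fallback is to argue that $y^\dag$ commutes with $c$ whenever $y$ and $y^*$ do, because $y^\dag$ lies in the double commutant of $\{y,y^*\}$; here $y^*=(1-q)p$ also commutes with $c$. Either way, $p\vee q$ and $p\wedge q$ belong to $E^{\islc}_{b}$ (respectively $E^{\isrc}_{b}$), and intersecting gives the result for $E^{\isc}_{b}$. Transporting back along the isomorphisms of Theorem \ref{thm:isodecrecienteEstrellalcuadrado} completes the proof.
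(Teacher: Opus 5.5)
Your proof is correct, but it takes a different route from the paper. The paper stays on the ring side. It writes each star-type down-set as an intersection, e.g.\ $[0,b]^{\islc}=[0,b]^{\ilslc}\cap[0,b]^{\irslc}$, and appeals to Theorems \ref{thm:latticesrs} and \ref{thm:latticesls}, never using the condition $pb^*b=b^*bp$.

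You instead work through the single isomorphism $a\mapsto\rp(a)$ of Theorem \ref{thm:isodecrecienteEstrellalcuadrado}. You show that $E^{\islc}_b$ and $E^{\isrc}_b$ are closed under the operations of $E(\mathcal{A})$. The $b_r$-conditions are inherited from the proof of Theorem \ref{thm:latticesrs}. Your key observation is that, for $e\in E(\mathcal{A})$, the identity $ec=ce$ is equivalent to the pair $ece=ce$, $ece=ec$. Hence Lemma \ref{Lemma:supremos} and Corollary \ref{cor:infimos}, applied with $x=c=b^*b$, settle commutation at once. The reduction to $\rp(p(1-q))$ and the double-commutant fallback are then unnecessary.

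Your route buys two things. It is self-contained, and it gives the operations explicitly as $b(\rp(a_1)\vee\rp(a_2))$ and $b(\rp(a_1)\wedge\rp(a_2))$. It also shows that $E^{\is}_b$ itself is a sublattice of $E(\mathcal{A})$ in the regular case.

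The paper's route is shorter, but it leaves a step implicit. Passing from ``both one-sided down-sets are lattices'' to ``their intersection is a lattice for $\slc$'' tacitly requires that the two joins agree on the intersection: the $\lslc$-join $(\lp(a_1)\vee\lp(a_2))b$ and the $\rslc$-join $b(\rp(a_1)\vee\rp(a_2))$, and likewise for meets. The paper does not spell this out, and your argument sidesteps it.

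One point to make explicit in your write-up: the isomorphism in Theorem \ref{thm:isodecrecienteEstrellalcuadrado}(\ref{thmitem:isc}) is the restriction of those in (\ref{thmitem:islciso}) and (\ref{thmitem:isrciso}). That is what transfers closure of $E^{\isc}_b$ to the sublattice claim for $[0,b]^{\isc}$.
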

\begin{proof} It follows from Theorem \ref{thm:latticesrs}, Theorem \ref{thm:latticesls},  taking into account the equalities $[0, b]^{\isc}=[0, b]^{\ilsc}\cap [0, b]^{\irsc}$, $ [0, b]^{\islc}=[0, b]^{\ilslc}\cap [0, b]^{\irslc} $, and $[0, b]^{\isrc}=[0, b]^{\ilsrc}\cap [0, b]^{\irsrc}$. The last assertion follows from the equality $[0, b]^{\isc }= [0, b]^{\islc}\cap [0, b]^{\isrc}$.    
\end{proof}

Let us observe that, by definition of $\s$, we have that $a\s 1$ if and only if $a\in  E(\mathcal{A})$. In fact, $a^*a=a=aa^*$ and $aa^*=a^*=a^*a$, which imply that $a=a^*=a^2$ (see also \cite[Lemma 3.1]{Ci2}) . Moreover, if $\alpha\in \{\cscu, \cslc, \csrc\}$, then $[0,1]^\alpha\simeq E(\mathcal{A})$.

\begin{remark}\label{remarksupeinfenintervalos} By Theorem \ref{thm:latticesrs}, Theorem \ref{thm:latticesls} and Corollary \ref{cor:sublatticesstar}, if $\mathcal {A}$ is a regular Rickart *-ring and $a_1, a_2\stackrel{\alpha}{\preceq}b$, then both the supremum $a_1\vee a_2$ and the infimum $a_1\wedge a_2$ exist in the down-set $[0,b]^\alpha$. For any $\alpha\in \{\crslc, \crsrc, \crsc, \cslc, \csrc,  \cscu \}$, we know that $a_i= b\rp(a_i)$ by Theorem \ref{thm:isodecreRestrellaConCuadrados} and Theorem \ref{thm:isodecrecienteEstrellalcuadrado}, and so
\[a_1\vee a_2=b(\rp(a_1)\vee \rp(a_2))=b(a_1^\dag a_1\vee a_2^\dag a_2) \]
and
\[a_1\wedge a_2=b(\rp(a_1)\wedge \rp(a_2))= b(a_1^\dag a_1\wedge a_2^\dag a_2).  \] 
While for any $\alpha\in \{\ils, \clslc, \clsrc, \clsc\}$, we have that $a_i=\lp(a_i)b$ by Theorem \ref{thm:isodecrecientelestrella} and then 
\[a_1\vee a_2=(\lp(a_1)\vee \lp(a_2))b=(a_1a_1^\dag \vee a_2a_2^\dag )b \]
and
\[a_1\wedge a_2=(\lp(a_1)\wedge \lp(a_2))b= (a_1a_1^\dag \wedge a_2a_2^\dag )b.  \]

\end{remark}

We now study the existence of supremum and infimum of two elements in a regular Rickart *-ring ordered by each of the partial orders $\stackrel{\alpha}{\preceq}$ defined in this paper. C\={\i}rulis proved that a Rickart *-ring ordered by either $\s$ or $\rs$ has the upper bound property in \cite[Theorem 4.2]{Ci2} and \cite[Theorem 4.6]{CiarXiv}, respectively. Moreover, he proved that if $a_1, a_2\stackrel{\alpha}{\preceq} b$ then $a_1\vee a_2=b(\rp(a_1)\vee \rp(a_2))$ and $a_1\wedge a_2=b(\rp(a_1)\wedge \rp(a_2))$ where $\alpha\in \{\irs,\is \}$. Let us observe that the upper bound property also holds if the Rickart *-ring $\mathcal{A}$ is ordered by $\ls$, since $\left(\mathcal{A}, \ls\right)$ is order-isomorphic to $\left(\mathcal{A}, \rs\right)$ (see Remark \ref{ls isomorfo rs}). But in this case, if $b$ is an upper bound for $a_1$ and $a_2$ then the supremum and the infimum of $\{a_1, a_2\}$ in $\mathcal{A}$ are given by $a_1\vee a_2=(\lp(a_1)\vee \lp(a_2))b=(a_1a_1^\dag \vee a_2a_2^\dag )b$ and $a_1\wedge a_2=(\lp(a_1)\wedge \lp(a_2))b= (a_1a_1^\dag \wedge a_2a_2^\dag )b  $. 

\begin{theorem}\label{thm:supeinfgeneral} Let $\mathcal {A}$ be a regular Rickart *-ring ordered. For any $\stackrel{\alpha}{\preceq}$, if $a_1, a_2 \stackrel{\alpha}{\preceq} b$ then both $a_1\vee a_2$ and $a_1\wedge a_2$ exist in $\mathcal{A}$. Moreover,
\[a_1\vee a_2=b(\rp(a_1)\vee \rp(a_2))=b(a_1^\dag a_1\vee a_2^\dag a_2) \]
and
\[a_1\wedge a_2=b(\rp(a_1)\wedge \rp(a_2))= b(a_1^\dag a_1\wedge a_2^\dag a_2)  \]
whenever $\alpha\in \{ \crslc, \crsrc,\crsc, \cslc, \csrc,  \cscu \}$, while

\[a_1\vee a_2=(\lp(a_1)\vee \lp(a_2))b=(a_1a_1^\dag \vee a_2a_2^\dag )b \]
and
\[a_1\wedge a_2=(\lp(a_1)\wedge \lp(a_2))b= (a_1a_1^\dag \wedge a_2a_2^\dag )b  \]
whenever $\alpha\in \{\clslc, \clsrc, \clsc\}$.

\end{theorem}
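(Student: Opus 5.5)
We reduce the global statement to the local lattice structure of Remark \ref{remarksupeinfenintervalos}. Fix $\alpha$ and $a_1,a_2\stackrel{\alpha}{\preceq} b$, and let $s$ and $t$ be the supremum and infimum of $\{a_1,a_2\}$ in $[0,b]^\alpha$. These exist by Theorems \ref{thm:latticesrs}, \ref{thm:latticesls} and Corollary \ref{cor:sublatticesstar}, and they are given by the displayed formulas. Because $[0,b]^\alpha$ is a down-set, any lower bound $y$ of $\{a_1,a_2\}$ in $\mathcal{A}$ satisfies $y\stackrel{\alpha}{\preceq}a_1\stackrel{\alpha}{\preceq} b$, so $y\in[0,b]^\alpha$ and $y\stackrel{\alpha}{\preceq} t$. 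Hence $t=a_1\wedge a_2$ in $\mathcal{A}$ immediately. The whole difficulty lies with the supremum: we must show that $s\stackrel{\alpha}{\preceq} c$ for every common upper bound $c$ of $a_1,a_2$ in $\mathcal{A}$, and such a $c$ need not be comparable with $b$.

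Let $c$ be such an upper bound. The key step is to show that the supremum computed in $[0,c]^\alpha$ coincides with $s$. Consider first $\alpha\in\{\crslc,\crsrc,\crsc\}$. We have $a_i=b\rp(a_i)=c\rp(a_i)$. Put $p=\rp(a_1)\vee\rp(a_2)$, which is the same element of $E(\mathcal{A})$ whether we view it inside $E_b^{\irs}$ or $E_c^{\irs}$. We need $bp=cp$. By C\={\i}rulis' result for $\rs$ (\cite[Theorem 4.6]{CiarXiv}, quoted before the statement), the supremum of $a_1,a_2$ in $(\mathcal{A},\rs)$ is $bp$ computed via $b$, and equally $cp$ computed via $c$. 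Uniqueness of the supremum then gives $bp=cp=:s$. For a self-contained check, note that $(b-c)\rp(a_i)=0$, so $\rp(a_i)\le 1-\rp(b-c)$ as projections. Therefore $p\le 1-\rp(b-c)$, which yields $(b-c)p=0$.

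Next, by Remark \ref{remarksupeinfenintervalos} applied to $c$, the supremum of $a_1,a_2$ in $[0,c]^\alpha$ is $c(\rp(a_1)\vee\rp(a_2))=cp=s$. In particular $s\stackrel{\alpha}{\preceq} c$. Since $c$ was an arbitrary upper bound and $s$ is itself an upper bound, $s=a_1\vee a_2$ in $\mathcal{A}$. Both descriptions of $s$ follow from $\rp(a_i)=a_i^\dag a_i$. The cases $\alpha\in\{\cslc,\csrc,\cscu\}$ are handled by the same argument, because $\slc$-type relations imply $\rs$ and $a_i=c\rp(a_i)$ still holds. For the left orders $\alpha\in\{\clslc,\clsrc,\clsc\}$ we either repeat the argument with $\lp$ and the identity $a_i=\lp(a_i)b=\lp(a_i)c$, or transfer the result through the involution isomorphisms of Remark \ref{lsrc lsls isomorfos lslc lsrc}. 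In the latter route, $\rp(a_i^*)=\lp(a_i)$ and $b^*(\lp(a_1)\vee\lp(a_2))$ is mapped by $*$ to $(\lp(a_1)\vee\lp(a_2))b$.

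The main obstacle is this independence of $s$ from the chosen upper bound, namely $bp=cp$. I expect it to go through via the annihilator argument: $\rp(b-c)$ is orthogonal to each $\rp(a_i)$, so $1-\rp(b-c)$ dominates both and hence dominates their join. This uses only the lattice property of $E(\mathcal{A})$.
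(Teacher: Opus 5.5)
Your proof is correct, but it is organized differently from the paper's.

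The paper fixes the local supremum $s=b(\rp(a_1)\vee\rp(a_2))$ from Remark~\ref{remarksupeinfenintervalos}. It then shows that an $\alpha$-upper bound $d$ of $\{a_1,a_2\}$ with $d\stackrel{\alpha}{\preceq} s$ must equal $s$, by comparing with C\={\i}rulis' supremum for $\s$ (resp.\ $\rs$). The infimum is handled the same way, and the left orders are transported by $*$. Taken literally, that argument only shows that $s$ is a minimal upper bound and $t$ a maximal lower bound; this much already follows from the lattice structure of $[0,b]^\alpha$ and transitivity. Being the least upper bound requires $s\stackrel{\alpha}{\preceq} c$ for every upper bound $c$, including those not comparable with $b$. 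Knowing $s\s c$ or $s\rs c$ does not by itself give the extra square condition.

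Your proof supplies exactly this step. For $p=\rp(a_1)\vee\rp(a_2)$ you show $bp=cp$, either from uniqueness of the $\rs$-supremum or, self-containedly and without regularity, from $\rp(a_i)\le 1-\rp(b-c)$. This identifies $s$ with the supremum of $\{a_1,a_2\}$ in the lattice $[0,c]^\alpha$, so $s\stackrel{\alpha}{\preceq} c$. Your treatment of the infimum, via the down-set property of $[0,b]^\alpha$, needs no external input at all.

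So the paper's route is shorter and leans on the known results for $\s$ and $\rs$. Yours establishes the universal property against arbitrary bounds, and it shows that the only global ingredient is that $cp$ does not depend on which upper bound $c$ is chosen.
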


\begin{proof} We first consider $\alpha\in \{ \cslc, \csrc,  \cscu \}$. Let $a_1, a_2, b\in \mathcal{A}$ be such that $a_i \stackrel{\alpha}{\preceq}b$, for $i\in \{1,2\}$. By Remark \ref{remarksupeinfenintervalos}, $a_i \stackrel{\alpha}{\preceq} b(\rp(a_1)\vee \rp(a_2))$. Assume that there exists $d$ such that $a_i\stackrel{\alpha}{\preceq} d \stackrel{\alpha}{\preceq} b(\rp(a_1)\vee \rp(a_2))$. In particular, $d \s b(\rp(a_1)\vee \rp(a_2))$. Since $b(\rp(a_1)\vee \rp(a_2))$ is the least upper bound of $\{a_1, a_2\}$ with respect to $\s$ by \cite[Theorem 4.2]{Ci2}, it follows that $d= b(\rp(a_1)\vee \rp(a_2))$. Therefore, $a_1\vee a_2= b(\rp(a_1)\vee \rp(a_2)) $ when $\alpha\in \{ \cslc, \csrc,  \cscu \}$. To study the infimum, Remark \ref{remarksupeinfenintervalos} yields $b(\rp(a_1)\wedge \rp(a_2))\stackrel{\alpha}{\preceq} a_i$. Suppose that there exists $c\in \mathcal{A}$ such that $ b(\rp(a_1)\wedge  \rp(a_2)) \stackrel{\alpha}{\preceq} c \stackrel{\alpha}{\preceq} a_i$. Then, $b(\rp(a_1)\wedge\rp(a_2))\s c$. Since $b(\rp(a_1)\wedge \rp(a_2))$ is the greatest lower bound of $\{a_1, a_2\}$ with respect to $\s$ by \cite[Theorem 4.2]{Ci2}, we obtain $c= b(\rp(a_1)\wedge \rp(a_2))$. Thus, $a_1\wedge a_2= b(\rp(a_1)\wedge \rp(a_2))$, when $\alpha\in \{ \cslc, \csrc,  \cscu \}$.

An analogous argument to the previous one proves the existence of the supremum and the infimum in  $\mathcal{A}$ when $\alpha\in \{\crslc, \crsrc, \crsc\}$. If $a_1$ and $a_2$ have a common upper bound $b$, then $b(\rp(a_1)\vee \rp(a_2))$ and $b(\rp(a_1)\wedge \rp(a_2))$ are respectively the supremum and infimum in the down-set of $b$ when $\alpha\in \{\crslc, \crsrc, \crsc\}$, by Remark \ref{remarksupeinfenintervalos}. The result follows by taking into account that these elements are also the supremum and infimum with respect to the right star partial order by \cite[Theorem 4.6]{CiarXiv}. 

We know that the involution $*$ induces the isomorphisms $\left(\mathcal{A}, \lslc\right)\simeq \left(\mathcal{A}, \rsrc\right)$, $\left(\mathcal{A}, \lsrc\right)\simeq \left(\mathcal{A}, \rslc\right)$ and $\left(\mathcal{A}, \lsc\right)\simeq \left(\mathcal{A}, \rsc\right)$. Hence, it is straightforward to see that $a_1\vee a_2=(\lp(a_1)\vee \lp(a_2))b$ and $a_1\wedge a_2=(\lp(a_1)\wedge \lp(a_2))b$ if $b$ is an upper bound of $a_i$, as a consequence of the existence of the supremum and infimum for $ \{\crslc, \crsrc, \crsc\}$.  
 \end{proof}

Recall that a Baer *-ring is a *-ring $\mathcal{A}$ such that the right annihilator of each subset of $\mathcal{A}$ is a principal right ideal generated by a self-adjoint idempotent. Moreover, $\mathcal{A}$ is a Baer *-ring if and only if $\mathcal{A}$ is a Rickart *-ring whose lattice $E(\mathcal{A})$ is complete \cite[Proposition 1.24]{Be}. 

The following generalizations of Theorem \ref{thm:latticesrs}, Corollary \ref{cor:sublatticesstar} and Theorem \ref{thm:latticesls} for regular Baer *-rings hold.

\begin{theorem}\label{thm:supInfBaerrsys} Let $\mathcal{A}$ be a \emph{regular} Baer *-ring ordered by any $\stackrel{\alpha}{\preceq}$ with $\alpha\in \{\crslc, \crsrc, \crsc, \cslc, \csrc, \cscu\}$. If $b$ is an upper bound of a nonempty subset $B\subseteq \mathcal{A}$ then:
\begin{enumerate}[(a)]
 \item $b\left( \bigvee \{\rp(c): c\in B\}\right)$ is the least upper bound of $B$ in $[0, b]^\alpha$. 
\item $b\left(\bigwedge \{\rp(c): c\in B\}\right)$ is the greatest lower bound of $B$ in $[0, b]^\alpha$.
 \end{enumerate} 
\end{theorem}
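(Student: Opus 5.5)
We transfer the problem to self-adjoint idempotents through the order-isomorphisms of Section \ref{sec:iso_down-sets}. For $\alpha\in\{\crslc,\crsrc,\crsc\}$, Theorem \ref{thm:isodecreRestrellaConCuadrados} gives $[0,b]^\alpha\simeq E^\alpha_b$ via $a\mapsto\rp(a)$, with inverse $p\mapsto bp$. For $\alpha\in\{\cslc,\csrc,\cscu\}$, Theorem \ref{thm:isodecrecienteEstrellalcuadrado} gives the same maps. Since $B\subseteq[0,b]^\alpha$, it suffices to show that $p_0=\bigvee\{\rp(c):c\in B\}$ and $q_0=\bigwedge\{\rp(c):c\in B\}$, computed in the complete lattice $E(\mathcal{A})$ (\cite[Proposition 1.24]{Be}), belong to $E^\alpha_b$. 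Once this is done, they are automatically the supremum and infimum of $\{\rp(c):c\in B\}$ inside $E^\alpha_b$. Applying the isomorphism $p\mapsto bp$ then yields (a) and (b).

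The membership conditions to check are as follows.
\begin{itemize}
\item $p_0,q_0\le\rp(b)$. This is immediate because each $\rp(c)\le\rp(b)$, and $B$ is nonempty.
\item For the star cases, $p_0$ and $q_0$ must commute with $b^*b$.
\item For $\ell\scalebox{0.47}{$\square$}$, the condition $pb_rp=pb_r$.
\item For $r\scalebox{0.47}{$\square$}$, the condition $pb_rp=b_rp$.
\item For $\scalebox{0.47}{$\square$}$, the condition $pb_r=b_rp$.
\end{itemize}
The last condition is the conjunction of the two one-sided ones, so it reduces to those.

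The first step is commutation. If every $p_i$ commutes with a fixed $x$, then so do $\bigvee p_i$ and $\bigwedge p_i$. The natural argument is that the commutant $\{x\}'$ of a self-adjoint $x=b^*b$ is closed under $*$. It is therefore a $*$-subring, and the standard Baer-ring fact (\cite{Be}) says that suprema of projections in a double-commutant-type subring stay there. A concrete route avoids that machinery. Note that $p=\bigvee p_i$ is characterized by $(1-p)\mathcal{A}=\bigcap_i(1-p_i)\mathcal{A}$, the right annihilator of $\{p_i\}$. If $x$ commutes with each $p_i$, then $x$ maps the right annihilator of $\{p_i\}$ into itself, and so does $x^*$. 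Hence $x(1-p)=(1-p)x(1-p)$ and $x^*(1-p)=(1-p)x^*(1-p)$. Taking adjoints of the second gives $(1-p)x=(1-p)x(1-p)$, so $x$ commutes with $p$. Infima then follow by complementation, using $\bigwedge p_i=1-\bigvee(1-p_i)$.

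The main step, and the expected obstacle, is the one-sided invariance conditions for arbitrary families. These generalize Lemma \ref{Lemma:supremos} and Corollary \ref{cor:infimos}, whose proofs used the explicit binary formula for $p\vee q$. That formula is not available for infinite families. Instead, I would use the invariance reformulation. The condition $pxp=xp$ says $xp\mathcal{A}\subseteq p\mathcal{A}$, that is, $p\mathcal{A}$ is $x$-invariant. Equivalently, by Remark \ref{condicionescomplemento}, $(1-p)\mathcal{A}$ is $x^*$-invariant. Now suppose $p_ixp_i=xp_i$ for all $i$. Then each $(1-p_i)\mathcal{A}$ is $x^*$-invariant. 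The intersection $\bigcap_i(1-p_i)\mathcal{A}=(1-p)\mathcal{A}$, with $p=\bigvee p_i$, is then $x^*$-invariant. This gives $(1-p)x^*(1-p)=x^*(1-p)$, and taking adjoints, $(1-p)x(1-p)=(1-p)x$. By Remark \ref{condicionescomplemento} again, this is exactly $pxp=xp$.

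For $pxp=px$ we apply the same argument to $x^*$. For infima we use complements: $q=\bigwedge p_i$ satisfies $q\mathcal{A}=\bigcap p_i\mathcal{A}$, and an intersection of $x$-invariant right ideals is $x$-invariant. The identity $q\mathcal{A}=\bigcap p_i\mathcal{A}$ follows from $1-q=\bigvee(1-p_i)$ and the annihilator description above. Regularity is not essential for this step, but it is harmless. Applying these facts with $x=b_r$ settles all three conditions, and combining them with the isomorphisms completes the proof.
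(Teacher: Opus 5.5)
Your proof is correct, and it takes a genuinely different route from the paper.

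The paper's proof is a one-line appeal to the arguments of Theorem~\ref{thm:latticesrs} and Corollary~\ref{cor:sublatticesstar}. Those in turn rest on Lemma~\ref{Lemma:supremos} and Corollary~\ref{cor:infimos}, which are proved from the explicit binary formula $p\vee q=q+(p(1-q))^\dag p(1-q)$. That formula is exactly where regularity enters, and it has no direct analogue for arbitrary families. For the star orders, the paper also argues through intersections of left and right down-sets.

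You replace that key lemma by two ingredients: the annihilator description $(1-\bigvee_i p_i)\mathcal{A}=\bigcap_i(1-p_i)\mathcal{A}$, valid in any Baer *-ring, and the reformulation of $pxp=xp$ as $x$-invariance of $p\mathcal{A}$, equivalently $x^*$-invariance of $(1-p)\mathcal{A}$. Since invariance, and hence commutation, passes to intersections of right ideals, every defining condition of the sets $E^\alpha_b$ of Definitions~\ref{dfn:Ers} and~\ref{dfn:Es} survives arbitrary suprema and infima taken in $E(\mathcal{A})$. The isomorphisms of Theorems~\ref{thm:isodecreRestrellaConCuadrados} and~\ref{thm:isodecrecienteEstrellalcuadrado} then finish the argument, with the star cases handled directly through commutation with $b^*b$.

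Your route gives an actual proof of the infinite case, which is the step the paper leaves implicit. It also shows that regularity is not needed: the statement holds in every Baer *-ring. The same argument with two projections removes regularity from the down-set lattice results (Theorems~\ref{thm:latticesrs}, \ref{thm:latticesls} and Corollary~\ref{cor:sublatticesstar}), because $\{p,q\}^\circ=(1-p\vee q)\mathcal{A}$ already holds in any Rickart *-ring. What the paper's approach offers in exchange is explicit Moore--Penrose formulas for the binary lattice operations.
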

\begin{proof} Taking into account that $\bigvee \{\rp(c): c\in B\}$ and $\bigwedge \{\rp(c): c\in B\}$ exist, the result follows analogously to the proof of Theorem \ref{thm:latticesrs} and Corollary \ref{cor:sublatticesstar}. 
 \end{proof}

\begin{theorem}\label{thm:supremoRegularBaerDowntset} Let $\mathcal{A}$ be a \emph{regular} Baer *-ring ordered by any $\stackrel{\alpha}{\preceq}$ with $\alpha\in \{\ils, \clslc, \clsrc, \clsc\}$. If $b$ is an upper bound of a nonempty subset $B\subseteq \mathcal{A}$ then:
\begin{enumerate}[(a)]
 \item $\left(\bigvee \{\lp(c): c\in B\}\right) b$ is the least upper bound of $B$ in $[0, b]^\alpha$. 
\item $\left( \bigwedge \{\lp(c): c\in B\}\right)b$ is the greatest lower bound of $B$ in $[0, b]^\alpha$.
 \end{enumerate} 
\end{theorem}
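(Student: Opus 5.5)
The natural route is to transfer the statement through the involution to Theorem \ref{thm:supInfBaerrsys}, for the orders $\irs$, $\crsrc$, $\crslc$ and $\crsc$. Remark \ref{ls isomorfo rs} and Remark \ref{lsrc lsls isomorfos lslc lsrc} show that $*$ is an order-isomorphism in each of the following cases:
\begin{itemize}
\item $(\mathcal{A},\ls)\to(\mathcal{A},\rs)$;
\item $(\mathcal{A},\lslc)\to(\mathcal{A},\rsrc)$;
\item $(\mathcal{A},\lsrc)\to(\mathcal{A},\rslc)$;
\item $(\mathcal{A},\lsc)\to(\mathcal{A},\rsc)$.
\end{itemize}
Write $\alpha'$ for the right-hand counterpart of $\alpha$. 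If $b$ is an upper bound of $B$ for $\stackrel{\alpha}{\preceq}$, then $b^*$ is an upper bound of $B^*=\{c^*: c\in B\}$ for $\stackrel{\alpha'}{\preceq}$. Moreover, $*$ maps $[0,b]^\alpha$ isomorphically onto $[0,b^*]^{\alpha'}$, as in Lemma \ref{isomorfismosintervalosbybestrella}. Hence suprema and infima of $B$ in $[0,b]^\alpha$ correspond exactly under $*$ to those of $B^*$ in $[0,b^*]^{\alpha'}$.

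For $\alpha'\in\{\crslc,\crsrc,\crsc\}$, Theorem \ref{thm:supInfBaerrsys} gives that $b^*\bigl(\bigvee\{\rp(c^*):c\in B\}\bigr)$ is the least upper bound of $B^*$ in $[0,b^*]^{\alpha'}$. The meet $b^*\bigl(\bigwedge\{\rp(c^*):c\in B\}\bigr)$ is the greatest lower bound. Since $\rp(c^*)=\lp(c)$ by Lemma \ref{Lemma:propiedadesdellpyrp}(\ref{prop:rpa*lpa}), and arbitrary joins and meets of self-adjoint idempotents are self-adjoint, applying $*$ yields the two formulas $\bigl(\bigvee\{\lp(c):c\in B\}\bigr)b$ and $\bigl(\bigwedge\{\lp(c):c\in B\}\bigr)b$.

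The case $\alpha=\ils$ needs separate handling, because Theorem \ref{thm:supInfBaerrsys} does not cover $\irs$. There I would argue directly through Theorem \ref{thm:isodecrecientelestrella}(a): $[0,b]^{\ils}\simeq E^{\ils}_b$ via $a\mapsto\lp(a)$, with inverse $p\mapsto pb$. Since $E(\mathcal{A})$ is complete in a Baer *-ring, and $E^{\ils}_b=\{p: p\le\lp(b)\}$ is closed under arbitrary joins and meets, $E^{\ils}_b$ is a complete lattice. Transporting through the isomorphism gives the formulas, since $c=\lp(c)b$ for $c\in B$.

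Alternatively, the four left cases can be done uniformly without $*$. Theorem \ref{thm:isodecrecientelestrella} reduces the problem to showing that $E^{\alpha}_b$ is closed under arbitrary joins and meets in $E(\mathcal{A})$. This requires infinite versions of Lemma \ref{Lemma:supremos} and Corollary \ref{cor:infimos}, and that is the main obstacle, since the finite proof used the explicit formula $p\vee q=q+(p(1-q))^\dag p(1-q)$. The transfer via $*$ avoids this by relying on Theorem \ref{thm:supInfBaerrsys}. It still needs closure of $E^{\alpha'}_{b^*}$ under infinite joins and meets, a fact that theorem implicitly depends on.

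If I had to verify that closure, I would note that $pxp=xp$ means $p\mathcal{A}$ is invariant under left multiplication by $x$. An arbitrary join $\bigvee p_i$ is the projection generating the smallest annihilator right ideal containing all $p_i\mathcal{A}$, and $x$-invariance passes to it by annihilator arguments. Meets then follow via complements, using Remark \ref{condicionescomplemento}.
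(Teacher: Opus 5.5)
Your proposal is correct, but its main route differs from the paper's.

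The paper argues directly, ``analogously to Theorem~\ref{thm:latticesls}''. It uses the isomorphism $[0,b]^\alpha\simeq E^\alpha_b$ of Theorem~\ref{thm:isodecrecientelestrella}, together with the tacit fact that $E^\alpha_b$ is closed under arbitrary joins and meets of $E(\mathcal{A})$. That fact amounts to infinite analogues of Lemma~\ref{Lemma:supremos} and Corollary~\ref{cor:infimos}, which the paper never proves. Its finite versions rest on the Moore--Penrose formula for $p\vee q$, which has no counterpart for infinite families.

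You instead transport everything through $*$ to Theorem~\ref{thm:supInfBaerrsys}. This is economical, but it has two costs. It forces a separate treatment of $\ils$, which you handle correctly: $E^{\ils}_b$ is a principal down-set and $B\neq\emptyset$. It also only relocates the infinite-closure step into a theorem whose proof in the paper is equally terse.

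Your final paragraph supplies exactly what both arguments need, and it is sound:
\begin{itemize}
\item If $I=\sum_i p_i\mathcal{A}$ satisfies $xI\subseteq I$, then ${}^\circ I\,x\subseteq {}^\circ I$. Hence $x({}^\circ I)^\circ\subseteq ({}^\circ I)^\circ=(\bigvee_i p_i)\mathcal{A}$.
\item The condition $pxp=px$ is the same as $px^*p=x^*p$, so it is covered by applying the same argument to $x^*$.
\item Meets are simpler than via complements: $(\bigwedge_i p_i)\mathcal{A}=\bigcap_i p_i\mathcal{A}$ is $x$-invariant whenever each $p_i\mathcal{A}$ is.
\end{itemize}
This closure argument uses only the Baer property, not regularity.
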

\begin{proof} Since both $\bigvee \{\lp(c): c\in B\}$ and $\bigwedge \{\lp(c): c\in B\}$ exist, the result follows by an argument analogous to that used in the proof of Theorem \ref{thm:latticesls}. 
\end{proof}

A generalization of Theorem \ref{thm:supeinfgeneral} can be proved from Theorem \ref{thm:supremoRegularBaerDowntset} and Theorem \ref{thm:supInfBaerrsys}. 

\begin{theorem} Let $\mathcal {A}$ be a regular Baer *-ring ordered by any $\stackrel{\alpha}{\preceq}$. If $b$ is an upper bound of a nonempty subset $B\subseteq \mathcal{A}$ then the least upper bound of $B$ and the greatest lower bound of $B$ exist $\mathcal{A}$. Moreover,
\[\bigvee \{c: c\in B\}=b\left( \bigvee \{\rp(c): c\in B\}\right)\]
and
\[ \bigwedge \{c: c\in B\}=b\left( \bigwedge \{\rp(c): c\in B\}\right)\]
whenever $\alpha\in \{ \crslc, \crsrc,\crsc, \cslc, \csrc,  \cscu \}$, while

\[\bigvee \{c: c\in B\}=\left( \bigvee \{\lp(c): c\in B\}\right) b \]
and
\[ \bigwedge \{c: c\in B\}=\left( \bigwedge \{\lp(c): c\in B\}\right) b \]
whenever $\alpha\in \{ \ils, \clslc, \clsrc, \clsc\}$.
\end{theorem}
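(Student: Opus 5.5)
The plan is to mimic the proof of Theorem \ref{thm:supeinfgeneral}. We replace the binary lattice operations on $E(\mathcal{A})$ by the arbitrary joins and meets, which exist because $E(\mathcal{A})$ is complete in a Baer *-ring.

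Fix $\alpha\in\{\crslc,\crsrc,\crsc,\cslc,\csrc,\cscu\}$ and put
\[
s=b\Bigl(\bigvee\{\rp(c):c\in B\}\Bigr),\qquad t=b\Bigl(\bigwedge\{\rp(c):c\in B\}\Bigr).
\]
By Theorem \ref{thm:supInfBaerrsys}, $s$ is the least upper bound and $t$ the greatest lower bound of $B$ inside the down-set $[0,b]^\alpha$.

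\textbf{Supremum.} We must show that $s$ is the least upper bound in all of $\mathcal{A}$. Let $d$ be any upper bound of $B$ with respect to $\stackrel{\alpha}{\preceq}$. The key step is to apply Theorem \ref{thm:supInfBaerrsys} again with $d$ in place of $b$. This gives that $d\bigl(\bigvee\{\rp(c):c\in B\}\bigr)$ is the supremum of $B$ in $[0,d]^\alpha$; in particular it lies below $d$.

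It remains to show that this element equals $s$. For each $c\in B$ we have $c\rs b$ and $c\rs d$ (via (\ref{rsconcuadradosimplicanrs}) and (\ref{sconcuadradosimplicans}), together with Remark \ref{estrellaifflaterales}). Hence $c=b\rp(c)=d\rp(c)$, so $b$ and $d$ agree on each $\rp(c)$. We then want $bp=dp$ for $p=\bigvee\rp(c)$.

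I expect this to be the main obstacle. In the binary case it was hidden inside C\={\i}rulis' results; here I would use annihilators. Set $e=\rp(b-d)$, the annihilator projection of $b-d$. Since $(b-d)\rp(c)=0$, we have $\rp(c)\in(b-d)^\circ=(1-e)\mathcal{A}$, so $\rp(c)\le 1-e$ for every $c\in B$. Taking the supremum gives $p\le 1-e$, and therefore $(b-d)p=(b-d)ep=0$, i.e. $bp=dp$.

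Thus $s=dp\stackrel{\alpha}{\preceq} d$ for every upper bound $d$, and since $s$ is itself an upper bound of $B$, it is the least upper bound in $\mathcal{A}$.

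\textbf{Infimum.} Let $c'$ be any lower bound of $B$ in $\mathcal{A}$. Pick any $c_0\in B$; then $c'\stackrel{\alpha}{\preceq} c_0\stackrel{\alpha}{\preceq} b$, so by transitivity $c'\in[0,b]^\alpha$. There $t$ is the greatest lower bound, so $c'\stackrel{\alpha}{\preceq} t$. Hence $t$ is the infimum of $B$ in $\mathcal{A}$.

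\textbf{Left cases.} For $\alpha\in\{\ils,\clslc,\clsrc,\clsc\}$, transport everything through the involution. By Remark \ref{lsrc lsls isomorfos lslc lsrc}, the map $*$ is an order-isomorphism onto the corresponding right order. Moreover, $\rp(c^*)=\lp(c)$ by Lemma \ref{Lemma:propiedadesdellpyrp}(\ref{prop:rpa*lpa}), and joins and meets of self-adjoint projections are self-adjoint. Adjoining the right-case formulas applied to $B^*$ and $b^*$ therefore yields $\bigl(\bigvee\{\lp(c):c\in B\}\bigr)b$ and $\bigl(\bigwedge\{\lp(c):c\in B\}\bigr)b$. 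The case $\ils$ corresponds to $\irs$, so it needs the same argument run for $\rs$; that is the same computation using Theorem \ref{thm:isodecrecienteRestrella} and Baer completeness.
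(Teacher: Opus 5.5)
Your proof is correct, but its key step differs from the route the paper indicates. The paper gives no separate argument for this theorem. It presents it as the Baer analogue of Theorem~\ref{thm:supeinfgeneral}, where the passage from $[0,b]^\alpha$ to all of $\mathcal{A}$ relies on C\={\i}rulis' theorems. Those theorems identify $b(\rp(a_1)\vee\rp(a_2))$ and $b(\rp(a_1)\wedge\rp(a_2))$ as the supremum and infimum for $\s$ (resp.\ $\rs$). For an arbitrary family $B$ this tacitly needs infinite versions of those external results. Moreover, as written, that template only compares the candidate with bounds comparable to it ($a_i\stackrel{\alpha}{\preceq} d\stackrel{\alpha}{\preceq} s$, resp.\ $t\stackrel{\alpha}{\preceq} c\stackrel{\alpha}{\preceq} a_i$). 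So leastness still requires applying C\={\i}rulis' result with the other bound $d$ and using uniqueness of the $\s$-supremum.

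Your annihilator computation avoids this. From $(b-d)\rp(c)=0$ you get $\rp(c)\le 1-\rp(b-d)$ for all $c\in B$, hence $\bigvee\{\rp(c):c\in B\}\le 1-\rp(b-d)$. This proves directly that the candidate $b\bigl(\bigvee\{\rp(c):c\in B\}\bigr)$ does not depend on the chosen upper bound, which is exactly what C\={\i}rulis' theorem supplies in the binary case. Your transitivity remark (every lower bound of the nonempty set $B$ already lies in $[0,b]^\alpha$) settles the infimum with no outside input.

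Thus your version is self-contained: it uses only Theorem~\ref{thm:supInfBaerrsys}, the Rickart projection of $b-d$, and completeness of $E(\mathcal{A})$. It also explicitly handles upper bounds incomparable with the candidate. The paper's route is shorter only if the infinite-family forms of C\={\i}rulis' results are taken for granted.

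Your transfer of the left-hand cases through the involution (with $\ils$ treated via $\rs$) is fine. Running the same argument with $\lp$ and left annihilators, together with Theorem~\ref{thm:supremoRegularBaerDowntset}, would give them directly as well.
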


\section*{Funding} 
The authors were partially supported by projects PGI 24/L128 and PGI 24/ZL22, Departamento de Matemática, Universidad Nacional del Sur (UNS), Argentina.

\end{document}